\newtheorem{theorem}{Theorem}[section] 
\newtheorem{lemma}[theorem]{Lemma} 
\newtheorem{corollary}[theorem]{Corollary} 
\newtheorem{proposition}[theorem]{Proposition}
\theoremstyle{definition}
\newtheorem*{ack}{Acknowledgments}
\newtheorem{definition}[theorem]{Definition} 
\newtheorem{assump}[theorem]{Assumptions}
\newtheorem{remark}[theorem]{Remark}
\title[]{Lie conformal superalgebras and duality of modules over linearly compact
Lie superalgebras}
\DeclareMathOperator{\str}{str}
\DeclareMathOperator{\ad}{ad}
\DeclareMathOperator{\diver}{div}
\DeclareMathOperator{\tr}{tr}
\author{Nicoletta Cantarini}\author{Fabrizio Caselli}\author{Victor Kac}
\subjclass[2010]{08A05, 17B05 (primary), 17B65, 17B70 (secondary)}
\keywords{Linearly compact Lie superalgebra, Lie conformal superalgebra, annihilation superalgebra, formal distribution, formal Fourier transform, generalized Verma module, shift character, duality.}
\begin{document}
\maketitle
\thispagestyle{empty}
\newcommand{\inlinewedge}{\textrm{\raisebox{0.6mm}{\footnotesize $\bigwedge$}}}
\newcommand{\displaywedge}{\textrm{\raisebox{0.6mm}{\tiny $\bigwedge$}}}

\def\C{{\mathbb F}}
\def\R{{\mathbb R}}
\def\Z{{\mathbb Z}}
\def\la{\textrm{{\boldmath $\lambda$}}}
\def\deb{\textrm{{\boldmath $\de$}}}
\def\mub{\textrm{{\boldmath $\mu$}}}
\def\nub{\textrm{{\boldmath $\nu$}}}
\def\yb{\textrm{{\boldmath $y$}}}
\def\xb{\textrm{{\boldmath $x$}}}
\def\zb{\textrm{{\boldmath $z$}}}
\def\wb{\textrm{{\boldmath $w$}}}
\def\xib{\textrm{{\boldmath $\xi$ \hspace{-1mm}}}}
\def\de{\partial}
\def\sl{\mathfrak{sl}}

\begin{abstract}
We construct a duality functor on the category of continuous representations of linearly compact Lie superalgebras, using representation theory of Lie conformal superalgebras. We compute the dual representations of the generalized Verma modules.
\end{abstract}
\section{Introduction}
Lie conformal superalgebras encode the singular part of the operator product expansion (OPE) of
chiral fields in the vacuum sector of conformal field theory \cite{K1}:
\begin{equation}
a(z)b(w)=\sum_{j\in\Z}\frac{a(w)_{(j)}b(w)}{(z-w)^{j+1}}.
\label{I1}
\end{equation}
They play an important role in the theory of vertex algebras that encode the full OPE
\eqref{I1}, so that the full structure of a vertex algebra is captured by the $\lambda$-bracket of the Lie
conformal (super)algebra structure:
\begin{equation}
[a(w)_\lambda b(w)]=\sum_{j\geq 0}(a(w)_{(j)}b(w))\frac{\lambda^j}{j!},
\label{I2}
\end{equation}
and the normally ordered product $a(w)_{(-1)}b(w)$ (since $a(w)_{(-n-1)}b(w)=
\frac{1}{n!}(\de_w^na(w)_{(-1)}b(w))$). In the classical limit the normally ordered product of a vertex algebra becomes commutative, but the $\lambda$-bracket still satisfies the axioms of a Lie conformal superalgebra. This leads to the theory of Poisson vertex algebras that play a fundamental role in the theory of Hamiltonian PDE.

Recall that $\lambda$-bracket \eqref{I2} satisfies the following axioms, where $a=a(w)$,
$\de a=\de_wa(w)$:
\begin{itemize}
\item[] (sesquilinearity)\,\,\, $[\de a_{\lambda}b]=-\lambda[a_\lambda b]$, $[a_{\lambda}\de b]=(\de +\lambda)[a_\lambda b]$,
\item[]  (skewsymmetry)\,\, $[b_{\lambda}a]=-(-1)^{p(a)p(b)}[a_{-\lambda-\de} b]$,
\item[] (Jacobi identity)\, $[a_{\lambda}[b_\mu c]]=[[a_\lambda b]_{\lambda+\mu}c]+(-1)^{p(a)p(b)}[b_\mu[a_\lambda c]]$.
\end{itemize}
An $\C[\de]$-module $R$, endowed with a $\lambda$-bracket $R\otimes R\rightarrow R[\lambda]$,
satisfying the above three axioms, is called a {\it Lie conformal superalgebra}. Here and in the sequel we denote by $\C$ an algebraically closed field of characteristic 0.

All the work on representation theory of Lie conformal superalgebras $R$ was based on the simple observation that representations of $R$ are closely related to ``continuous'' representations
of the associated to $R$ annihilation Lie superalgebra. Recall that the annihilation Lie
superalgebra is the vector superspace
\begin{equation}
\mathcal A(R)=R[[t]]/(\de+\de_t)R[[t]]
\label{I3}
\end{equation}
where $t$ has even parity, with the (well-defined) continuous bracket
\begin{equation}
[at^m,bt^n]=\sum_{j\geq 0}{{m}\choose{j}}(a_{(j)}b)t^{m+n-j},
\label{I4}
\end{equation}
which makes it a linearly compact Lie superalgebra.
Since $\de$ commutes with $\de_t$, it extends in a natural way to a derivation $\de$ of the Lie
superalgebra $\mathcal A(R)$, hence one can define the extended annihilation Lie superalgebra $\mathcal A^e(R)=\C[\de]\ltimes
\mathcal A(R)$. It is easy to see that a ``conformal'' $R$-module $M$ is the same as a continuous  
$\mathcal A^e(R)$-module \cite{CK1}.
In most of the examples the derivation $\de$ is an inner derivation of $\mathcal A(R)$: $\de=\ad a$, 
$a\in \mathcal A(R)$, so that $\mathcal A(R)=\mathcal A^e(R)/(\de-\ad\, a)\mathcal A^e(R)$.
An $R$-module ($=\mathcal A^e(R)$-module) $M$ is called {\it coherent} if $(\de-a)M=0$. Thus a continuous
$\mathcal A(R)$-module is the same as a coherent $R$-module.

Note that if a Lie conformal superalgebra $R$ is a finitely generated $\C[\de]$-module, then $\mathcal A(R)$ is a linearly compact Lie superalgebra, hence representation theory of Lie conformal superalgebras is intimately related to 
the theory of continuous representations of linearly compact Lie superalgebras.

Although it is unclear what is a right definition of a vertex algebra in several indeterminates,
the definition of a Lie conformal superalgebra and all the above remarks can be easily extended to the
case when one even indeterminate $\lambda$ is replaced by several even commuting indeterminates
$\lambda_1, \dots, \lambda_r$. In the paper we allow also for $s$ odd indeterminates $\lambda_{r+1}, \dots, \lambda_{r+s}$ and we say that the corresponding
Lie conformal superalgebra is of type $(r,s)$, but for the sake of simplicity this will not be discussed in the introduction. In the present paper we reverse the point of view: instead of using continuous representations of linearly compact Lie superalgebras in the study
of finitely generated $\C[\de]$-modules over Lie conformal superalgebras, we use the latter
to study the former. But then a natural question arises: which linearly compact Lie superalgebras are annihilation algebras of Lie conformal superalgebras? The answer probably is: in all interesting examples. Namely, if
a linearly compact Lie superalgebra is of geometric origin, i.e., it is constructed with the use of vector 
fields and differential forms in a formal neighborhood of a point in an $(r|s)$-dimensional
supermanifold, then it is an annihilation superalgebra of a finitely generated as an $\C[\de_1, \dots, \de_r]$-module
Lie conformal superalgebra in $r$ indeterminates.

Let us demonstrate this on the example of the Lie algebra $W(r)$ of continuous derivations of the algebra of formal power series $\C[[t_1,\dots, t_r]]$. Include the Lie algebra $W(r)$ in a larger
Lie algebra $\widetilde{W}(r)$ of continuous derivations of the algebra of formal Laurent series 
$\C[[t_1,\dots, t_r]][t_1^{-1}, \dots, t_r^{-1}]$. Consider the $\widetilde{W}(r)$-valued formal distributions
$$a_i(\zb)=-\sum_{k_1,\dots, k_r\in\Z^r}(t_1^{k_1}\dots t_r^{k_r}\de_{t_i})z_1^{-k_1-1}\dots z_r^{-k_r-1}.$$
It is immediate to see, using the standard properties of the formal
$\delta$-function $\delta(z-w)=\sum_{n\in\Z}z^nw^{n-1}$, that
$$[a_i(\zb), a_j(\wb)]=\de_{w_i}a_j(\wb)\delta(\zb-\wb)+a_i(\wb)\de_{w_j}\delta(\zb-\wb)+
a_j(\wb)\de_{w_i}\delta(\zb-\wb),$$
where $\zb=(z_1,\dots, z_r)$, $\wb=(w_1,\dots, w_r)$, and  $\delta(\zb-\wb)=\prod_{i=1}^r\delta(z_i-w_i)$.
Applying the formal Fourier transform, i.e., letting
$$[a_\la b]=Res_{\zb}[a(\zb), b(\wb)]e^{\sum_i\lambda_iz_i},$$
we obtain a structure of a Lie conformal algebra, which we denote by $RW(r)$, on the free
$\C[\de_{z_1}, \dots, \de_{z_r}]$-module of rank $r$, generated by the elements $a_i=a_i(\zb)$, with the
following $\la$-brackets:
$$[{a_i}_\la a_j]=\de_ia_j+\lambda_ja_i+\lambda_ia_j,~~i,j=1,\dots,r.$$
It is easy to see that the linearly compact Lie algebra $W(r)$ is the annihilation algebra of the Lie conformal algebra $RW(r)$.

A remarkable feature of representation theory of a Lie conformal superalgebra $R$ is the existence of
a contravariant {\it duality functor} on the category of $R$-modules which are finitely generated
as $\C[\de]$-modules \cite{K1, BDAK, BKLR}. Extension of this construction to the case of a Lie conformal 
superalgebra $R$ in several indeterminates is straightforward. Due to the above remarks,
this duality functor can be transported to the category of continuous representations of the linearly compact Lie superalgebra, which is the annihilation algebra of $R$.

In the present paper we study the duality functor for the category $\mathcal P$ of continuous
$\Z$-graded modules with discrete topology over $\Z$-graded linearly compact Lie superalgebras
${\mathfrak g}=\prod_{j\in\Z_{\geq -d}}{\mathfrak g}_j$, where the depth $d\geq 1$, $\dim {\mathfrak g}_j<\infty$,
and $[{\mathfrak g}_i,{\mathfrak g}_j]\subset {\mathfrak g}_{i+j}$. We have: ${\mathfrak g}={\mathfrak g}_{<0}+{\mathfrak g}_{\geq 0}$ where 
${\mathfrak g}_{<0}=\bigoplus_{j<0}{\mathfrak g}_j$ and ${\mathfrak g}_{\geq 0}=\prod_{j\geq 0}{\mathfrak g}_j$ . We also require that modules from $\mathcal P$ are finitely generated as
${\mathcal U}({\mathfrak g}_-)$-modules. Recall that a $\mathfrak g$-module $M$ is continuous if, for any $v\in M$, $\mathfrak g_n v=0$ for $n$ 
sufficiently large.

The category $\mathcal P$ is similar to the BGG category $\mathcal O$, and, as in category
$\mathcal O$, the most important objects in $\mathcal P$ are {\it{generalized Verma modules}} $M(F)$
(see, e.g.\ \cite{KR1}). Recall, that, given a finite-dimensional ${\mathfrak g}_0$-module $F$, one extends it trivially
to the subalgebra ${\mathfrak g}_{>0}=\prod_{j> 0}{\mathfrak g}_j\subset {\mathfrak g}$, 
and defines 
$$M(F)=\textrm{Ind}_{{\mathfrak g}_{\geq 0}}^{{\mathfrak g}} F.$$
Our main result is the computation of the dual to $M(F)$ ${\mathfrak g}$-module $M(F)^*$ (see Theorems 
\ref{dualmodule78} and \ref{main}). It turns out that $M(F)^*$ is not $M(F^*)$, but $M(F^\vee)$,
where $F^\vee$ is a shifted ${\mathfrak g}_0$-module $F^*$ by the following character (=1-dimensional representation) $\chi$ 
of ${\mathfrak g}_0$:
\begin{equation}
\label{beautifulShift}
\chi(a)=\str(\ad a|_{\mathfrak{g}_{<0}}).
\end{equation}
 In particular, if ${\mathfrak g}_0=[{\mathfrak g}_0,{\mathfrak g}_0]$, then
$\chi=0$, and $M(F)^*=M(F^*)$, which happens, for example,
for the principally graded exceptional Lie superalgebra $E(5,10)$ \cite{K}. This observation has been
made in \cite{CC}, which led us to the present paper.

One of the main problems of representation theory of a linearly compact Lie superalgebra $\mathfrak g$ is the classification
of {\it degenerate} (i.e.\ non-irreducible) generalized Verma modules $M(F)$, associated to
finite-dimensional irreducible ${\mathfrak g}_0$-modules $F$. Since the topological dual of $M(F)$ endowed with discrete
topology is a linearly compact $\mathfrak g$-module, a solution of the above problem is important for the description of irreducible linearly compact $\mathfrak g$-modules.

In order to apply these results to representation theory of simple, finite rank Lie conformal superalgebras of type
$(r,s)$, one needs to develop representation theory of the corresponding annihilation algebra,
which apart from the ``current'' case, is a central extension of an infinite-dimensional simple linearly compact Lie superalgebra.

The simple infinite-dimensional linearly compact Lie superalgebras were classified and explicitly described, along with their maximal open subalgebras, in \cite{K,CK2, CaK}. It was shown in 
\cite{CK} and \cite{FK} that all linearly compact simple Lie superalgebras of growth 1 (rather their universal central extensions) are annihilation superalgebras of simple Lie conformal superalgebras of type $(1,0)$.
Using them, all finite rank simple Lie conformal superalgebras of type $(1,0)$ were classified in \cite{FK}. A complete
list of those, admitting a non-trivial $\Z$-gradation, consists of three series $W_N$, $S_N$, and $K_N$, and two exceptions: 
$K'_4$ and $CK_6$.

Representation theory of $W_N$ and $S_N$ was constructed in \cite{BKLR}, of $K_N$ with $N=0,1$ in \cite{CK1},
resp.\ with $N=2,3,4$ in \cite{CL}, resp.\ with all $N\geq 0$ in \cite{BKL}.
Representation theory of $CK_6$ and $K'_4$ was constructed in \cite{BKL1} and \cite{B}, respectively. A very interesting feature
of these works is that all degenerate modules are members of infinite complexes, for classical (resp.\ exceptional)
Lie conformal superalgebras the number of these complexes being one or two (resp.\ infinite).

A complete representation theory of linearly compact Lie superalgebras, corresponding to 
simple Lie conformal superalgebras of type $(r,s)$ with $r>1$ is known only for Cartan type Lie algebras, beginning with the
paper \cite{R0}, and for the exceptional Lie superalgebra $E(3,6)$ \cite{KR1, KR2, KR3}. Some partial results
in other cases are obtained in \cite{KR} and \cite{R}. Note that again for all known examples the degenerate modules
can be organized in infinite complexes, the number of them being finite (resp.\ infinite) in the classical
(resp.\ exceptional) cases. We hope that the duality, established in the present paper, and the Lie conformal superalgebra approach will help to make
progress in representation theory in the remaining cases, especially $E(5,10)$.

The contents of the paper are as follows. After the introduction we discuss in Section \ref{section2} the notion of Lie
conformal superalgebra of type $(r,s)$, its annihilation Lie superalgebra, and elements of their representation theory.
In Section \ref{two} we introduce the duality functor in the category of finitely-generated modules over a Lie
conformal superalgebra of type $(r,s)$ and of the corresponding annihilation Lie superalgebra. We prove here the main Theorem
\ref{main} under Assumptions \ref{assumption}. We conjecture that
if $\mathfrak{g}$ is a 
linearly compact Lie superalgebra then for any transitive pair $(\mathfrak{g}, \mathfrak{g}_{\geq 0})$, i.e., such that $\mathfrak{g}_{\geq 0}$ is an open subalgebra of
$\mathfrak g$ containing no non-trivial ideal of $\mathfrak g$,  one can construct a duality functor for which Theorem \ref{main} still holds with
$\chi(a)=\str(\ad a|_{\mathfrak{g}/\mathfrak{g}_{\geq 0}})$ for $a\in \mathfrak{g}_{\geq 0}$.
In the remaining Sections \ref{section4}--\ref{section8} we show that the linearly compact Lie superalgebras
$\mathfrak{g}=W(r,s)$, $K(1,n)$, $E(5,10)$, $E(3,6)$, and $E(3,8)$ are annihilation Lie superalgebras
of certain Lie conformal superalgebras $R\mathfrak{g}$ of type $(r,s)$ (for suitable $r$ and $s$) which we describe explicitely. We check
that in all these cases Assumptions \ref{assumption} on $\mathfrak{g}$ with its principal gradation are satisfied. We also check in Section \ref{section4} that for all
annihilation superalgebras, associated to the ordinary Lie conformal superalgebras (i.e., of type $(1,0)$)
Theorem \ref{main} is applicable as well. Unfortunately we do not know whether this is the case for the remaining
exceptional simple Lie superalgebra $\mathfrak{g}=E(4,4)$, though it is not difficult to construct the corresponding $R\mathfrak{g}$. 
\begin{ack} The first two authors were partially supported by PRIN 2015: ``Moduli spaces and Lie theory''. The third author was partially supported by the Bert and Ann Kostant fund, and the Simons fellowship. 
\end{ack}
\section{Lie conformal superalgebras of type $(r,s)$}\label{section2}
Let $\Z_+=\{0,1,2,\ldots\}$. Fix once and for all two non-negative integers $r$ and $s$.
We will use several sets of $r+s$ variables such as $\lambda_1,\ldots,\lambda_{r+s}$, $\de_1,\ldots,\de_{r+s}$, $y_1,\ldots,y_{r+s}$. We will always assume that variables with indices $1,\ldots,r$ are even and variables with indices $r+1,\ldots,r+s$ are odd, and accordingly we let $p_i=0$ if $i=1,\ldots,r$ and $p_i=1$ if $i=r+1,\ldots,r+s$. We will also use bold letters such as $\la$ or $\deb$ or $\yb$ to denote the set of corresponding variables.
We denote by $\inlinewedge[\la]=\C[\lambda_1,\ldots,\lambda_r]\otimes \inlinewedge(\lambda_{r+1},\ldots,\lambda_{r+s})$ and we similarly define $\inlinewedge[\deb]$ or $\inlinewedge[\yb]$. The completion
$\inlinewedge[[\yb]]$ of $\inlinewedge[\yb]$ is the algebra of formal power series in $\yb$.

If $R$ is a $\mathbb Z/2\mathbb{Z}$-graded vector space we give to $\inlinewedge [\la]\otimes R$ the structure of a $\mathbb Z/2\mathbb{Z}$-graded $\inlinewedge [\la]$-bimodule by letting $\lambda_i(P(\la)\otimes a)=\lambda_iP(\la)\otimes a$ and $(P(\la)\otimes a)\lambda_i=(-1)^{p_ip(a)} P(\la)\lambda_i\otimes a$, where $p(a)\in \mathbb Z/2\mathbb{Z}$ denotes the parity of $a$. We will usually drop the tensor product symbol and simply write $P(\la)a$ instead of $P(\la)\otimes a$.

\begin{definition}\label{defsupconf}
	A {\it{Lie conformal superalgebra of type}} $(r,s)$ is a $\mathbb Z/2\Z$ graded $\inlinewedge[\deb]$-bimodule $R$ such that $a\de_i=(-1)^{p_ip(a)}\de_ia$ for all $a\in R$ and $i\in \{1,\ldots,r+s\}$, endowed with a $\la$-bracket, i.e. a $\mathbb Z/2\Z$-graded linear map $R\otimes R\rightarrow \inlinewedge[\la]\otimes R$, denoted by $a\otimes b\mapsto [a_\la b]$, that satisfies the following properties:
	
		\begin{align}& [(\de_i a)_\la b]=-\lambda_i[a_\la b];& \label{defsupconf1}\\
		&[a_\la (b\de_i)])=[a_\la b](\de_i+\lambda_i);&\label{defsupconf2}\\
		&[b_\la a]=-(-1)^{p(a)p(b)}[a_{-\la-\deb}b];&\label{defsupconf3}\\
		 &[a_\la [b_\mub c]] = [[a_\la b]_{\la+\mub}c]+(-1)^{p(a)p(b)}[b_\mub[a_\la c]].&\label{defsupconf4}
		 \end{align}
	 	
\end{definition}
We refer to Properties \eqref{defsupconf1} and \eqref{defsupconf2}  as  the conformal sesquilinearity, Property \eqref{defsupconf3} as the conformal skew-symmetry and
Property \eqref{defsupconf4} as the conformal Jacobi identity.

We note that the notion of a Lie conformal superalgebra, as treated in \cite{K1}, corresponds to a
Lie conformal superalgebra of type $(1,0)$. 
For the convenience of the reader we first briefly present the theory of Lie conformal superalgebras of type $(r,0)$: 
in this case all the results are straightforward extensions of those in type $(1,0)$ and therefore are stated
without proofs. We then develop the general theory in type $(r,s)$.

If $K=(k_1,\ldots,k_r)$ is any $r$-tuple of non-negative integers we let
$$\la^K=\lambda_1^{k_1}\lambda_2^{k_2}\cdots \lambda_r^{k_r} \,\,{\mbox{and}}\,\,
K!=k_1!\cdots k_r!.$$

For $a,b\in R$ the $K$-products $(a_Kb)$ are defined by the polynomial expansion  
\begin{equation}
[a_\la b]=\sum_{K\in \Z_{+}^r}\frac {\la^K}{K!}(a_Kb).
\label{5a}\end{equation}

Starting from a Lie conformal superalgebra $R$ of type $(r,0)$, one can construct a new
Lie conformal superalgebra $\tilde R$ of the same type, called the affinization of $R$. Let $\tilde R=R\otimes\C[[\yb]]$. We consider $\tilde R$ as a $\C[\deb_\yb]=\C[\de_{y_1},\ldots,\de_{y_{r}}]$-module, with $\la$-bracket given by
\begin{equation}\label{lambdatildepari}
[(a\yb^M)_\la (b\yb^N)]=\big([a_{\la+\deb_{\yb}}b]\yb^M\big)\yb^N.
\end{equation}
Note that in this expression it is meant that the derivatives with respect to the variables $y_i$
in the bracket $[a_{\la+\deb_{\yb}}b]$ 
act only on $\yb^M$.
The corresponding $K$-products are:
\begin{equation}\label{Kproductspari}
((a\yb^M)_K(b\yb\,\, ^N))=\sum_{J\in\Z_+^r}\frac{1}{J!}{(a_{K+J}b)((\deb _{\yb}})^J\yb ^M)\yb ^N.
\end{equation}
The $\la$-bracket \eqref{lambdatildepari} defines on $\tilde R$ a Lie conformal superalgebra structure with $\tilde \deb= \deb+\deb_\yb$.

	 The annihilation algebra associated with $R$ is the Lie superalgebra
	 \[
	  \mathcal A(R)=\tilde R /\tilde \deb \tilde R
	 \]
	 with the bracket given by
	 \[
	  [a\yb^M, b\yb^N]=\sum_{J}\frac{1}{J!}{(a_{J}b)((\deb _{\yb}})^J\yb ^M)\yb^N=[({a\yb^M})_{\lambda}(b\yb^N)]_{|\la=\bf 0}.
	 \]

The  representation theory of a Lie conformal superalgebra is closely related to the representation theory of the corresponding annihilation algebra. This fact relies on the following relation
\begin{equation}\label{fundeven}[a_{\la},b_{\mub}]=[a_{\la}b]_{\la+\mub}, ~~a,b\in R,
\end{equation}
where 
$$a_{\la}=\sum_{K\in \Z_+^r}\frac{\la^{K}}{K!} a\yb^K \in \mathcal{A}(R)[[\la]].$$

The goal of this section is to extend these results to Lie conformal superalgebras of type $(r,s)$.
In this context, in order to simplify computations involving signs, it is more convenient to use
expansion \eqref{7a} below instead of \eqref{5a}. For this, introduce 
the following notation. 
If $K=(k_1,\ldots,k_t)$ is any sequence with entries in $\{1,\ldots,r+s\}$ we let
\[m_i(K)=|\{j\in\{1,\ldots,t\}:\, k_j=i\}|,
\]

\[f(K)=\prod_i m_i(K)!,
\]
and
\[\la_K=\lambda_{k_1}\lambda_{k_2}\cdots \lambda_{k_t}
\]
and we similarly define $\yb_K$, $\xb_K$ and so on. 
If $K=\emptyset$, we let $f(K)=1$, $\lambda_K=1$.
We also let $p_K=p_{k_1}+\cdots +p_{k_t}$ and so $p(\la_K)=p_K$.
For example, if $r=2$, $s=3$ and $K=(2,3,2,1,5,4)$, then $f(K)=2$, $\la_K=-\lambda_1\lambda_2^2\lambda_3\lambda_4\lambda_5$ and $p_K=1$.
It is clear that if $J$ and $K$ are obtained from each other by a permutation of the entries we have $\la_J=\pm \la_K$; we write in this case $J\sim K$ and we denote by $S_{r,s}$ any set of representatives of these equivalent classes.  
For $a,b\in R$ the $K$-products $(a_Kb)$ are uniquely determined by  the following conditions:
\begin{itemize}
\item[-] $(a_Kb)=0$ if $\la_K=0$ (i.e. if $K$ contains a repeated odd index);
\item[-] $\la_J(a_Jb)=\la_K (a_Kb)$ for $J\sim K$;
\end{itemize}
\begin{equation} 
[a_\la b]=\sum_{K\in S_{r,s}}\frac {\la_K}{f(K)}(a_Kb).\label{7a}
\end{equation}

\begin{remark} Conditions \eqref{defsupconf1} and \eqref{defsupconf2} in Definition \ref{defsupconf} can be restated in terms of $K$-products by means of the following equations, where, for $K=(k_1,\ldots,k_t)$
and $i\in\{1,\dots, r+s\}$, we let $iK=(i,k_1,\ldots,k_t)$:
	\begin{align}
		 &((\de_ia)_Kb)=0 \textrm{ if }m_i(K)=0 \textrm{ and }((\de_i a)_{iK}b)=-(m_i(K)+1)(a_Kb)
		\,{};& \label{Kprod1}\\
		 &(a_K(b\de_i))=(a_Kb)\de_i\textrm{ if }m_i(K)=0 \textrm{ and }  \label{Kprod2} \\&(a_{iK}(b\de_i))=(a_{iK}b)\de_i+(m_i(K)+1)(-1)^{(p(a)+p(b))p_i}(a_Kb) \,{}.& \nonumber
	\end{align}
\end{remark}
As in the completely even case, starting from a Lie conformal superalgebra $R$ of type $(r,s)$ one can construct a new Lie conformal superalgebra $\tilde R$ of the same type, called the {\it affinization} of $R$. Let 
$\tilde R=R\otimes \inlinewedge[[\yb]]$. We consider $\tilde R$ as a $\inlinewedge[[\yb]]$-bimodule and also as a $\inlinewedge[\deb_\yb]=\inlinewedge[\de_{y_1},\ldots,\de_{y_{r+s}}]$-bimodule letting
\[
 \de_{y_i} a\yb_M=(-1)^{p_ip(a)}a(\de_{y_i} \yb_M)=(-1)^{p_i(p(a)+p_M)}a\yb_M \de_{y_i}
\]
with $\la$-bracket given by
\begin{equation}\label{lambdatilde}
[(\yb_M a)_\la (b \yb_N)]=\big(\yb_M[a_{\la+\deb_{\yb}}b]\big)\yb_N.
\end{equation}
The corresponding $K$-products are:
\begin{equation}\label{Kproducts}
((\yb_M a)_K(b\yb _N))=(-1)^{p_Kp_M}\sum_{J\in S_{r,s}}\frac{1}{f(J)}(\yb _M{(\deb _{\yb}})_J)(a_{KJ}b)\yb _N,
\end{equation}
where for $K=(k_1,\ldots,k_t)$ and $J=(j_1,\ldots,j_u)$ we let $KJ=(k_1,\ldots,k_t,j_1,\ldots,j_u)$.
\begin{proposition} The $\inlinewedge(\tilde{\deb})$-module $\tilde R$ with $\tilde \deb= \deb+\deb_\yb $ and $\la$-bracket given by \eqref{lambdatilde} is a Lie conformal superalgebra.	
\end{proposition}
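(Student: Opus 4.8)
The plan is to verify directly the four defining properties of Definition~\ref{defsupconf} for $\tilde R$, equipped with $\tilde\deb=\deb+\deb_\yb$ and the $\la$-bracket \eqref{lambdatilde}. Before doing so I would record two preliminary facts. First, $\tilde R$ is indeed a $\inlinewedge[\tilde\deb]$-bimodule satisfying $a\tilde\de_i=(-1)^{p_ip(a)}\tilde\de_i a$: this is inherited from the corresponding relations for $\de_i$ on the $R$-factor and for $\de_{y_i}$ on $\inlinewedge[[\yb]]$, since the two families of derivations act on different tensor factors. Second, the right-hand side of \eqref{lambdatilde} genuinely lands in $\inlinewedge[\la]\otimes\tilde R$: the bracket $[a_{\la+\deb_\yb}b]$ is a finite sum, polynomial in the $\la$'s and in the $\deb_\yb$'s by \eqref{7a}, so applying it to $\yb_M\in\inlinewedge[[\yb]]$ produces a well-defined element. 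Throughout, the one new difficulty relative to the completely even case treated above is the systematic bookkeeping of Koszul signs, governed by the parities $p_K$, $p_M$, $p_N$ and $p(a)$, $p(b)$, $p(c)$.

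For conformal sesquilinearity I would use $\tilde\de_i=\de_i+\de_{y_i}$ and treat the two summands separately. Acting with $\de_i$ on the left argument and invoking \eqref{defsupconf1} for $R$ with $\la$ replaced by $\la+\deb_\yb$ produces a factor $-(\la_i+\de_{y_i})$, where the $\de_{y_i}$ now differentiates $\yb_M$; acting instead with $\de_{y_i}$ produces, by the Leibniz rule, exactly the compensating term in which $\de_{y_i}$ differentiates $\yb_M$. These cancel, leaving $-\la_i[(\yb_M a)_\la(b\yb_N)]$, which is \eqref{defsupconf1}; property \eqref{defsupconf2} is obtained symmetrically from \eqref{defsupconf2} for $R$, the compensating $\de_{y_i}$ now landing on $\yb_N$. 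For conformal skew-symmetry I would start from the expression $-(-1)^{p(\yb_M a)p(b\yb_N)}[(\yb_M a)_{-\la-\tilde\deb}(b\yb_N)]$, expand it via \eqref{lambdatilde}, apply the skew-symmetry \eqref{defsupconf3} of $R$ to the inner bracket, and show it agrees with $[(b\yb_N)_\la(\yb_M a)]$ computed directly. The substitution $-\la-\tilde\deb=-\la-\deb-\deb_\yb$ splits so that $\deb$ feeds the skew-symmetry of $R$ while the remaining $\deb_\yb$ is transferred from $\yb_M$ to $\yb_N$ by an integration-by-parts that uses precisely $\tilde\deb=\deb+\deb_\yb$; tracking the signs produced by moving $\yb_M$ past $\yb_N$ in the supercommutative algebra $\inlinewedge[[\yb]]$ reproduces the sign $(-1)^{p(\yb_M a)p(b\yb_N)}$.

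The main obstacle is the conformal Jacobi identity \eqref{defsupconf4}, which I would verify on elements $\yb_L a$, $\yb_M b$, $c\yb_N$. Expanding both sides by two applications of \eqref{lambdatilde}, the purely $R$-valued core is handled by the Jacobi identity \eqref{defsupconf4} for $R$, after the outer and inner spectral parameters are shifted to $\la+\deb_\yb$ and $\mub+\deb_\yb$ respectively. The delicate point is combinatorial rather than algebraic: one must check that the iterated operators $\deb_\yb$ coming from the two nested brackets land on the correct monomials among $\yb_L$, $\yb_M$, $\yb_N$ with matching multiplicities and signs in all three terms of the identity. Concretely, the substitution $\mub\mapsto\mub+\deb_\yb$ inside $[a_\la[b_\mub c]]$ must be shown compatible with the substitution $(\la+\mub)\mapsto(\la+\mub)+\deb_\yb$ appearing in $[[a_\la b]_{\la+\mub}c]$, which is a Leibniz-type compatibility expressing that differentiating a product of $\yb$-monomials equals the sum of the terms differentiating each factor. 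To keep the signs transparent I would either pass to the $K$-product form \eqref{Kproducts}, reducing everything to the identities recorded in the Remark, or linearize the shift by introducing the formal fields $a_\la=\sum_K\frac{\la_K}{f(K)}a\yb_K$, under which the substitution $\la\mapsto\la+\deb_\yb$ becomes multiplication of generating series and the required compatibility is manifest. Once this matching is established, the three sign factors align and \eqref{defsupconf4} follows.
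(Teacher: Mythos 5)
Your proposal is correct and takes essentially the same route as the paper: direct verification of the axioms of Definition \ref{defsupconf}, with sesquilinearity following from the cancellation between the $-(\lambda_i+\de_{y_i})$ factor produced by \eqref{defsupconf1} in $R$ at the shifted parameter $\la+\deb_\yb$ and the Leibniz term in which $\de_{y_i}$ differentiates $\yb_M$, skew-symmetry by applying \eqref{defsupconf3} in $R$ and tracking the Koszul signs together with the Leibniz rule, and the conformal Jacobi identity by routine expansion (which the paper itself leaves to the reader). One minor caveat: your generating-series shortcut quotes the even-case fields $a_\la=\sum_K\frac{\la_K}{f(K)}a\yb_K$, whereas in type $(r,s)$ the correct fields carry signs and reversed sequences, $a_\la=\sum_K(-1)^{p_K}\frac{\la_{\bar K}}{f(K)}\yb_K a$ as in Proposition \ref{lemmino} --- which is moreover established only after this proposition --- so your primary plan of direct expansion via \eqref{lambdatilde} (or the $K$-product form \eqref{Kproducts}) is the one to carry out.
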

\begin{proof}We first check condition \eqref{defsupconf1} in Definition \ref{defsupconf}. 
	\begin{align*}
	[(\tilde \de_i\yb _Ma)_\la b\yb _N]&=[((\de_i+\de_{y_i})\yb_M a)_\la b\yb _N]\\
	&= (-1)^{p_ip_M}[(\yb_M \de_i a)_\la b\yb _N]+(\de_{y_i}\yb _M[a_{\la+\deb_{\yb }}b])\yb_N\\
	&= (-1)^{p_ip_M}(\yb _M[\de_i a_{\la+\deb_\yb }b])\yb _N+(\de_{y_i}\yb _M[a_{\la+\deb_\yb }b])\yb_N \\
	&=-(-1)^{p_ip_M}(\yb _M(\lambda_i+\de_{y_i})[a_{\la+\deb_\yb }b])\yb_N+(\de_{y_i}\yb _M[a_{\la+\deb_\yb }b])\yb_N\\
	&= -\lambda_i (\yb_M[a_{\la+\deb_\yb} b])\yb_N\\
	&=-\lambda_i [(\yb_M a)_{\la} (b \yb_N)]
	\end{align*}
	Similarly one can check condition \eqref{defsupconf2}.
	Now we verify the conformal skew-symmetry, i.e.
	\[
	[(\yb _M a) _\la (b\yb _N)]=-(-1)^{(p(a)+p_M)(p(b)+p_N)}[(b\yb_N)_{-\la-\deb-\deb_\yb }(\yb_Ma)].\]
	We have
	\begin{align*}
	[(\yb _M a) _\la (b\yb _N)]&=(\yb_M[a_{\la+\deb_{\yb}}b])\yb_N\\
	&=-(-1)^{p(a)p(b)}\big(\yb_M[b_{-\la-\deb_{\yb}-\deb}a]\big)\yb_N\\
	&= -(-1)^{p(a)p(b)+(p(a)+p(b))(p_M+p_N)+p_Mp_N}\yb_N \big([b_{-\la-\deb_\yb -\deb}a]\yb_M\big)\\
	&= -(-1)^{p(a)p(b)+(p(a)+p(b))(p_M+p_N)+p_Mp_N}[(\yb_N b)_{-\la-\deb-\deb_\yb }(a \yb_M)]
	\end{align*}
	where the last equality holds due to the Leibniz rule.
	The verification of the conformal Jacobi identity is left to the reader.	
\end{proof}
\begin{definition}\label{annihilationalgebra}
	Given a Lie conformal superalgebra $R$ of type $(r,s)$, the {\it{annihilation Lie superalgebra}} associated to $R$ is the  vector super space
	\[
	\mathcal A(R)=\tilde R /\tilde \deb \tilde R,
	\]
	with bracket given by
	\[
	[\yb _M a,b\yb _N]=\sum_{J\in S_{r,s}}\frac{1}{f(J)}(\yb _M{(\deb _{\yb}})_J)(a_{J}b)\yb _N
	=[(\yb_M a)_\la (b\yb_N)]_{|\la=0}.
	\]
	
\end{definition}
\begin{proposition}
	$\mathcal A(R)$ is a Lie superalgebra.
\end{proposition}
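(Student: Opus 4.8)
The plan is to derive the three requirements for a Lie superalgebra structure on $\mathcal A(R)=\tilde R/\tilde\deb\tilde R$---well-definedness of the bracket, skew-symmetry, and the Jacobi identity---from the conformal axioms \eqref{defsupconf1}--\eqref{defsupconf4} for the affinization $\tilde R$, which are at our disposal by the preceding Proposition. Writing $\bar x$ for the image in $\mathcal A(R)$ of $x\in\tilde R$, the starting point is the identity $[\bar x,\bar y]=\overline{(x_\emptyset y)}$, where by \eqref{7a} the $\emptyset$-product is the constant term $(x_\emptyset y)=[x_\la y]_{|\la=\mathbf 0}$; this is exactly the bracket of Definition \ref{annihilationalgebra}. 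First I would check that the bracket descends to the quotient. In the left slot, sesquilinearity \eqref{defsupconf1} gives $[(\tilde\de_i x)_\la y]=-\lambda_i[x_\la y]$, which vanishes at $\la=\mathbf 0$ already in $\tilde R$. In the right slot, rewriting $\tilde\de_i x=(-1)^{p_ip(x)}x\tilde\de_i$ and applying \eqref{defsupconf2} yields $[y_\la(x\tilde\de_i)]=[y_\la x](\tilde\de_i+\lambda_i)$, whose value at $\la=\mathbf 0$ is $(y_\emptyset x)\tilde\de_i\in\tilde R\tilde\deb=\tilde\deb\tilde R$, hence $\bar 0$ in $\mathcal A(R)$.

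For skew-symmetry I would evaluate conformal skew-symmetry \eqref{defsupconf3} at $\la=\mathbf 0$, obtaining $(y_\emptyset x)=-(-1)^{p(x)p(y)}[x_{-\tilde\deb}y]$, where $[x_{-\tilde\deb}y]=\sum_K \frac{1}{f(K)}(-\tilde\deb)_K(x_Ky)$ and in each summand the operators $\tilde\de_{k_j}$ act on the coefficient $(x_Ky)\in\tilde R$. The summand $K=\emptyset$ equals $(x_\emptyset y)$, whereas every $K\neq\emptyset$ carries a leftmost factor $\tilde\de_{k_1}$ applied to an element of $\tilde R$ and therefore lies in $\tilde\deb\tilde R$. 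Reducing modulo $\tilde\deb\tilde R$ thus collapses the sum to its $K=\emptyset$ term and gives $[\bar y,\bar x]=-(-1)^{p(x)p(y)}[\bar x,\bar y]$. It is precisely the relation $\tilde\deb\tilde R=0$ in the quotient that converts the shift $-\tilde\deb$ of the conformal skew-symmetry into honest skew-symmetry.

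For the Jacobi identity I would extract the constant term in both $\la$ and $\mub$ from the conformal Jacobi identity \eqref{defsupconf4}, regarding each side as a polynomial in $\la,\mub$ with coefficients in $\tilde R$. On the left, $[x_\la[y_\mub z]]=\sum_{K,L}\frac{\mub_K\la_L}{f(K)f(L)}(x_L(y_Kz))$ has constant term $(x_\emptyset(y_\emptyset z))$ (only $K=L=\emptyset$ contributes, so intervening signs are irrelevant). In the first term on the right, $[[x_\la y]_{\la+\mub}z]$, the factor $\la_K$ forces $K=\emptyset$ and then evaluating $\la+\mub$ at $\mathbf 0$ leaves $((x_\emptyset y)_\emptyset z)$; the last term similarly contributes $(y_\emptyset(x_\emptyset z))$. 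Passing to $\mathcal A(R)$ these constant terms read $[\bar x,[\bar y,\bar z]]=[[\bar x,\bar y],\bar z]+(-1)^{p(x)p(y)}[\bar y,[\bar x,\bar z]]$, the super Jacobi identity (well-definedness from the first step is used here to identify $\overline{(x_\emptyset(y_\emptyset z))}$ with $[\bar x,[\bar y,\bar z]]$, and similarly for the other terms).

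I expect the only real difficulty to be bookkeeping rather than conceptual: one must verify that setting $\la=\mathbf 0$ (and $\mub=\mathbf 0$) interacts correctly with the $\pm$-sign bimodule conventions and with the substitution $\la\mapsto-\la-\tilde\deb$, and that in the skew-symmetry and Jacobi steps the operators $\tilde\deb$ truly land in $\tilde\deb\tilde R$ before being discarded. Since the sign analysis was already carried out in proving that $\tilde R$ is a Lie conformal superalgebra, these verifications are routine, and no step needs an idea beyond evaluating the conformal axioms at $\la=\mathbf 0$ modulo $\tilde\deb\tilde R$.
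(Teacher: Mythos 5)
Your proof is correct and is essentially the paper's own argument written out in full: the paper disposes of the proposition by remarking that it is an immediate consequence of conformal skew-symmetry \eqref{defsupconf3} and the conformal Jacobi identity \eqref{defsupconf4} on $\tilde R$ together with the observation that $\de_i+\de_{y_i}=0$ on $\mathcal A(R)$, which is exactly what your three steps make precise (descent of the bracket via sesquilinearity, collapse of the $-\tilde\deb$ shift modulo $\tilde\deb\tilde R$ for skew-symmetry, and constant-term extraction in $\la,\mub$ for Jacobi). No discrepancy in approach; yours simply supplies the routine details the paper leaves to the reader.
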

\begin{proof}
	The proof is a straightforward generalization of the standard conformal case which is treated in \cite{K1}. One can also check that this is an immediate consequence of Properties \eqref{defsupconf3} and \eqref{defsupconf4} on $\tilde R$ together with the observation that $\de_i+\de_{y_i}=0$ on $\mathcal A(R)$.
\end{proof}
Next target is to extend the fundamental identity $\eqref{fundeven}$ to Lie conformal superalgebras. The crucial point here is to give the appropriate definition of $a_\la \in \inlinewedge[[\la]]\otimes \mathcal A(R)$: this is the main result in the next Proposition \ref{lemmino}. We first give some technical lemmas.
\begin{lemma}\label{lem1}
	Let $R,J$ be two finite sequences with entries in $\{1,\ldots,r+s\}$. Then
	\[
	\frac{1}{f(RJ)}\yb_{RJ}(\deb_\yb)_J=\frac{\eta_J}{f(R)}\yb_R,
	\]
	where $\eta_J=(-1)^{1+2+\cdots+q_J}$ and $q_J$ is the number of odd entries in $J$.
\end{lemma}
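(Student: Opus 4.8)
The plan is to separate the computation into its even and odd parts and to track the signs using the right-hand action of $\de_{y_i}$, which by the displayed bimodule convention (the relation defining $\de_{y_i} a\yb_M$ above) specializes, for $a=1$, to
\[
\yb_M\de_{y_i}=(-1)^{p_ip_M}(\de_{y_i}\yb_M),
\]
the right-hand side being the ordinary super-derivative of the monomial $\yb_M$. First I would reduce to the case $\yb_{RJ}\ne 0$, i.e.\ no odd index is repeated in $RJ$: this is the only regime in which the lemma is applied (repeated odd indices force the ambient $K$-products $(a_{KJ}b)$ in \eqref{Kproducts} to vanish, as $\la_{RJ}=0$), and it makes the odd indices occurring in $R$ and in $J$ pairwise distinct. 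Because the even variables $y_1,\dots,y_r$ are central and the even derivatives $\de_{y_1},\dots,\de_{y_r}$ have parity $0$, they commute with everything with no sign; hence I can write $\yb_{RJ}$ as (collected even monomial)$\cdot$(ordered odd monomial) and $(\deb_\yb)_J$ as (product of the even $\de_{y_i}$)$\cdot$(product of the odd $\de_{y_i}$, in the order of $J$), so that the whole expression factors into an even and an odd contribution.

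For the even contribution the computation is the classical one: differentiating $y_i^{\,m_i(R)+m_i(J)}$ by $\de_{y_i}^{m_i(J)}$ yields $\frac{(m_i(R)+m_i(J))!}{m_i(R)!}\,y_i^{\,m_i(R)}$ for each even $i$. I would then note that the resulting scalar $\prod_{i\le r}\frac{(m_i(R)+m_i(J))!}{m_i(R)!}$ is exactly $f(RJ)/f(R)$: for an odd index $i$ one has $m_i(R),m_i(J)\in\{0,1\}$, not both $1$, so each odd factor $\frac{(m_i(R)+m_i(J))!}{m_i(R)!}$ equals $1$ and may be inserted at no cost.

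The odd contribution is where $\eta_J$ is produced, and this is the main point. Writing the odd entries of $J$, in order of appearance, as distinct odd indices $\ell_1,\dots,\ell_q$ with $q=q_J$, and letting $\yb_{R_o}$ denote the odd part of $\yb_R$ (of parity $q_R$, where $q_R$ is the number of odd entries of $R$, disjoint from the $\ell_k$), I must evaluate
\[
\yb_{R_o}\,y_{\ell_1}\cdots y_{\ell_q}\;\de_{y_{\ell_1}}\cdots\de_{y_{\ell_q}}
\]
by applying the derivatives one at a time from the left. The key step is the identity, for each $k$,
\[
\big(\yb_{R_o}\,y_{\ell_k}\cdots y_{\ell_q}\big)\,\de_{y_{\ell_k}}
=(-1)^{\,q-k+1}\,\yb_{R_o}\,y_{\ell_{k+1}}\cdots y_{\ell_q},
\]
which I would obtain from the convention above together with super-Leibniz: the sign $(-1)^{p_{\ell_k}p_M}=(-1)^{q_R+q-k+1}$ coming from the right-action convention combines with the sign $(-1)^{q_R}$ incurred when $\de_{y_{\ell_k}}$ moves past $\yb_{R_o}$ to reach the leading $y_{\ell_k}$, and the two copies of $(-1)^{q_R}$ cancel.

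Multiplying these signs over $k=1,\dots,q$ telescopes the monomial down to $\yb_{R_o}$ and produces the total sign $(-1)^{\sum_{k=1}^q(q-k+1)}=(-1)^{1+2+\cdots+q}=\eta_J$; the triangular sum appears precisely because the parity of the shrinking odd tail drops by one at each application. Recombining the even and odd contributions gives $\yb_{RJ}(\deb_\yb)_J=\frac{\eta_J f(RJ)}{f(R)}\,\yb_R$, and dividing by $f(RJ)$ yields the claim. I expect the only genuine obstacle to be the sign bookkeeping in this last computation: everything hinges on using the correct right-action sign $(-1)^{p_ip_M}$ from the bimodule structure rather than the naive super-Leibniz sign, and on the cancellation of the $(-1)^{q_R}$ factors, which is what makes the accumulated sign depend on $q_J$ alone.
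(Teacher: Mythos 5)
Your proof is correct, and it supplies precisely the computation that the paper compresses into the single sentence ``it easily follows by induction on the length of $J$''. In substance the two arguments are the same: both peel off the derivatives in $(\deb_\yb)_J$ one at a time. Your organization, however, is more transparent: the even/odd factorization (legitimate, since even variables and even derivatives carry no signs), the classical multinomial identity giving $\prod_i m_i(RJ)!/m_i(R)!=f(RJ)/f(R)$ on the even side, and on the odd side the telescoping identity
\[
\bigl(\yb_{R_o}\,y_{\ell_k}\cdots y_{\ell_q}\bigr)\de_{y_{\ell_k}}=(-1)^{q-k+1}\,\yb_{R_o}\,y_{\ell_{k+1}}\cdots y_{\ell_q},
\]
where $\ell_1,\dots,\ell_q$ are the odd entries of $J$ in order and $\yb_{R_o}$ is the odd part of $\yb_R$, whose signs accumulate to $(-1)^{1+2+\cdots+q_J}=\eta_J$. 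Your derivation of this identity is the crux, and it is right: the right-action sign $(-1)^{q_R+q-k+1}$ and the Leibniz sign $(-1)^{q_R}$ (from moving $\de_{y_{\ell_k}}$ past $\yb_{R_o}$) combine so that all dependence on $q_R$ cancels, which is exactly why $\eta_J$ depends on $J$ alone.

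One point deserves emphasis, because it is where you genuinely go beyond the paper: the restriction to $\yb_{RJ}\neq 0$ is not a convenience but a necessity. If $RJ$ contains a repeated odd index while $R$ does not (e.g.\ $R=J=(r+1)$), the left-hand side of the lemma is $0$, since $\yb_{RJ}=y_{r+1}^2=0$, while the right-hand side is $\frac{\eta_J}{f(R)}\,\yb_R=-y_{r+1}\neq 0$; so the lemma as literally stated fails, and your reading of it as implicitly assuming no repeated odd index in $RJ$ is the correct one. Your claim that only this non-degenerate regime is ever used is also correct, though the mechanism is slightly misattributed: the lemma is invoked in the proof of Proposition \ref{lemmino}, where the degenerate terms are annihilated by the scalar coefficient $\la_{\bar J}\la_{\bar R}=\la_{\overline{RJ}}=0$, rather than by the vanishing of the $K$-products $(a_{KJ}b)$ in \eqref{Kproducts}; either way such terms never contribute, so your restricted statement is exactly what the paper needs.
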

\begin{proof}
	It easily follows by induction on the length of $J$.
\end{proof}
For $K=(k_1,\ldots,k_t)$ a sequence with entries in $\{1,\ldots,r+s\}$ we let $\bar K=(k_t,\ldots,k_1)$.
If $K$ is the empty set then $\bar K=K$.
\begin{lemma}\label{2.7}
	Let $K$ be a finite sequence with entries in $\{1,\ldots,r+s\}$. We have
	\[
	\frac{1}{f(K)}(\la+\mub)_K \yb_K= \sum_{I,R\in S_{r,s}:\, IR\sim K }\frac{1}{f(I)f(R)}\la_I\mub_R \yb_I \yb_R.
	\]
	and
	\[
	\frac{1}{f(K)}(\la+\mub)_{\bar K} \yb_K= \sum_{I,R\in S_{r,s}:\, IR\sim K }\frac{1}{f(I)f(R)}\la_{\bar R}\mub_{\bar I} \yb_I \yb_R.
	\]
\end{lemma}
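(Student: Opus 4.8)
The plan is to prove both identities by expanding the product $(\la+\mub)_K$ (respectively $(\la+\mub)_{\bar K}$) factor by factor and matching the resulting monomials against the terms on the right-hand side; the whole difficulty is a sign bookkeeping which, as it turns out, cancels exactly.

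First I would treat the first identity. Writing $K=(k_1,\dots,k_t)$, I expand
\[
(\la+\mub)_K=\prod_{j=1}^t(\lambda_{k_j}+\mu_{k_j})=\sum_{S\subseteq\{1,\dots,t\}}\Big(\prod_{j=1}^t z_j^{(S)}\Big),
\]
where the inner product keeps the positional order $j=1,\dots,t$ and $z_j^{(S)}=\mu_{k_j}$ if $j\in S$ and $z_j^{(S)}=\lambda_{k_j}$ otherwise. For a subset $S$ let $I^{(S)}$ (resp. $R^{(S)}$) be the subsequence of $K$ supported on the positions outside $S$ (resp. inside $S$), listed in increasing order. Multiplying by $\yb_K$ and then rearranging, \emph{inside the $\la/\mub$-block}, all the $\mu$-factors to the right of the $\lambda$-factors, and, \emph{inside the $\yb$-block}, the $y$'s whose positions lie in $S$ to the right, I claim that
\[
\Big(\prod_{j=1}^t z_j^{(S)}\Big)\yb_K=\la_{I^{(S)}}\mub_{R^{(S)}}\yb_{I^{(S)}}\yb_{R^{(S)}}.
\]
Since the $\la/\mub$-block stays entirely to the left of the $\yb$-block, no cross-signs arise, and the point is this: the sign produced by the first rearrangement counts exactly the pairs of odd indices $(j,j')$ with $j<j'$, $j\in S$, $j'\notin S$, and the sign produced by the second rearrangement counts the very same pairs; having equal parity, the two contributions cancel and the total sign is $+1$. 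This cancellation is the crux of the whole lemma.

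I would then observe that $\la_I\mub_R\yb_I\yb_R$ depends only on the $\sim$-classes of $I$ and $R$: replacing a representative by an equivalent sequence multiplies $\la_I$ and $\yb_I$ (resp. $\mub_R$ and $\yb_R$) by one and the same sign $\pm1$, which squares to $1$. Finally, for fixed classes with $IR\sim K$, the number of subsets $S$ producing $I^{(S)}\sim I$ and $R^{(S)}\sim R$ equals $\prod_i\binom{m_i(K)}{m_i(R)}=f(K)/(f(I)f(R))$, because one must choose, for each index $i$, which $m_i(R)$ of the $m_i(K)$ positions carrying the value $i$ belong to $S$. Summing over all $S$, grouping by classes, selecting representatives in $S_{r,s}$, and dividing by $f(K)$ then yields the first formula.

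For the second identity the argument is identical after reversing the product: expanding $(\la+\mub)_{\bar K}=\prod_{j=t}^{1}(\lambda_{k_j}+\mu_{k_j})$ in \emph{decreasing} positional order, and now letting $I^{(S)}$ be supported on $S$ and $R^{(S)}$ on its complement, the $\lambda$-factors spell $\la_{\overline{R^{(S)}}}$ and the $\mu$-factors spell $\mub_{\overline{I^{(S)}}}$ once grouped, while $\yb_K$ reorders to $\yb_{I^{(S)}}\yb_{R^{(S)}}$; the same inversion count shows the resulting sign is again $+1$, the representative-independence of $\la_{\overline R}\mub_{\overline I}\yb_I\yb_R$ follows as before, and the binomial count of subsets is unchanged. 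I expect the sign cancellation in the displayed claim to be the only genuinely delicate step; the representative-independence and the binomial count are then routine.
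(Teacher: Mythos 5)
Your proof is correct, and it follows essentially the same route as the paper's: both expand $(\la+\mub)_K$ into monomials $\la_I\mub_R$ weighted by the multiplicity $f(K)/(f(I)f(R))$, and both rest on the observation that the reordering sign for the $\la/\mub$-block coincides with the reordering sign for the $\yb$-block (the paper's $\varepsilon_{I,R;K}$, appearing squared), so the total sign is $+1$. The paper merely compresses this into one displayed polynomial identity and disposes of the reversed case via the symmetry $\varepsilon_{I,R;K}=\varepsilon_{\bar R,\bar I;\bar K}$, which is exactly your inversion-count argument in abbreviated form.
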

\begin{proof}
	It is sufficient to notice that
	\[
	\frac{1}{f(K)}(\la+\mub)_K = \sum_{I,R\in S_{r,s}:\, IR\sim K} \varepsilon_{I,R;K}\frac{1}{f(I)f(R)}\la_I\mub_R.
	\]
	where, if $IR\sim K$,  $\varepsilon_{I,R;K}$ is defined by $\la_I\la_R=\varepsilon_{I,R;K}\la_K$ and therefore $\varepsilon_{I,R;K}=\varepsilon_{\bar R,\bar I;\bar K}$.
\end{proof}

As in the completely even case the following result will turn out to be crucial in the representation theory of Lie conformal superalgebras (cf.\ \cite{CK1}).

\begin{proposition}\label{lemmino}
	Let $R$ be a Lie conformal superalgebra of type $(r,s)$. For $a\in R$, let
	$$a_{\la}=\sum_{K\in S_{r,s}}(-1)^{p_K}\frac{\la_{\bar K}}{f(K)} \yb_K a \in \mathcal{A}(R)[[\la]].$$
	Then $$[a_{\la},b_{\mub}]=[a_{\la}b]_{\la+\mub}.$$
\end{proposition}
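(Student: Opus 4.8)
The plan is to prove the identity by a direct computation, expanding both sides into sums over $S_{r,s}$ and matching them term by term. This parallels the proof of the fundamental identity \eqref{fundeven} in the purely even case: there the two elementary manipulations are $\tfrac{1}{j!}\de_y^j y^m=\binom{m}{j}y^{m-j}$ and the binomial theorem, and here their roles are played by Lemmas \ref{lem1} and \ref{2.7} respectively. The only genuinely new ingredient is the systematic bookkeeping of Koszul signs, for which the reversals $\bar K$ and the signs $(-1)^{p_K}$ built into the definition of $a_\la$ have been designed.

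First I would expand the left-hand side. Substituting the definitions of $a_\la$ and $b_\mub$, pulling the scalars $\la_{\bar K}$ and $\mub_{\bar L}$ out of the bracket (incurring a Koszul sign when $\mub_{\bar L}$ is commuted past $\yb_K a$), and rewriting $\yb_L b=(-1)^{p_Lp(b)}b\yb_L$ so that the annihilation bracket of Definition \ref{annihilationalgebra} applies, one gets a triple sum over $K,L\in S_{r,s}$ and over the internal product index $J$. The decisive simplification is that $(\yb_K(\deb_{\yb})_J)$ vanishes unless the multiset of $J$ is contained in that of $K$; writing $K\sim RJ$ for the complementary sequence $R$ and invoking Lemma \ref{lem1}, the factor $\tfrac{1}{f(K)f(J)}\yb_K(\deb_{\yb})_J$ collapses to $\pm\tfrac{\eta_J}{f(J)f(R)}\yb_R$. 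After reordering the $\yb$-monomials in $\yb_R(a_Jb)\yb_L$ into a standard order by the bimodule rule, each term of the left side becomes a scalar multiple of $\yb_I\yb_R(a_Jb)$, where $I$ comes from $L$.

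Next I would expand the right-hand side. Writing $[a_\la b]=\sum_{K'}\tfrac{\la_{K'}}{f(K')}(a_{K'}b)$ via \eqref{7a} and applying $(-)_{\la+\mub}$ to each coefficient $(a_{K'}b)\in R$, I would use the second identity of Lemma \ref{2.7} to split $\tfrac{1}{f(P)}(\la+\mub)_{\bar P}\yb_P=\sum_{IR\sim P}\tfrac{1}{f(I)f(R)}\la_{\bar R}\mub_{\bar I}\yb_I\yb_R$. This produces exactly the same type of terms $\yb_I\yb_R(a_{K'}b)$, now indexed by the product index $K'$ and the splitting $P\sim IR$.

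The final and hardest step is the term-by-term matching: one identifies the internal product index $J$ on the left with $K'$ on the right, the complement $R$ on the left with the $R$-part of the splitting, and the sequence $I$ arising from $L$ with the $I$-part. The $\yb$-monomials and the two families of terms then coincide, and what remains is to check that all the accumulated signs agree. Here the conventions pay off: the reversals (so that $\la_{\overline{RJ}}=\la_{\bar J}\la_{\bar R}$, reducing the comparison to $\la_{\bar J}=\pm\la_J$), the sign $(-1)^{p_K}$ in the definition of $a_\la$, the factor $\eta_J$ from Lemma \ref{lem1}, and the Koszul signs from the bimodule structure combine into a single sign identity depending only on $q_J,p_J,p_R,p_L$ and the parities of $a,b$. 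Verifying this cancellation --- essentially that $\eta_J$, the reversal sign $\la_{\bar J}=\pm\la_J$, and the commutation signs annihilate one another --- is the main obstacle; the rest is the same reorganization of summations as in the even case.
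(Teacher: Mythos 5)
Your proposal is correct and follows essentially the same route as the paper's proof: expand $[a_\la,b_\mub]$ through the annihilation bracket, re-index $K\sim RJ$ and collapse $\frac{1}{f(RJ)}\,\yb_{RJ}(\deb_\yb)_J$ via Lemma \ref{lem1}, then reassemble the $\la$- and $\mub$-parts with Lemma \ref{2.7}. The sign cancellation you defer as ``the main obstacle'' is exactly the identity $\la_J=(-1)^{p_J}\eta_J\la_{\bar J}$, which the paper invokes at the corresponding point and which indeed closes the computation.
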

\begin{proof}
	We have
	\begin{align*}
	[a_\la,b_\mub]&=\sum_{K,H}(-1)^{p_K}\frac{\la_{\bar K}}{f(K)}[\yb_Ka,b\yb_H]\frac{\mub_{\bar H}}{f(H)}\\
	&=\sum_{K,H,J}(-1)^{p_K}\frac{\la_{\bar K}}{f(K)f(J)} (\yb_K(\deb_\yb)_J)(a_Jb)\yb_H   \frac{\mub_{\bar H}}{f(H)}\\
	&=\sum_{R,H,J}(-1)^{p_R+p_J}\frac{\la_{\bar J}\la_{\bar R}}{f(J)f(RJ)}(\yb_{RJ}(\deb_\yb)_J)(a_Jb)\yb_H   \frac{\mub_{\bar H}}{f(H)}\\
	&= \sum_{R,H,J}(-1)^{p_R+p_J}\frac{\la_{\bar J}\la_{\bar R}}{f(J)f(R)}\eta_J \yb_{R}(a_Jb)\yb_H   \frac{\mub_{\bar H}}{f(H)}
	\end{align*}
	by Lemma \ref{lem1}. Now notice that $\lambda_J=(-1)^{p_J}\eta_J\lambda_{\bar J}$ hence
	\begin{align*}
	[a_\la,b_\mub]&=\sum_{R,H}(-1)^{p_R}\frac{\la_{\bar R}}{f(R)f(H)}\yb_{R}\yb_H \mub_{\bar H}\sum_J\frac{\la_J}{f(J)}(a_Jb)\\
	&= \sum_{R,H}(-1)^{p_R+p_H}\frac{\la_{\bar R}}{f(R)f(H)}\mub_{\bar H}\yb_{H}\yb_R \sum_J\frac{\la_J}{f(J)}(a_Jb)\\
	&=\sum_S (-1)^{p_S}\frac{(\la+\mub)_{\bar S}}{f(S)}\yb_S \sum_J\frac{\la_J}{f(J)}(a_Jb)\\
	&=[a_\la b]_{\la+\mub},
	\end{align*}
	by Lemma \ref{2.7}.
	
\end{proof}
\section{Conformal modules and conformal duals}\label{two}
This section is dedicated to the study of modules over Lie conformal superalgebras.
\begin{definition}
	A  {\it{conformal module}} $M$ over a Lie conformal superalgebra $R$ of type $(r,s)$ is a ${\mathbb Z}/2\Z$-graded 
	$\inlinewedge[\deb]$-module with a ${\mathbb Z}/2\Z$-graded linear map
	$$R\otimes M\rightarrow \displaywedge[\la]\otimes M, ~~
	a\otimes v\mapsto a_{\la}v$$
	such that
	\begin{itemize}
		\item[(M1)] $(\de_i a)_{\la}v=[\de_i,a_{\la}]v=-\lambda_ia_{\la}v$;
		\item[(M2)] $[a_{\la},b_{\mub}]v=a_{\la}(b_{\mub}v)-(-1)^{p(a)p(b)}b_{\mub}(a_{\la})v=
		(a_{\la}b)_{\la+\mub}v.$
	\end{itemize} 
\end{definition}

\begin{definition} A Lie conformal superalgebra $R$ of type $(r,s)$ is called $\Z$-graded if $\inlinewedge[\la]\otimes R=\oplus_{d\in\Z} (\inlinewedge[\la]\otimes R)_d$, where $(\inlinewedge[\la]\otimes R)_d$ denotes the
	homogeneous component of degree $d$, and for every homogeneous elements $a,b\in \inlinewedge[\la]\otimes R$ one has:
	\begin{itemize}
		\item[i)] $\deg(\lambda_i a)=\deg (a)-2$;
		\item[ii)] $\deg(\partial_i a)=\deg (a)-2$;
		\item[iii)] $\deg[a_\la b]=\deg(a)+\deg(b)$.
	\end{itemize}
\end{definition}

Notice that if $R$ is a $\Z$-graded Lie conformal superalgebra of type $(r,s)$ then its annihilation algebra $\mathcal A(R)$ inherits a
$\Z$-gradation by setting \[\deg(a\yb_M)=\deg(a)+2\,\ell(M),\] where, for $M=(m_1,\dots,m_t)$, $\ell(M)=t$.

In what follows we assume the following technical conditions on a Lie conformal superalgebra $R$ of type $(r,s)$, which turn out to be satisfied in many interesting cases: we state them explicitly for future reference. 
\begin{assump}\label{assumption} ${}$
	\begin{enumerate}
		\item $R$ is $\Z$-graded;
		\item The induced $\Z$-gradation on $\mathcal A(R)$ has depth at most 3;
		\item The homogeneous components  $\mathcal A(R)_{-1}$ and  $\mathcal A(R)_{-3}$ are purely odd;
\item The map $\ad:\mathcal A(R)_{-2}\rightarrow \textrm{der} (\mathcal A(R))$ is injective and its image is  $D=\langle \partial_{y_1},\dots, \partial_{y_{r+s}}\rangle$. 
\end{enumerate}
\end{assump}
We consider the semi-direct sum of Lie superalgebras $D\ltimes  \mathcal A(R)$, we observe that the subset $I=\{x-\ad(x),\, x\in \mathcal A(R)_{-2}\}$ is an ideal and we set
\[
 \mathfrak g(R)=(D \ltimes \mathcal A(R))/I.
\]
By Assumptions \ref{assumption} (4), $\mathfrak{g}(R)\cong\mathcal A(R)$ and 
$\mathfrak{g}(R)_{-2}\cong D$. Indeed we will identify $\mathfrak g(R)_{-2}$ with $D=\langle \de_{y_1},\ldots,\de_{y_{r+s}}\rangle$.

We point out that the Lie superalgebra $D\ltimes \mathcal A(R)$ is the natural generalization of the so-called extended annihilation algebra introduced by Cheng and Kac in \cite{CK1}. A key observation made in \cite{CK1} is that conformal $R$-modules are exactly the same as continuous (called conformal in \cite{BKLR}) modules over the extended annihilation algebra. The following proposition extends this result in our context and is proved using Proposition \ref{lemmino}.

\begin{proposition}
\label{modcorr}
A conformal $R$-module is precisely a continuous module over $D\ltimes \mathcal A(R)$, i.e. a module $M$ such that for every $v\in M$ and every $a\in R$, $(\yb_Ka).v\neq 0$ only for a finite number of $K$. 
The equivalence between the two structures is provided by the following relations:
\begin{itemize}
\item $a_\la v=\sum_{K}(-1)^{p_K}\frac{\la_{\bar K}}{f(K)} (\yb_Ka).v$;
\item $\de_i v=-\de_{y_i}.v$.
\end{itemize}
\end{proposition}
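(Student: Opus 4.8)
The plan is to establish a bijective correspondence between the two structures by coefficient extraction and then to verify that the defining axioms match term by term, the central computational input being the fundamental identity of Proposition~\ref{lemmino}.

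First I would set up the dictionary in both directions. Given a continuous $D\ltimes\mathcal A(R)$-module $M$, I define $a_\la v:=\sum_{K\in S_{r,s}}(-1)^{p_K}\frac{\la_{\bar K}}{f(K)}(\yb_K a).v$ and $\de_i v:=-\de_{y_i}.v$; continuity guarantees that only finitely many $(\yb_K a).v$ are nonzero, so that $a_\la v$ is a genuine element of $\displaywedge[\la]\otimes M$ rather than a formal series. Conversely, given a conformal module, since the monomials $\la_{\bar K}$, $K\in S_{r,s}$, are linearly independent in $\inlinewedge[\la]$, I recover each operator $(\yb_K a)$ on $M$ as the coefficient of $(-1)^{p_K}\la_{\bar K}/f(K)$ in $a_\la v$, and set $\de_{y_i}.v:=-\de_i v$. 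The defining relations of the $K$-products, namely $(a_Kb)=0$ when $\la_K=0$ and $\la_J(a_Jb)=\la_K(a_Kb)$ for $J\sim K$, ensure that these two assignments are mutually inverse; this yields a bijection of the underlying data, after which it remains only to match the axioms.

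Next I would check that axiom (M2) is precisely the statement that $M$ is an $\mathcal A(R)$-module. Rewriting (M2) through the generating series, its left-hand side $[a_\la,b_\mub]v$ is the supercommutator of the operators $a_\la,b_\mub$ on $M$, while by Proposition~\ref{lemmino} the element $[a_\la,b_\mub]$ of $\mathcal A(R)[[\la,\mub]]$ equals $[a_\la b]_{\la+\mub}$. Extracting the coefficient of a monomial $\la_{\bar K}\mub_{\bar H}$ from each side and using the expansion \eqref{7a}, the identity $[a_\la,b_\mub]v=(a_\la b)_{\la+\mub}v$ turns into exactly the requirement that the brackets $[\yb_K a,\yb_H b]$ of Definition~\ref{annihilationalgebra} act as the corresponding compositions of operators; thus (M2) holds if and only if the $(\yb_K a)$ furnish a representation of $\mathcal A(R)$. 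I would then treat the $D$-action and axiom (M1): applying $\de_{y_i}$ to $a_\la$, the derivation property together with the action of $\de_{y_i}$ on $\yb_K$ (which deletes an index and, after collecting signs through $p_K$ and the reversal $\bar K$, reproduces $a_\la$ with an extra factor $\lambda_i$) gives $[\de_{y_i},a_\la]=\lambda_i a_\la$ as operators. Since $\de_i$ acts as $-\de_{y_i}$, this is exactly the sesquilinearity relation $[\de_i,a_\la]v=-\lambda_i a_\la v$ of (M1), and the identification $\de_i=-\de_{y_i}$ simultaneously matches the $\inlinewedge[\deb]$-module structure on $M$ with the semidirect structure of $D\ltimes\mathcal A(R)$, via the relation $\de_i+\de_{y_i}=0$ on $\mathcal A(R)$ noted in the proof that $\mathcal A(R)$ is a Lie superalgebra.

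The main obstacle I anticipate is not conceptual but bookkeeping: correctly tracking the parity signs $(-1)^{p_K}$, the Koszul signs arising when $\de_{y_i}$ or a product passes through odd factors, and the reversal $K\mapsto\bar K$, so that the coefficient extraction in the super setting is genuinely a bijection and the signs on both sides of (M1) and (M2) agree. Proposition~\ref{lemmino} together with Lemmas~\ref{lem1} and~\ref{2.7} are designed precisely to absorb this sign bookkeeping, so the verification should reduce to a careful but routine comparison of coefficients.
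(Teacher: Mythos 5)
Your proposal is correct and takes essentially the same approach as the paper: the paper gives no written-out proof of this proposition, stating only that it extends the result of \cite{CK1} and ``is proved using Proposition \ref{lemmino}''. What you wrote---the coefficient-extraction dictionary, the reduction of (M2) to the identity $[a_{\la},b_{\mub}]=[a_{\la}b]_{\la+\mub}$ of Proposition \ref{lemmino}, and the reduction of (M1) to the semidirect-product structure together with the relation $\de_i+\de_{y_i}=0$ on $\mathcal A(R)$---is precisely that intended argument, spelled out with the sign bookkeeping acknowledged.
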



\begin{definition} \label{defcoherent}We say that a conformal $R$-module $M$ is coherent if the action of the ideal $I=\{x-\ad(x),\, x\in \mathcal A(R)_{-2}\}$ is trivial.
\end{definition}
Thanks to Proposition \ref{modcorr}, a coherent $R$-module is precisely a continuous module over $\mathfrak g(R)$.
One checks directly that a conformal $R$-module $M$ is coherent if and only if it satisfies the following property:
for every $a\in R$ and all $K$ such that $\deg (\yb_K a)=-2$, i.e.\ ${\yb_K a}=\sum_i\alpha_i \partial_{y_{i}}\in
\mathfrak g(R)_{-2}$, if
$$a_\la v=\sum_{H}(-1)^{p_H}\frac{\la_{\bar H}}{f(H)}v_H,$$
then $v_K=-\sum_i\alpha_i\partial_i v$.


\begin{definition} The conformal dual $M^*$ of a conformal $R$-module $M$ is defined as
$$M^*=\{f_{\la}: M\rightarrow \displaywedge[\la]~|~ f_{\la}(\de_i m)=(-1)^{p_ip(f)}\lambda_i f_{\la}(m),
~{\mbox{for all}}~ m\in M\mbox{ and }i=1,\ldots,r+s\},$$
with the structure of $\inlinewedge[\deb]$-module given by $(\de_i f)_{\la}(m)=-\lambda_i f_{\la}(m)$, and
with the following $\la$-action of $R$:
$$(a_{\la}f)_{\mub}m=-(-1)^{p(a)p(f)}f_{\mub-\la}(a_{\la}m), ~a\in R, m\in M.$$
Here by $p(f)$ we denote the parity of the map $f_\la$.
\end{definition}

\begin{proposition} If $M$ is a conformal $R$-module, then $M^*$ is a conformal $R$-module.
If, in addition, $M$ is coherent, then $M^*$ is also coherent.
\end{proposition}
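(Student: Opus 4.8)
The plan is to establish both assertions by unwinding the defining formulas, the only genuine labor being the systematic tracking of super signs. \emph{$M^*$ is a conformal module.} First I would check that $a_\la f$ really lands in $\inlinewedge[\la]\otimes M^*$. Polynomiality in $\la$ is automatic, since $a_\la m$ is polynomial in $\la$ and each $f_{\mub}(\cdot)$ is polynomial in $\mub$; so it remains to see that every $\la$-coefficient of $(a_\la f)_{\mub}m=-(-1)^{p(a)p(f)}f_{\mub-\la}(a_\la m)$ obeys the sesquilinearity relation defining $M^*$. Rewriting axiom (M1) for $M$ as $a_\la(\de_i m)=(-1)^{p_ip(a)}\big(\de_i(a_\la m)+\lambda_i a_\la m\big)$ and applying $f_{\mub-\la}$, the defining relation $f_{\mub-\la}(\de_i x)=(-1)^{p_ip(f)}(\mu_i-\lambda_i)f_{\mub-\la}(x)$ together with the Koszul sign $f_{\mub-\la}(\lambda_i x)=(-1)^{p_ip(f)}\lambda_i f_{\mub-\la}(x)$ combine so that the $\lambda_i$-terms cancel and only $\mu_i$ survives, yielding $(a_\la f)_{\mub}(\de_i m)=(-1)^{p_i(p(a)+p(f))}\mu_i (a_\la f)_{\mub}(m)$, which is exactly the condition for an element of $M^*$ of parity $p(a)+p(f)$.

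Next I would verify axioms (M1) and (M2) for $M^*$. Axiom (M1), namely $(\de_i a)_\la f=-\lambda_i a_\la f$, follows by the same manipulation, and its companion $[\de_i,a_\la]f=-\lambda_i a_\la f$ is matched against the prescribed action $(\de_i f)_{\mub}(m)=-\mu_i f_{\mub}(m)$. The substantive check is (M2). Evaluating $[a_\la,b_\mub]f$ on an arbitrary $m$ by applying the definition of the $R$-action on $M^*$ twice gives $(-1)^{(p(a)+p(b))p(f)}f_{\nub-\la-\mub}\big((-1)^{p(a)p(b)}b_\mub a_\la m-a_\la b_\mub m\big)$, while evaluating $(a_\la b)_{\la+\mub}f$ and inserting axiom (M2) for $M$ in the form $(a_\la b)_{\la+\mub}m=a_\la b_\mub m-(-1)^{p(a)p(b)}b_\mub a_\la m$ produces the same expression. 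Hence (M2) for $M^*$ is a formal consequence of (M2) for $M$, and $M^*$ is a conformal $R$-module.

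\emph{Coherence.} Suppose now that $M$ is coherent. I would use the criterion stated after Definition \ref{defcoherent}, together with Proposition \ref{modcorr}, which identifies the coefficient $f_K$ in $a_\la f=\sum_H(-1)^{p_H}\frac{\la_{\bar H}}{f(H)}f_H$ with the action $(\yb_K a).f$. Fix $a$ and $K$ with $\yb_K a=\sum_i\alpha_i\de_{y_i}\in\mathfrak g(R)_{-2}$, so that $\deg(\yb_K a)=\deg a+2\ell(K)=-2$, and note that by parity homogeneity $\alpha_i\neq 0$ forces $p_i=p(a)+p_K$. Writing $a_\la m=\sum_L(-1)^{p_L}\frac{\la_{\bar L}}{f(L)}m_L$ with $m_L=(\yb_L a).m$ and substituting into $(a_\la f)_{\mub}m=-(-1)^{p(a)p(f)}f_{\mub-\la}(a_\la m)$, I would extract the coefficient of $\la_{\bar K}$. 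The structural point is that only $L=K$ contributes: the monomial $\la_{\bar L}$ times the $\la$-degree-$d$ part of the Taylor expansion of $f_{\mub-\la}(m_L)$ reaches $\la$-degree $\ell(K)$ only when $\ell(L)\le\ell(K)$, and for $\ell(L)<\ell(K)$ one has $\deg(\yb_L a)=\deg a+2\ell(L)\le -4$, so $\yb_L a=0$ as $\mathcal A(R)$ has depth at most $3$ by Assumptions \ref{assumption}(2), forcing $m_L=0$. Thus the $\la_{\bar K}$-coefficient comes solely from the $\la$-degree-$0$ term $f_{\mub}(m_K)$.

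Finally I would feed in the coherence of $M$, namely $m_K=-\sum_i\alpha_i\de_i m$, together with the defining relation $f_{\mub}(\de_i m)=(-1)^{p_ip(f)}\mu_i f_{\mub}(m)$. The accumulated sign is $(-1)^{p(a)p(f)+p_Kp(f)+p_ip(f)}$, which is trivial because $p_i=p(a)+p_K$ on the surviving terms; this gives $(f_K)_{\mub}(m)=\sum_i\alpha_i\mu_i f_{\mub}(m)=-\sum_i\alpha_i(\de_i f)_{\mub}(m)$, that is, $f_K=-\sum_i\alpha_i\de_i f$. By the criterion, $M^*$ is coherent. I expect the main obstacle to be purely the discipline of the sign bookkeeping; the one genuinely structural ingredient is the degree-and-depth count that collapses the Taylor expansion to the single term $L=K$.
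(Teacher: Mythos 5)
Your proof is correct and follows essentially the same route as the paper's: direct sign-tracked verification of (M1)--(M2) for the dual action, and, for coherence, isolating the coefficient of $\la_{\bar K}$ in $(a_\la f)_\mub m$ using the depth-at-most-3 assumption together with the coherence of $M$ (the paper phrases this as a congruence modulo the span of the monomials $\la_{\bar H}v$, $H\neq K$, $\ell(H)\geq\ell(K)$, which is the same thing). If anything you are slightly more explicit than the paper, which leaves implicit both the check that $a_\la f$ actually lies in $\inlinewedge[\la]\otimes M^*$ and the parity-matching $p_i=p(a)+p_K$ that makes the signs collapse.
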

\begin{proof}
We need to check that properties (M1) and (M2) hold for $M^*$.
We have:
\begin{align*}
((\de_ia)_{\la}f)_{\mub}m&=-(-1)^{(p_i+p(a))p(f)}f_{\mub-\la}((\de_ia)_{\la}m)=
(-1)^{(p_i+p(a))p(f)}(-1)^{p_ip(f)}\lambda_if_{\mub-\la}(a_{\la}m)\\
&=(-1)^{p(a)p(f)}\lambda_if_{\mub-\la}(a_{\la}m).
\end{align*}

Besides,
\begin{align*} 
([\de_i,a_{\la}]f)_{\mub}m&=(\de_i(a_{\la}f))_{\mub}m-(-1)^{p_ip(a)}(a_{\la}\de_if)_{\mub}m\\
&=-\mu_i(a_{\la}f)_{\mub}m+(-1)^{p_ip(a)+p(a)(p(f)+p_i)}\de_if_{\mub-\la}(a_{\la}m)\\
&=\mu_i(-1)^{p(a)p(f)}f_{\mub-\la}(a_{\la}m)+(-1)^{p(a)p(f)}(\lambda_i-\mu_i)f_{\mub-\la}(a_{\la}m)\\
&=(-1)^{p(a)p(f)}\lambda_if_{\mub-\la}(a_{\la}m).
\end{align*}

(M1) thus follows.
As for (M2), we have:
\begin{align*}
([a_{\la},b_\mub]f)_\nub m&=-(-1)^{p(a)(p(b)+p(f))}(b_{\mub}f)_{\nub-\la}(a_{\la}m)+
(-1)^{p(b)p(f)}(a_{\la}f)_{\nub-\mub}(b_\mub m)\\
&=(-1)^{p(a)(p(b)+p(f))+p(b)p(f)}f_{\nub-\la-\mub}(b_{\mub}(a_{\la}m))-
(-1)^{p(b)p(f)+p(a)p(f)}f_{\nub-\la-\mub}(a_{\la}(b_{\mub}m))\\
&=-(-1)^{(p(a)+p(b))p(f)}f_{\nub-\la-\mub}([a_\la,b_\mub]m)\\
&=-(-1)^{(p(a)+p(b))p(f)}f_{\nub-\la-\mub}((a_\la b)_{\la+\mub}m)=((a_\la b)_{\la+\mub}f)_{\nub}m.
\end{align*}
Now assume that $M$ is coherent. Let $\yb_K a=\sum_i\alpha_i\de_{y_i}\in \mathfrak g(R)_{-2}$. Let $M_K$ be the subspace of $\inlinewedge(\la)\otimes M$ spanned by all elements of the form $\la_{\bar H}v$ for all $H\neq K$, $\ell(H)\geq \ell(K)$ and all $v\in M$. We have
\[
a_\la v=-(-1)^{p_K}\frac{\la_{\bar K}}{f(K)}\sum \alpha_i \de_iv \mod M_K.\]
Therefore
\begin{align*}
(a_\la f)_\mub v&=-(-1)^{p(a)p(f)}f_{\mub-\la}(a_\la v)\\
&= -(-1)^{p(a)p(f)}f_{\mub-\la}\big(-(-1)^{p_K}\frac{\la_{\bar K}}{f(K)}\sum_i {\alpha_i \de_iv}\big) &\mod M_K\\
&=(-1)^{p_K}\frac{\la_{\bar K}}{f(K)}\sum_i \alpha_i(\mu_i-\lambda_i)f_{\mub-\la}v &\mod M_K\\
&= (-1)^{p_K}\frac{\la_{\bar K}}{f(K)}\sum_i \alpha_i \mu_i \big( f_{\mub-\la}v\big)\big|_{\la=0} &\mod M_K\\
&= (-1)^{p_K}\frac{\la_{\bar K}}{f(K)}\sum_i \alpha_i \mu_i  (f_{\mub}v) &\mod M_K
\end{align*}
On the other hand 
\begin{align*}
(\sum_i \alpha_i \de_i f)_\mub v&=-\sum_{i}\alpha_i \mu_i (f_{\mub}v)
\end{align*}
and the proof is complete by the observation following Definition \ref{defcoherent}.
\end{proof}

\begin{proposition} Let $T: M\rightarrow N$ be a morphism of conformal $R$-modules i.e. a linear map such that:
\begin{enumerate}
\item $T(\de_i m)=(-1)^{p_ip(T)}\de_iT(m)$,
\item $T(a_{\la}m)=(-1)^{p(a)p(T)}a_{\la}T(m)$,
\end{enumerate}
then the map $T^*: N^* \rightarrow M^*$ given by: $(T^*(f))_{\la}m=-(-1)^{p(T)p(f)}f_{\la}T(m)$
is a morphism of conformal $R$-modules.
\end{proposition}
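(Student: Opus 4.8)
The plan is to verify directly the three requirements that make $T^*$ a morphism of conformal $R$-modules, using only the defining formulas for the conformal duals $M^*$ and $N^*$ together with conditions (1) and (2) satisfied by $T$. The starting observation is that $p(T^*)=p(T)$: indeed the defining formula $(T^*(f))_\la m=-(-1)^{p(T)p(f)}f_\la T(m)$ shows that $T^*$ shifts parities exactly as $T$ does, so $p(T^*(f))=p(T)+p(f)$. With this in hand, the three things to check are: (i) that $T^*(f)$ genuinely lies in $M^*$, i.e.\ that it satisfies $(T^*(f))_\la(\de_i m)=(-1)^{p_ip(T^*(f))}\lambda_i(T^*(f))_\la(m)$; (ii) the $\inlinewedge[\deb]$-linearity $T^*(\de_i f)=(-1)^{p_ip(T^*)}\de_iT^*(f)$; and (iii) the intertwining of the $\la$-actions $T^*(a_\la f)=(-1)^{p(a)p(T^*)}a_\la T^*(f)$.

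Steps (i) and (ii) are short sign computations. For (i) I would write $(T^*(f))_\la(\de_i m)=-(-1)^{p(T)p(f)}f_\la(T(\de_i m))$, move $\de_i$ across $T$ by condition (1), and then use that $f\in N^*$ to extract $\lambda_i$; the three sign factors that appear combine into $(-1)^{p_i(p(T)+p(f))}$, which is precisely $(-1)^{p_ip(T^*(f))}$, yielding the required identity. For (ii) I would evaluate both sides on an arbitrary $m$: the left-hand side uses the $\de_i$-action on $N^*$, namely $(\de_i f)_\la=-\lambda_i f_\la$, while the right-hand side uses the $\de_i$-action on $M^*$; after expanding $p(\de_i f)=p_i+p(f)$, the two expressions differ only by the placement of the sign $(-1)^{p_ip(T)}$, and they agree.

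The substantive step is (iii), and it is here that the main difficulty lies. I would evaluate both sides on $m$ against an evaluation variable $\mub$, keeping the action variable $\la$ separate, and treat $T^*$ as extended to $\inlinewedge[\la]\otimes N^*$ so that the $\la$-dependence is simply carried along. On the left I first apply $T^*$ to the $N^*$-element $a_\la f$ (of parity $p(a)+p(f)$), then the $R$-action formula on $N^*$ to reach $f_{\mub-\la}(a_\la T(m))$ up to sign, and finally condition (2) to rewrite $a_\la T(m)=(-1)^{p(a)p(T)}T(a_\la m)$. On the right I instead apply the $R$-action on $M^*$ first and the definition of $T^*$ second, arriving at $f_{\mub-\la}(T(a_\la m))$ up to sign. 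The whole statement then reduces to matching the two accumulated signs; both simplify to $(-1)^{p(a)p(f)+p(T)p(f)}$, because each cross term $p(a)p(T)$ occurs twice and cancels modulo $2$. The only genuine obstacle is this three-parity sign bookkeeping, together with the need to make sense of $T^*$ on $\la$-dependent arguments; once these are handled, the required cancellations are automatic and the proof is complete.
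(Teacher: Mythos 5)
Your proposal is correct and follows essentially the same route as the paper's own proof: a direct verification that $T^*(f)$ lands in $M^*$, that $T^*$ commutes with the $\de_i$ up to the sign $(-1)^{p_ip(T)}$, and that the $\la$-actions intertwine, the last by evaluating both $(T^*(a_\la f))_{\mub}m$ and $(a_\la T^*(f))_{\mub}m$ and matching signs via condition (2) for $T$. Your sign bookkeeping, including the observation that $p(T^*)=p(T)$ and that the cross terms $p(a)p(T)$ cancel in pairs, reproduces exactly the computation in the paper.
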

\begin{proof}
Let us first check that if $f$ lies in $M^*$ then $T^*(f)\in N^*$. Indeed, for $m\in M$  we have
\begin{align*}
 (T^*(f))_\la (\de_im)&=-(-1)^{p(T)p(f)}f_\la  (T(\de_i m))\\&=-(-1)^{p(T)(p(f)+p_i)}f_\la (\de_i T(m)) =-(-1)^{p(T)(p(f)+p_i)+p(f)p_i}\lambda_i f_\la T(m)
\end{align*}
and 
\[
 \lambda_i(T^*(f))_\la m=-(-1)^{p(f)p(T)}\lambda_i f_\la T(m).
\]

 Let us verify property (1) for $T^*$.  We have
 \begin{align*}
  (T^*(\de_i f))_\la m&=-(-1)^{p(T)(p(f)+p_i)}\de_i f_\la T(m))\\
  & =(-1)^{p(T)(p(f)+p_i)}\lambda_i f_\la T(m)
 \end{align*}
 and \[
      \de_i(T^*(f))_\la m=-\lambda_i(T^*f)_\la m=(-1)^{p(f)p(T)}\lambda_i f_\la T(m).
     \]
     Besides we have
     \[
      (T^*(a_\la f))_\mub m=-(-1)^{p(T)(p(a)+p(f))}(a_\la f)_\mub T(m)=(-1)^{p(T)(p(a)+p(f))+p(a)p(f)}f_{\mub-\la}(a_\la T(m))
     \]
     and
     \[
      (a_\la T^*(f))_\mub m = -(-1)^{p(a)(p(f)+p(T))} (T^*(f))_{\mub-\la}(a_\la m)=(-1)^{p(a)(p(f)+p(T))+p(f)p(T)} f_{\mub-\la} (T(a_\la m)).
     \]
     Property (2) for $T^*$ follows from property (2) for $T$.
     \end{proof}

\begin{proposition}\label{azioneduale} Let $M$ be a conformal $R$-module which is free and finitely-generated as a $\inlinewedge[\deb]$-module. If $\{m_i\}$ is a basis of $M$ such that
$a_{\la}m_j=\sum_k P_{jk}(\la, \deb)m_k$, then $M^*$ is a free, finitely-generated $\inlinewedge[\deb]$-module,
with basis $\{m_i^*\}$ given by: $(m_i^*)_{\la}m_k=\delta_{ik}$ and
$$a_{\la}m_i^*=-\sum_j(-1)^{p(m_i)(p(m_j)+p(m_i))}P_{ji}(\la,-\deb-\la)m_j^*.$$
\end{proposition}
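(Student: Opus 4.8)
The plan is to proceed in two stages: first establish that $M^*$ is free with the claimed basis, and then verify the formula for the $\la$-action by pairing both sides against the $m_k$. For the first stage, the defining sesquilinearity $f_\la(\de_i m)=(-1)^{p_ip(f)}\lambda_i f_\la(m)$ shows that any $f\in M^*$ is completely determined by the values $g_i(\la):=f_\la(m_i)\in\inlinewedge[\la]$, and conversely any choice of the $g_i$ extends through this relation to a well-defined element of $M^*$ because $M$ is free on $\{m_i\}$. In particular each $m_i^*$, defined by $(m_i^*)_\la(m_k)=\delta_{ik}$, is a genuine element of $M^*$. Using the module structure $(\de_if)_\la(m)=-\lambda_if_\la(m)$ I would check that for $Q\in\inlinewedge[\deb]$ one has $(Q(\deb)m_i^*)_\la(m_k)=Q(-\la)\,\delta_{ik}$, where $Q(-\la)$ denotes the substitution $\de_i\mapsto-\lambda_i$ (well defined since this map respects the super-commutativity of the generators). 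Hence $f=\sum_i g_i(-\deb)\,m_i^*$ is the unique expansion of $f$, where $g_i(-\deb)$ substitutes $\lambda_i\mapsto-\de_i$; this proves simultaneously that the $m_i^*$ span $M^*$ and are linearly independent over $\inlinewedge[\deb]$, so $M^*$ is free of the same finite rank.

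For the second stage, since $\{m_j^*\}$ is a basis it suffices to show that $a_\la m_i^*$ and the claimed right-hand side agree after applying $(\,\cdot\,)_\mub(m_k)$ for every $k$. On the left the definition of the $\la$-action gives
\[(a_\la m_i^*)_\mub(m_k)=-(-1)^{p(a)p(m_i)}(m_i^*)_{\mub-\la}(a_\la m_k),\]
using $p(m_i^*)=p(m_i)$, which is forced by the evenness of the pairing $f\otimes m\mapsto f_\la(m)$ together with $(m_i^*)_\la(m_k)=\delta_{ik}$. I then substitute $a_\la m_k=\sum_l P_{kl}(\la,\deb)m_l$ and pull the operator $P_{kl}(\la,\deb)$ through $(m_i^*)_{\mub-\la}$; only the term $l=i$ survives, since $(m_i^*)_{\mub-\la}(m_l)=\delta_{il}$. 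On the right, evaluating $(P_{ji}(\la,-\deb-\la)m_j^*)_\mub(m_k)$ uses only the module structure, so $-\deb-\la$ acts by the uniform substitution $\de_j\mapsto-\mu_j$, turning $-\de_j-\lambda_j$ into $\mu_j-\lambda_j$, and only $j=k$ survives. Thus both sides reduce to the single polynomial $P_{ki}$ evaluated at $\mub-\la$ in its second slot, and the whole problem collapses to matching the accompanying sign factors.

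This sign bookkeeping is the main obstacle, and the key is the parity of $P_{ki}$, namely $p(a)+p(m_k)+p(m_i)$, which follows because $a_\la m_k$ is homogeneous of parity $p(a)+p(m_k)$. On the left two Koszul signs arise: pulling each odd $\de_j$ out of $(m_i^*)_{\mub-\la}$ via sesquilinearity contributes $(-1)^{p(m_i)}$, and extending the odd functional $(m_i^*)_{\mub-\la}$ over the coefficient ring $\inlinewedge[\la]$ forces each odd $\lambda$ to pass $m_i^*$, contributing a further $(-1)^{p(m_i)}$. For a monomial of $P_{ki}$ with $q_B$ odd $\de$'s and $q_A$ odd $\lambda$'s these combine to $(-1)^{(q_A+q_B)p(m_i)}=(-1)^{(p(a)+p(m_k)+p(m_i))p(m_i)}$; multiplying by the prefactor $(-1)^{p(a)p(m_i)}$ cancels the $p(a)$-contribution and leaves exactly $(-1)^{p(m_i)(p(m_k)+p(m_i))}$, which is the coefficient in the statement with $j=k$. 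I would therefore organize the argument so that, after these parity signs are absorbed, the substitution $\de_j\mapsto\mu_j-\lambda_j$ on the left and the one induced by $-\deb-\la$ on the right visibly coincide; the one genuinely delicate point is confirming that extending an odd functional over the odd part of $\inlinewedge[\la]$ produces precisely the sign $(-1)^{q_Ap(m_i)}$, which is what reconciles the parity-free module action with the parity-laden sesquilinearity.
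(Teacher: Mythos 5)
Your proposal is correct and follows essentially the same route as the paper: evaluate $(a_\la m_i^*)_\mub(m_k)=-(-1)^{p(a)p(m_i)}(m_i^*)_{\mub-\la}(a_\la m_k)$, observe that only the term $l=i$ survives, pull the coefficients $P_{ki}(\la,\deb)$ through the functional via sesquilinearity and extension of scalars (each odd generator contributing $(-1)^{p(m_i)}$, so the total sign is governed by the parity $p(a)+p(m_k)+p(m_i)$ of $P_{ki}$), and match against the basis expansion on the dual side. You merely flesh out the two steps the paper compresses into ``easy verification'' (freeness of $M^*$ on $\{m_i^*\}$) and ``the statement follows'' (the evaluation of the right-hand side under $\de_l\mapsto-\mu_l$), so the substance is the same.
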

\begin{proof}
The fact that $M^*$ is a free $\inlinewedge[\deb]$-module with basis $\{m_i^*\}$ is an easy verification.
By definition we have:
\begin{align*}
(a_{\la}m_i^*)_{\mub}(m_j)&=-(-1)^{p(a)p(m_i)}(m_i^*)_{\mub-\la}(a_{\la}m_j)=-(-1)^{p(a)p(m_i)}\sum_k(m_i)^*_{\mub-\la}
P_{jk}(\la,\deb)m_k\\
&=-(-1)^{p(a)p(m_i)}(m_i)^*_{\mub-\la}(P_{ji}(\la,\deb) m_i)\\
&=-(-1)^{p(m_i)(p(m_j)+p(m_i))}P_{ji}(\la,\mub-\la).
\end{align*}
The statement follows.
\end{proof}

\begin{remark} We point out that, by Proposition \ref{azioneduale}, the map
$$\varphi: (M^*)^*\rightarrow M,
\,\,(m_i^*)^*\mapsto (-1)^{p(m_i)}m_i$$
is an isomorphism of conformal $R$-modules, provided that $M$ is free and finitely generated as 
$\inlinewedge[\deb]$-module.
\end{remark}
\begin{remark}Let $M,N$ be free and finitely generated conformal $R$-modules and $T:M\rightarrow N$ be a conformal morphism. In \cite[Proposition 2.4]{BKLR} it is shown that if $R$ is of type $(1,0)$, $T$ is injective and $N/ImT$ is free as $\C[\de]$-module, then $T^*:N^*\rightarrow M^*$ is surjective and that the injectivity of $T$ is not sufficient.  The same argument applies also for $R$ of type $(r,s)$. 
On the other hand, it is easy to check that if $T$ is a surjective morphism of conformal modules then $T^*$ is always injective.
\end{remark}
Recall that we always assume that Assumptions \ref{assumption} on $R$ are satisfied, in particular,  ${\mathfrak g}(R)=\oplus_{j\geq -3}{\mathfrak g}(R)_j$ is a $\Z$-graded
Lie superalgebra of depth at most 3 with ${\mathfrak g}(R)_{-1}$ and
${\mathfrak g}(R)_{-3}$ purely odd, 
and ${\mathfrak g}(R)_{-2}$ identified with $D=\langle \de_{y_1},\ldots,\de_{y_{r+s}}\rangle$.

Let $F$ be a finite-dimensional ${\mathfrak g}(R)_{0}$-module  which we extend to
${\mathfrak g}(R)_{\geq 0}=\bigoplus_{j\geq 0}{\mathfrak g}(R)_{j}$ by letting
${\mathfrak g}(R)_{j}$, $j>0$, act trivially. We let
$$M(F)=\textrm{Ind}_{{\mathfrak g}(R)_{\geq 0}}^{{\mathfrak g}(R)}F$$
be the generalized Verma module, attached to $F$. Since, by our assumptions, 
$\inlinewedge[\deb_\yb]={\mathcal U}({\mathfrak g(R)}_{-2})$, 
and ${\mathfrak g}(R)_{-1}$ and ${\mathfrak g}(R)_{-3}$ are purely odd, we see that $M(F)$ is a free
$\inlinewedge[\deb_\yb]$-module of rank $2^n$, where $n=\dim(\mathfrak{g}(R)_{-1}+\mathfrak{g}(R)_{-3})$.

Let $\{d_1, \dots, d_n\}$ be a basis of $\mathfrak{g}(R)_{-1}\oplus \mathfrak{g}(R)_{-3}$, 
and  $\{v_1,\ldots,v_{\ell}\}$ be a basis of $F$.
For every $I\subset\{1,\dots,n\}$,
$I=\{i_1,\dots, i_k\}$ with $i_1<\dots<i_k$ we let $d_I=d_{i_1}\dots d_{i_k}\in \mathcal U(\mathfrak g(R))$.
We set $\Omega=\{1, \dots,n\}$. We observe that $\{d_Iv_h\,|\, I\subseteq \Omega,\,h\in \{1,\ldots,\ell\}\}$ is a basis of $M(F)$ as a free $\inlinewedge[\deb_\yb]$-module and we will denote by$\{(d_Iv_h)^*\,|\, I\subseteq \Omega,\,h\in \{1,\ldots,\ell\}\}$ the corresponding dual basis of $M(F)^*$ as a free $\inlinewedge[\deb_\yb]$-module.


\begin{lemma}\label{commutatore} Let $x\in \mathfrak{g}(R)_{\geq 0}$. Then for all $I\subseteq \Omega$ and all $k=1,\ldots,\ell$ we have 

\[
 xd_Iv_k\in \bigoplus_{J,h:\, |J|\leq |I|}\displaywedge [\deb_{\yb}] d_J v_h.
\]
\end{lemma}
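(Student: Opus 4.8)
The plan is to induct on $|I|$, moving $x$ rightward through the generators $d_i$ one at a time and controlling at each step the number of surviving factors from $\mathfrak g(R)_{-1}\oplus\mathfrak g(R)_{-3}$ (call it the \emph{length}, so that $d_Jv_h$ has length $|J|$). Since the claim is linear in $x$ and $d_Iv_k$ is homogeneous, I may assume $x\in\mathfrak g(R)_j$ for a fixed $j\geq 0$. The base case $|I|=0$ is immediate: $x\,v_k$ equals an element of $F=\bigoplus_h\C v_h$ if $j=0$ and vanishes if $j>0$, so the length stays $0$.

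For the inductive step I would write $I=\{i_1<\dots<i_m\}$, $d_I=d_{i_1}d_{I'}$ with $I'=I\setminus\{i_1\}$, and use the super-commutator identity in $\mathcal U(\mathfrak g(R))$:
\[
 x\,d_I v_k=[x,d_{i_1}]\,d_{I'}v_k+(-1)^{p(x)p(d_{i_1})}\,d_{i_1}\,(x\,d_{I'}v_k).
\]
In the first summand $[x,d_{i_1}]$ is homogeneous of degree $j-1$ or $j-3$, hence (as the depth is at most $3$) lies in $\mathfrak g(R)_{\geq -3}$. If it lands in $\mathfrak g(R)_{\geq 0}$ I apply the inductive hypothesis to $[x,d_{i_1}]d_{I'}v_k$ and get length $\leq|I'|=m-1$; if it lands in $\mathfrak g(R)_{<0}$ I reduce $[x,d_{i_1}]d_{I'}$ to the ordered basis directly, keeping length $\leq m$. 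For the second summand the inductive hypothesis gives $x\,d_{I'}v_k\in\bigoplus_{|J|\leq m-1}\inlinewedge[\deb_\yb]\,d_Jv_h$, so I am reduced to left-multiplying a standard element $c\,d_Jv_h$ by the single generator $d_{i_1}$.

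The technical core is thus the reordering claim: for $d_i\in\mathfrak g(R)_{-1}\oplus\mathfrak g(R)_{-3}$, every $c\in\inlinewedge[\deb_\yb]=\mathcal U(\mathfrak g(R)_{-2})$, and every $J,h$,
\[
 d_i\,(c\,d_Jv_h)\in\bigoplus_{|J'|\leq|J|+1}\inlinewedge[\deb_\yb]\,d_{J'}v_h.
\]
Applied with $|J|\leq m-1$ this gives length $\leq m=|I|$ and closes the induction. To prove it I would first move $d_i$ rightward past the coefficient $c$ and then straighten the resulting product of $d$'s into increasing order, using the bracket relations forced by depth $3$: the only nonzero brackets inside $\mathfrak g(R)_{<0}$ are $[\mathfrak g(R)_{-1},\mathfrak g(R)_{-1}]\subseteq\mathfrak g(R)_{-2}$ and $[\mathfrak g(R)_{-1},\mathfrak g(R)_{-2}]\subseteq\mathfrak g(R)_{-3}$, while $\mathfrak g(R)_{-2}=D$ is abelian and $\mathfrak g(R)_{-3}$ is central in $\mathfrak g(R)_{<0}$. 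Each elementary swap produces a sign, or a bracket in $\mathfrak g(R)_{-2}$ removing two $d$-factors, or a bracket in $\mathfrak g(R)_{-3}$ trading one $\mathfrak g(R)_{-1}$-factor and one factor of the coefficient for a single $\mathfrak g(R)_{-3}$-factor; the first two strictly lower the length and the third preserves it.

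The hard part will be exactly this length bookkeeping, and in particular the bracket $[\mathfrak g(R)_{-1},\mathfrak g(R)_{-2}]\subseteq\mathfrak g(R)_{-3}$: since it does not decrease the number of $d$-factors, I must verify that it can never increase it either, i.e. that pushing a single generator through a $\inlinewedge[\deb_\yb]$-coefficient and reordering raises the length by at most one. I also need to check that the straightening terminates (each reduction lowers either the length or the number of inversions), so that the process genuinely lands in the stated finite sum. Once this is in place, the depth constraint forces every other bracket to be length-decreasing and the induction on $|I|$ runs without further difficulty.
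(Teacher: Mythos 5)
Your proposal is correct and follows essentially the same route as the paper's proof: induction on $|I|$ via the super-commutator split $x\,d_Iv_k=[x,d_{i_1}]d_{I'}v_k\pm d_{i_1}(x\,d_{I'}v_k)$, a case analysis on the sign of $\deg[x,d_{i_1}]$ (negative odd gives a combination of the $d_i$'s, negative even lands in $\inlinewedge[\deb_\yb]$, nonnegative uses induction), and for the second summand the relation $d_i\de_{y_j}=\pm\de_{y_j}d_i+\sum_h a_hd_h$. Your ``reordering claim'' and the depth-forced bracket bookkeeping are exactly the straightening argument the paper compresses into its final one-line observation, so you have simply made explicit what the paper leaves implicit.
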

\begin{proof}
We proceed by induction on $|I|$. If $|I|=0$ the result is clear since $xv_k\in F$. If $|I|>0$ let $I=\{i_1,\ldots,i_t\}$. Then
\[
xd_{i_1}\cdots d_{i_t} v_k=[x,d_{i_1}]d_{i_2}\cdots d_{i_t}v_k+(-1)^{p(x)}d_{i_1}xd_{i_2}\cdots d_{i_t}v_k. 
\]
Consider the first summand. If $\deg[x,d_{i_1}]<0$ is odd then $[x,d_{i_1}]$ is a linear combination of the $d_i$'s and the result follows. If $\deg[x,d_{i_1}]<0$ is even then $[x,d_{i_1}]\in \mathcal \inlinewedge[\deb_{\yb}]$ and the result also follows. If $\deg[x,d_{i_1}]\geq 0$ we can apply our induction hypothesis. 

For the second summand we can also apply the induction hypothesis so that 
\[xd_{i_2}\cdots d_{i_t}v_k=\sum_{J',h:\, |J'|\leq t-1}P_{J',h}(\deb_{\yb}) d_{J'}v_h
\]
and the result follows observing that $d_i\de_{y_j}=\pm \de_{y_j}d_i +\sum a_h d_h$.\end{proof}
%
\begin{definition}\label{shifteddual} If $\mathfrak{g}$ is a  Lie superalgebra, $\varphi: \mathfrak g\rightarrow \mathfrak{gl}(V)$ is a representation of $\mathfrak{g}$ and  $x\mapsto \chi_x\in \C$ is a character of $\mathfrak{g}$, we let $\varphi^\chi:\mathfrak{g}\rightarrow \mathfrak{gl}(V)$ be given by
	\[\varphi^\chi(x)(v)=\varphi(x)(v)+\chi_x v.
	\]
	It is clear that $\varphi^\chi$ is still a representation and we call it the $\chi$-shift of $\varphi$.
In particular, if $V$ is any $\mathfrak g$-module we call the $\chi$-shifted dual of $V$ the $\chi$-shift of the dual representation $V^*$. More explicitly, if $\{v_1,\ldots,v_n\}$ is a basis of $V$ and $\{v_1^*,\ldots,v_n^*\}$ is the corresponding dual basis of $V^*$  the $\chi$-shifted action on $V^*$ is given by 	
 
\begin{equation}\label{shiftdual}x.v_h^*=\chi_x v_h^*-(-1)^{p(x)p(v_h)}\sum_{k}v_h^*(x.v_k)v_k^*,
\end{equation}
for all $x\in \mathfrak{g}$ and $h=1,\ldots,n$.

\end{definition}
Now let $U=U(\mathfrak g(R)_{<0})$ and observe that $U$ is a graded $\mathfrak g(R)_0$-module by adjoint action. Let $d=\deg d_\Omega$ and consider the homogeneous component of degree $d$ of $U$
\[
U_d=\bigoplus_{I\subseteq \Omega} \displaywedge(\deb_\yb)_{d-\deg d_I} d_I 
\]
and note that
\[
U^\circ_d=\bigoplus_{I\neq \Omega} \displaywedge(\deb_\yb)_{d-\deg d_I} d_I
\]
is a $\mathfrak g(R)_0$-submodule of codimension 1 by Lemma \ref{commutatore}. So we have that $U_d/U_d^\circ$ is a 1-dimensional representation of $\mathfrak g(R)_0$ and we denote by $x\mapsto \rho_x\in \C$ the corresponding character. In other words the character $\rho$ is uniquely determined by the condition
\begin{equation}\label{shift1}
[x,d_\Omega]=\rho_x d_\Omega \mod U^\circ_d.
\end{equation}

\begin{lemma} \label{rho}
	Let $x\in \mathfrak g(R)_0$. Then \[\rho_x=-\str (\ad(x)_{|\mathfrak g(R)_{-1}\oplus \mathfrak g(R)_{-3}}),\]
where $\str$ denotes the supertrace of an endomorphism of a vector superspace.
\end{lemma}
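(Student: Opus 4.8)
The plan is to compute $[x,d_\Omega]$ directly in the associative superalgebra $U$, exploiting that $\ad(x)$ acts as a derivation of $U$ and that $x$ is even. First I would record that, since $p(x)=0$, the super-Leibniz rule carries no signs, so
\[
[x,d_\Omega]=\sum_{i=1}^n d_1\cdots d_{i-1}\,[x,d_i]\,d_{i+1}\cdots d_n .
\]
Because $\ad(x)$ preserves the $\Z$-gradation, $[x,d_i]\in\mathfrak g(R)_{-1}\oplus\mathfrak g(R)_{-3}$ lies again in the span of the $d_j$, so I can write $[x,d_i]=\sum_j a_{ji}d_j$, where $(a_{ji})$ is the matrix of $\ad(x)|_{\mathfrak g(R)_{-1}\oplus\mathfrak g(R)_{-3}}$ in the basis $\{d_j\}$ (block-diagonal with respect to the splitting into degrees $-1$ and $-3$). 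Note that since $\ad(x)$ has degree $0$, every term below has total degree $d$, hence lies in $U_d$; the only question is whether its $d_I$-part has $I=\Omega$ or not.

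Next I would substitute and split the double sum $\sum_{i,j}a_{ji}\,d_1\cdots d_{i-1}d_j d_{i+1}\cdots d_n$ according to whether $j=i$ or $j\neq i$. The diagonal terms $j=i$ reproduce $d_\Omega$ exactly and contribute $\big(\sum_i a_{ii}\big)d_\Omega=\tr\!\big(\ad(x)|_{\mathfrak g(R)_{-1}\oplus\mathfrak g(R)_{-3}}\big)\,d_\Omega$. For $j\neq i$ the index $j$ already occurs among the factors $d_1,\dots,\widehat{d_i},\dots,d_n$, so the monomial contains the odd generator $d_j$ twice.

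The heart of the argument is to show that every such repeated-generator monomial lies in $U^\circ_d$. I would bring the two copies of $d_j$ together using the relations $d_jd_k=-d_kd_j+[d_j,d_k]$ in $U$. Each bracket $[d_j,d_k]$ is even of degree $\le -2$; in depth $3$ it is either an element of $\mathfrak g(R)_{-2}=D$ (hence absorbed into the $\displaywedge(\deb_\yb)$-part) or zero, and in either case it replaces two odd factors by at most one even factor, strictly lowering the number of $d$-factors. When the two copies become adjacent, $d_jd_j=\tfrac12[d_j,d_j]$ is again of degree $-2$ (if $d_j\in\mathfrak g(R)_{-1}$) or zero (if $d_j\in\mathfrak g(R)_{-3}$, since $\mathfrak g(R)_{-6}=0$). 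Thus after reduction each such term has the form $\displaywedge(\deb_\yb)\,d_I$ with $|I|<n$, i.e.\ $I\neq\Omega$, so it lies in $U^\circ_d$; the signs produced while sorting are irrelevant, as they do not change which PBW monomials appear.

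Combining the two cases gives $[x,d_\Omega]=\tr\!\big(\ad(x)|_{\mathfrak g(R)_{-1}\oplus\mathfrak g(R)_{-3}}\big)\,d_\Omega \bmod U^\circ_d$, so by \eqref{shift1} we conclude $\rho_x=\tr\!\big(\ad(x)|_{\mathfrak g(R)_{-1}\oplus\mathfrak g(R)_{-3}}\big)$. Finally, since $\mathfrak g(R)_{-1}\oplus\mathfrak g(R)_{-3}$ is purely odd and $\ad(x)$ is an even operator, its supertrace equals minus its trace, which yields $\rho_x=-\str\!\big(\ad(x)|_{\mathfrak g(R)_{-1}\oplus\mathfrak g(R)_{-3}}\big)$ as claimed. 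I expect the main obstacle to be the careful bookkeeping in the PBW reduction of the repeated-generator terms: specifically, verifying that every bracket arising in the sorting has degree at most $-2$, so that it cannot re-create a factor of degree $-1$ or $-3$, and hence that these terms genuinely fall into $U^\circ_d$ rather than contributing to the coefficient of $d_\Omega$.
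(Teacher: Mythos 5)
You follow the same route as the paper's own proof: expand $[x,d_\Omega]$ by the Leibniz rule (with no signs, $x$ being even), observe that the diagonal terms contribute $\bigl(\sum_i a_{ii}\bigr)d_\Omega=\tr\bigl(\ad(x)|_{\mathfrak g(R)_{-1}\oplus\mathfrak g(R)_{-3}}\bigr)d_\Omega$, show that every monomial containing a repeated odd generator lies in $U^\circ_d$, and convert the trace into minus the supertrace because the space is purely odd. You in fact give more detail than the paper on the one nontrivial point: the paper merely asserts that $d_{i_1}\cdots d_{i_t}\in U^\circ_d$ whenever an index repeats, while you justify this by PBW sorting, and the invariant you isolate in the body of your argument (a bracket among the $d_i$'s consumes two odd factors and produces at most one factor from $D$, so the number of $d$-type factors can never return to $n$) is the correct one.

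Two things need repair. The genuine gap: you assume $p(x)=0$ from the outset, but the lemma is stated for all $x\in\mathfrak g(R)_0$, and this space is in general not purely even --- for instance $\mathfrak g(RW(r,s))_0\cong\mathfrak{gl}(r,s)$ contains odd elements; the $\Z$-gradation of $\mathfrak g(R)$ is not assumed consistent with the parity. So the odd case must be addressed, as the paper does: if $p(x)=1$, then $\ad(x)$ sends the purely odd space $\mathfrak g(R)_{-1}\oplus\mathfrak g(R)_{-3}$ into even vectors of the same degrees, hence restricts to zero on it and the supertrace vanishes; on the other side $\rho_x=0$ because $[x,d_\Omega]$ and $d_\Omega$ have opposite parities and $U_d$, $U^\circ_d$ are parity-graded. (By linearity in $x$ these two cases suffice.) The second issue is your closing claim that every bracket produced during the sorting has degree at most $-2$ and so cannot re-create a factor of degree $-1$ or $-3$: this is false. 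When you move an element of $D=\mathfrak g(R)_{-2}$ leftward to absorb it into the $\inlinewedge[\deb_\yb]$-part, the brackets $[\de_{y_i},d_l]$ lie in $\mathfrak g(R)_{-3}$, so degree $-3$ factors do reappear. The proof survives anyway, by the invariant above: such a bracket trades one $\partial$-factor and one $d$-factor for a single $d$-factor, so the number of $d$-factors never increases, every fully reduced PBW monomial carries at most $n-2$ of them, and hence lies in $U^\circ_d$. With these two corrections your argument is complete and coincides in substance with the paper's.
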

\begin{proof}
	
Recall that $d_{\Omega}=d_1\cdots d_n$ where $\{d_1,\ldots,d_n\}$ is a basis of $\mathfrak g(R)_{-1}\oplus \mathfrak g(R)_{-3}$. If $n=0$ the result is trivial since $d_{\Omega}=1$. The result is also trivial if $p(x)=1$ since $\rho_x=0$ for parity reasons and $\ad(x)=0$ because  $\mathfrak g(R)_{-1}\oplus \mathfrak g(R)_{-3}$ is purely odd.

So we can assume that $p(x)=0$ and $n\geq 1$. Next observe that if $i_1,\ldots,i_t \in \{1,\ldots,n\}$ are such that $i_h=i_k$ for some $h\neq k$ then
$d_{i_1}\cdots d_{i_t}\in U^\circ _d$. So, if $[x,d_i]=\sum_j a_{ij}d_j$ we have
\begin{align*}
[x,d_{\Omega}]&=[x,d_1]d_2\cdots d_n+d_1 [x,d_2]d_3\cdots d_n+\cdots+d_1\cdots d_{n-1}[x,d_n]\\
&\equiv a_{11}d_1d_2\cdots d_n+ d_1(a_{22}d_2)d_3\cdots d_n+\cdots + d_1\cdots d_{n-1}[a_{nn}d_n] &\mod U^\circ_d\\
&\equiv (a_{11}+\cdots +a_{nn})d_{\Omega} &\mod U^\circ_d.
\end{align*}
Therefore $\rho_x=a_{11}+\cdots +a_{nn}=-\str (\ad(x)|_{\mathfrak g(R)_{-1}\oplus \mathfrak g(R)_{-3}})$.
\end{proof}	
We let $\{\de_{y_1}^*,\ldots,\de_{y_{r+s}}^*\}$ be the basis of $\mathfrak g(R)_{-2}^*$ dual to $\{\de_{y_1},\ldots,\de_{y_{r+s}}\} $. 
We let \[\pi:U=\bigoplus_I \displaywedge[\deb_\yb] d_I\rightarrow \displaywedge[\deb_\yb]d_{\Omega}\] be the natural projection. Analogously we have $M(F)=\oplus_I \inlinewedge [\deb]d_I F$ and we also denote by $\pi:M(F)\rightarrow \inlinewedge [\deb]d_\Omega F$ the corresponding projection. 
The following result is crucial in this paper.
\begin{theorem}\label{dualmodule78}  The subspace
$F_\Omega=span_{\C}\{(d_\Omega v_k)^* ~|~ k=1,\dots, \ell\}$ of $M(F)^*$ is a ${\mathfrak{g}(R)}_0$-module isomorphic to
the $\chi$-shifted dual of $F$, where the shift character is given by
\begin{equation}\label{shiftstr}\chi_x=\str(\ad(x)|_{\mathfrak g(R)_{<0}}),
\,\, x\in\mathfrak{g}(R)_0.\end{equation}
\end{theorem}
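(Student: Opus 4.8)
The plan is to prove the statement in three movements: first show that $F_\Omega$ is a $\mathfrak g(R)_0$-submodule of $M(F)^*$ by a grading argument; then compute the $\mathfrak g(R)_0$-action on the basis $(d_\Omega v_k)^*$ by combining the conformal duality formula of Proposition \ref{azioneduale} with the correspondence of Proposition \ref{modcorr}; and finally identify the resulting module with the $\chi$-shifted dual of Definition \ref{shifteddual} by pinning down the character. Throughout I use that $M(F)$ is free and finitely generated over $\inlinewedge[\deb_\yb]$, so that Proposition \ref{azioneduale} applies.

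For the submodule claim I would argue by degree. The conformal dual inherits a $\Z$-gradation in which $(d_Iv_h)^*$ sits in degree $-\deg d_I$ (since $F$ sits in degree $0$) and each $\de_{y_i}$ lowers the degree by $2$, while every $x\in\mathfrak g(R)_0$ preserves the degree. Writing $x.(d_\Omega v_k)^*=\sum_{J,h}Q_{J,h}(\deb_\yb)(d_Jv_h)^*$ and comparing degrees gives $2\deg_{\deb_\yb}Q_{J,h}=\sum_{i\in\Omega\setminus J}\deg d_i$, where $\deg_{\deb_\yb}$ denotes the $\deb_\yb$-degree. Since every $d_i\in\mathfrak g(R)_{-1}\oplus\mathfrak g(R)_{-3}$ has $\deg d_i<0$, the right-hand side is $\le 0$ with equality only when $J=\Omega$; as the left-hand side is $\ge 0$, we are forced to $J=\Omega$ and $Q_{\Omega,h}\in\C$. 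Hence $x.(d_\Omega v_k)^*\in F_\Omega$ and $F_\Omega$ is $\mathfrak g(R)_0$-stable.

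To compute the action, fix $x=\yb_Ka\in\mathfrak g(R)_0$; by Proposition \ref{modcorr} the operator $x$ on $M(F)^*$ is the coefficient of $(-1)^{p_K}\la_{\bar K}/f(K)$ in $a_\la$. By Proposition \ref{azioneduale} the $(d_\Omega v_h)^*$-component of $a_\la(d_\Omega v_k)^*$ is, up to an explicit parity sign, $P_{(\Omega,h),(\Omega,k)}(\la,-\deb_\yb-\la)$, where $P_{(\Omega,h),(\Omega,k)}(\la,\deb_\yb)$ is the $d_\Omega v_k$-coefficient of $a_\la(d_\Omega v_h)$. Expanding the latter through Proposition \ref{modcorr} as $\sum_{K'}(-1)^{p_{K'}}\frac{\la_{\bar{K'}}}{f(K')}(\yb_{K'}a).(d_\Omega v_h)$ and extracting the $d_\Omega v_k$-component, a degree count shows that $\yb_{K'}a$ contributes only when $\deg(\yb_{K'}a)$ is even and $\le 0$; since the depth is at most $3$, the only cases are $\yb_{K'}a\in\mathfrak g(R)_0$, giving a constant coefficient, and $\yb_{K'}a\in\mathfrak g(R)_{-2}$, giving a coefficient linear in $\deb_\yb$. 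The degree-$0$ part reproduces, via \eqref{shift1} and Lemma \ref{rho}, the scalar $\rho_x$ on the diagonal together with the matrix of $x$ on $F$ off the diagonal; a parity bookkeeping using $p(d_\Omega)=n$ and $p(d_\Omega v_k)=n+p(v_k)$ turns the off-diagonal part into exactly $-(-1)^{p(x)p(v_k)}$ times the matrix of $x$ on $F$, i.e. the dual action, so that the map $v_k^*\mapsto(d_\Omega v_k)^*$ identifies this part with $F^*$. The degree-$-2$ part is where the substitution $\deb_\yb\mapsto-\deb_\yb-\la$ is decisive: because $\de_{y_i}.(d_\Omega v_h)=\de_{y_i}d_\Omega v_h$ has $d_\Omega v_h$-coefficient equal to $\de_{y_i}$, this contribution is diagonal and becomes linear in $\la$ after the substitution; using $[\de_{y_i},a_\la]=\lambda_i a_\la$ it collapses to the supertrace $\str(\ad x|_{\mathfrak g(R)_{-2}})$. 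Collecting the two contributions, the character of $F_\Omega$ equals $-\rho_x+\str(\ad x|_{\mathfrak g(R)_{-2}})$, which by Lemma \ref{rho} and $\mathfrak g(R)_{<0}=\mathfrak g(R)_{-1}\oplus\mathfrak g(R)_{-2}\oplus\mathfrak g(R)_{-3}$ is precisely $\chi_x=\str(\ad x|_{\mathfrak g(R)_{<0}})$ of \eqref{shiftstr}; together with the off-diagonal identification this makes $v_k^*\mapsto(d_\Omega v_k)^*$ an isomorphism from the $\chi$-shifted dual of $F$ onto $F_\Omega$.

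The step I expect to be the genuine obstacle is the consistent tracking of the super-signs in the degree-$-2$ contribution: one must verify that the interplay of the signs $(-1)^{p_K}$, the factors $f(K)$, the signs $\eta_J$ of Lemma \ref{lem1}, and the identity $[\de_{y_i},a_\la]=\lambda_i a_\la$ produces the supertrace $\sum_i(-1)^{p_i}$ and not an ordinary trace, and that the prefactor in the off-diagonal term matches $-(-1)^{p(x)p(v_k)}$ on the nose. By contrast, the grading argument for stability and the reduction of the relevant contributions to the degrees $0$ and $-2$ are comparatively routine.
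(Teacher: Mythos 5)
Your proposal is correct and, at its computational core, it coincides with the paper's proof: both combine Proposition \ref{azioneduale} with Proposition \ref{modcorr}, observe that only $\yb_{K'}a\in\mathfrak{g}(R)_0$ and $\yb_{K'}a\in\mathfrak{g}(R)_{-2}$ can contribute to the $d_\Omega v_k$-coefficient, feed the degree-zero contribution through \eqref{shift1} and Lemma \ref{rho}, and let the substitution $\deb\mapsto-\deb-\la$ convert the degree-$(-2)$ contribution into $\str(\ad(x)|_{\mathfrak{g}(R)_{-2}})$; the totals you predict (diagonal part $-\rho_x+\str(\ad(x)|_{\mathfrak{g}(R)_{-2}})=\chi_x$, off-diagonal part equal to the dual action with prefactor $-(-1)^{p(x)p(v_h)}$) are exactly what the paper's formulas \eqref{lambda-action} and \eqref{gzeroaction} produce. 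The genuine difference is how the unwanted terms are ruled out: the paper proves and uses Lemma \ref{commutatore} (the action of $\mathfrak{g}(R)_{\geq 0}$ does not increase the number of $d_i$-factors), whereas you use degree counts. Your degree count on the $M(F)$ side (second movement) is sound, since $M(F)$ is manifestly graded; what it buys is that Lemma \ref{commutatore} becomes unnecessary. However, your first movement asserts without proof that the conformal dual carries a gradation, preserved by $\mathfrak{g}(R)_0$, with $(d_Iv_h)^*$ in degree $-\deg d_I$ and $\de_i$ of degree $-2$; note that $M(F)^*$ is not the naive graded dual of $M(F)$ (the functional $(d_Iv_h)^*$ pairs nontrivially with all of $\inlinewedge[\deb]\,d_Iv_h$), so gradedness of the dual action is itself a claim that must be checked. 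It does follow routinely by tracking the bidegrees of the polynomials $P_{I,k;J,h}$ through Proposition \ref{azioneduale}, and it can also be bypassed entirely: run your second-movement degree count on $P_{J,k;\Omega,h}$ for arbitrary $J$ to see that it forces $\deg(\yb_Ma)=-2$ when $J\neq\Omega$, then use that $\mathfrak{g}(R)_{-2}$ acts by multiplication by the $-\de_i$ (coherence), so such terms never change the index $J$; Proposition \ref{azioneduale} then gives stability of $F_\Omega$ directly, which is precisely the paper's route. Finally, you flag the super-sign bookkeeping as the real labour without executing it; that is a fair self-assessment, and it is exactly where the paper spends its effort, but since your outline invokes the correct mechanisms (Lemma \ref{rho}, \eqref{shift1}, the substitution $\deb\mapsto-\deb-\la$, and the coherence identity $[\de_{y_i},a_\la]=\lambda_i a_\la$) and your predicted signs agree with \eqref{gzeroaction} and \eqref{shiftdual}, nothing in your plan would fail when the details are filled in.
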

\begin{proof}  
Recall that for all $a\in R$ we have
\begin{equation}
 {a}_\la (d_Iv_k) = \sum_{M\in S_{r,s}} (-1)^{p_M}\frac{\la_{\bar M}}{f(M)}(\yb_Ma) (d_Iv_k)=\sum_{J,h}P_{I,k;J,h}(\la,\deb)d_Jv_h
\label{lambda-actionpasso0}
\end{equation}
for some polynomials $P_{I,k;J,h}(\la,\deb)$. Let $\yb_Sa\in \mathfrak g(R)_0$ and observe that $\deg a\leq 0$ is even. We make use of \eqref{lambda-actionpasso0} to compute the action of $\yb_ Sa$ on  $F_\Omega$.  We point out that $P_{I,k;\Omega,h}(\la,\deb)\neq 0$ only if $I=\Omega$. Indeed, if $\yb _Ma\in \mathfrak g(R)_{<0}$  then $\yb_Ma\in \mathfrak{g}(R)_{-2}$  and so we have
$\yb_Ma=\sum \de_{y_i}^*(\yb_Ma)\partial_{y_i}$ hence
\[
(\yb_Ma) (d_Iv_k)=\sum \de_{y_i}^*(\yb_Ma)\partial_{y_i}d_Iv_k=-\sum \de_{y_i}^*(\yb_Ma)\partial_{i}d_Iv_k;
\] 
if $\yb_Ma\in \mathfrak{g}(R)_{\geq 0}$ this follows by applying Lemma \ref{commutatore}.



In order to compute  the polynomials $P_{\Omega,k;\Omega,h}(\la, \deb)$ we notice that by \eqref{lambda-actionpasso0} and Lemma \ref{commutatore} we have
\begin{align*}
 \sum_h P_{\Omega,k;\Omega,h}(\la,\deb )d_\Omega v_h&=\sum_{M\in S_{r,s}} (-1)^{p_M}\frac{\la_{\bar M}}{f(M)}\pi((\yb_Ma)d_\Omega v_k)
 \\&=\sum_{M:\, \deg(\yb _Ma)=0,-2}(-1)^{p_M}\frac{\la_{\bar M}}{f(M)}\pi((\yb_Ma)d_\Omega v_k).
 \end{align*}
 It follows that
 \begin{align*}
 \sum_h P_{\Omega,k;\Omega,h}(\la,\deb)d_\Omega v_h=&\sum_{M:\deg \yb_Ma=-2}(-1)^{p_M} \frac{\la_{\bar M}}{f(M)} 
 \sum_i \de_{y_i}^*(\yb_Ma) \de_{y_i}d_\Omega v_k\\&+\sum _{N:\deg \yb_Na=0}(-1)^{p_N}\frac{\la_{\bar{N}}}{f(N)}\Big(\rho_{\yb_Na}v_k+(-1)^{p(d_\Omega)p(\yb_N a)}d_\Omega (\yb_Na).v_k\Big).
 \end{align*}

Substituting $(\yb_Na). v_k=\sum_h v_h^*((\yb_Na). v_k)v_h$ in the previous formula we deduce that

\begin{align*}P_{\Omega,k;\Omega,h}(\la,\deb)=&\delta_{k,h}\Big(-\sum_{M:\deg \yb_Ma=-2}\sum_{i} (-1)^{p_M}\frac{\la_{\bar M}}{f(M)}\de_{y_i}^*(\yb_Ma)\de_i+\sum_{N: \deg \yb _Na=0}(-1)^{p_N}\frac{\la_{\bar N}}{f(N)}\rho_{\yb_N a}\Big)\\&+
(-1)^{p(d_\Omega)p(\yb _Na)}\sum_{N: \deg \yb _Na=0 }(-1)^{p_N}\frac{\la_{\bar N}}{f(N)}
v_h^*(\yb_Nav_k).
\end{align*}

By Proposition \ref{azioneduale} it follows that
\begin{align}\label{lambda-action}
{a}&_\la(d_\Omega v_h)^*  =\\
\nonumber &\sum_{M:\, \deg \yb_Ma=-2}(-1)^{p_M}\frac{\la_{\bar M}}{f(M)}\sum_{i}\de_{y_i}^*(\yb_Ma)(-\de_i-\lambda_i)(d_{\Omega} v_h)^* - \sum_{N:\, \deg \yb _N a=0}(-1)^{p_N}\frac{\la_{\bar N}}{f(N)}\rho_{\yb_N a}(d_{\Omega} v_h)^*\\
\nonumber & -\sum_k\sum_{N: \deg \yb _Na=0}(-1)^{p_N}\frac{\la_{\bar N}}{f(N)}(-1)^{p(d_\Omega v_h)(p(d_\Omega v_h)+p(d_\Omega v_k))}(-1)^{p(d_\Omega)p(\yb _N a)}v_h^*(\yb_N a\,v_k)(d_{\Omega}v_k)^*.
\end{align}
Observe that in each summand of the last sum we have a factor $v_h^*(\yb_N a\, v_k)$ so that we can assume $p(v_h)+p(v_k)=p(\yb _N a)$ in order to simplify the sign
\[
(-1)^{p(d_\Omega v_h)(p(d_\Omega v_h)+p(d_\Omega v_k))}(-1)^{p(d_\Omega)p(\yb _N a)}=(-1)^{p(v_h)p(\yb_Na)}.
\]
Now recall that the action of $\yb_Sa\in\mathfrak{g}(R)_0$ on $(d_\Omega v_h)^*$ can be obtained from  (\ref{lambda-action}) thanks to Proposition \ref{modcorr}. This immediately implies that
 $F_\Omega$ is a $\mathfrak{g}(R)_0$-submodule
of $M(F)^*$. More precisely, for $\yb_Sa\in \mathfrak{g}(R)_0$ we have:
\begin{align}\label{gzeroaction}
\yb_Sa.(d_\Omega v_h)^*=&\Big(-\sum_{i,M:\, iM\sim S}(-1)^{p_i}\varepsilon_{(i,M;S)}\,m_i(S)\de_{y_i}^*(\yb_Ma)-\rho_{\yb_Sa}\Big) (d_\Omega v_h)^*\\
\nonumber&-(-1)^{p(v_h)p(\yb _S a)}\sum_kv_h^*(\yb_Sa\,v_k)(d_{\Omega}v_k)^*.
\end{align}
Recalling that $iM\sim S$ implies $y_i \yb_M=\varepsilon_{i,M;S}\yb _S$ we have $\de_{y_i}(\yb _S)= \varepsilon_{i,M;S} m_i(S)\yb _M$ and so we have
\begin{align*}
\yb_Sa.(d_\Omega v_h)^* &=\big( -\sum_{i} (-1)^{p_i}\de_{y_i}^*(\de_{y_i}\yb _S a)-\rho_{\yb _S a}\big)(d_\Omega v_h)^*-(-1)^{p(v_h)p(\yb _S a)}\sum_kv_h^*(\yb_Sa\,v_k)(d_{\Omega}v_k)^*\\
&=\big( \sum_{i} (-1)^{p_i}\de_{y_i}^*[\yb_Sa,\de_{y_i}] -\rho_{\yb _S a}\big)(d_\Omega v_h)^*-(-1)^{p(v_h)p(\yb _S a)}\sum_kv_h^*(\yb_Sa\,v_k)(d_{\Omega}v_k)^*\\
&=\str\big (\ad({\yb_S a})_{|\mathfrak g(R)_{<0}}\big)(d_\Omega v_h)^*-(-1)^{p(v_h)p(\yb _S a)}\sum_kv_h^*(\yb_Sa\,v_k)(d_{\Omega}v_k)^*,
\end{align*}
by Lemma \ref{rho}.
The result follows by  \eqref{shiftdual}.
\end{proof}


\begin{proposition}\label{sing} The $\mathfrak g(R)_0$-module $F_\Omega$ is annihilated by $\mathfrak g(R)_{>0}$.
\end{proposition}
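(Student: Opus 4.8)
The plan is to show that for every homogeneous $x\in\mathfrak g(R)_{>0}$ and every basis vector $v_h$ of $F$, the action of $x$ on the dual vector $(d_\Omega v_h)^*$ vanishes. Since $F_\Omega=\mathrm{span}_{\mathbb F}\{(d_\Omega v_k)^*\}$ has already been shown in Theorem~\ref{dualmodule78} to be a $\mathfrak g(R)_0$-submodule of $M(F)^*$, the natural approach is to read off the action of an arbitrary $\mathbf y_S a\in\mathfrak g(R)_{>0}$ directly from the $\lambda$-action formula \eqref{lambda-action}, exactly as was done for $\mathfrak g(R)_0$ at the end of the proof of Theorem~\ref{dualmodule78}. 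The key structural point is that formula \eqref{lambda-action} was derived by analyzing the polynomials $P_{\Omega,k;\Omega,h}(\la,\deb)$, which in turn depend only on those summands $\mathbf y_M a$ of $a_\la$ whose degree is $0$ or $-2$; for an element of $\mathfrak g(R)_{>0}$ these summands simply do not contribute.

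First I would fix $x=\mathbf y_S a\in\mathfrak g(R)_{>0}$, so that $\deg(\mathbf y_S a)>0$. By Proposition~\ref{modcorr}, the action of $x$ on $(d_\Omega v_h)^*$ is recovered from the coefficient of the appropriate monomial $\la_{\bar S}$ in the $\lambda$-action $a_\la(d_\Omega v_h)^*$ given by \eqref{lambda-action}. Now inspect the three sums in \eqref{lambda-action}: the first runs over $M$ with $\deg(\mathbf y_M a)=-2$, and the second and third run over $N$ with $\deg(\mathbf y_N a)=0$. Using $\deg(\mathbf y_K a)=\deg(a)+2\,\ell(K)$, the monomial $\la_{\bar S}$ extracted by $x$ can arise only from a summand with $\ell$-index equivalent to $S$, i.e.\ of the same length and degree as $\mathbf y_S a$. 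The crucial obstacle to address is therefore purely a degree bookkeeping: one must verify that $\deg(\mathbf y_S a)>0$ is genuinely incompatible with the constraints $\deg=0$ and $\deg=-2$ appearing in the index sets of \eqref{lambda-action}, so that the coefficient of $\la_{\bar S}$ picked out by $x$ is zero in every one of the three sums.

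To make this precise I would argue as follows. Since $\mathbf y_S a$ is homogeneous of positive degree, any $\mathbf y_M a$ with $M\sim S$ has $\deg(\mathbf y_M a)=\deg(\mathbf y_S a)>0$, which excludes $M$ from the first sum (requiring $\deg=-2$) and from the second and third sums (requiring $\deg=0$). More carefully, the coefficient of $(d_\Omega v_h)^*$ or $(d_\Omega v_k)^*$ in \eqref{lambda-action} that contributes to the $x$-action is, by the mechanism of Proposition~\ref{modcorr} used in \eqref{gzeroaction}, a sum of terms indexed by sequences equivalent to $S$; each such term carries a factor $\de_{y_i}^*(\mathbf y_M a)$ or $\rho_{\mathbf y_N a}$ or $v_h^*(\mathbf y_N a\,v_k)$ coming from a component of $a_\la$ of degree $-2$ or $0$, none of which is equivalent to $S$ under $\sim$. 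Hence every contribution vanishes and $x.(d_\Omega v_h)^*=0$.

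The one point deserving care, and the main obstacle, is confirming that the extraction of the $\la_{\bar S}$-coefficient in Proposition~\ref{modcorr} cannot mix a low-degree component of $a_\la$ with the $\deb_\yb$-polynomial coefficients (which live in $\inlinewedge[\deb_\yb]=\mathcal U(\mathfrak g(R)_{-2})$ and themselves carry degree) so as to reconstruct the positive degree of $\mathbf y_S a$. This is ruled out because the dual basis $(d_\Omega v_h)^*$ is taken over the ring $\inlinewedge[\deb_\yb]$, and the grading on $M(F)^*$ is compatible with the $\mathfrak g(R)$-action; a positive-degree element maps the degree-$d$ top component $d_\Omega F$ strictly upward, where the dual basis vectors $(d_\Omega v_k)^*$ do not live. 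I would therefore conclude by a clean grading argument: $x\cdot F_\Omega$ lies in the component of $M(F)^*$ of degree $\deg(x)+\deg F_\Omega$, while $F_\Omega$ itself sits in degree $\deg F_\Omega$; since $x$ has positive degree and $F_\Omega$ is the image of the top component under dualization, the only possibility consistent with \eqref{lambda-action} is that the action is zero.
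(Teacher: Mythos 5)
There is a genuine gap: your whole coefficient-extraction argument rests on applying formula \eqref{lambda-action} to an \emph{arbitrary} $a\in R$, but that formula is only valid when $\deg a$ is even and non-positive. This restriction is explicit in the proof of Theorem \ref{dualmodule78} (``Let $\yb_Sa\in\mathfrak g(R)_0$ and observe that $\deg a\leq 0$ is even''), and it is used essentially there: the step ``$P_{I,k;\Omega,h}\neq 0$ only if $I=\Omega$'' relies on the fact that a \emph{negative even} degree element $\yb_Ma$ must lie in $\mathfrak g(R)_{-2}$. If instead $\deg a=-2g+1$ is odd, then $\yb_Ma$ with $\ell(M)=g-1,g-2$ lies in $\mathfrak g(R)_{-1}\oplus\mathfrak g(R)_{-3}=\langle d_1,\ldots,d_n\rangle$, and such elements produce nonzero polynomials $P_{\Omega\setminus\{i\},k;\Omega,h}$. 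Consequently, for odd negative $\deg a$ the three sums of \eqref{lambda-action} are all empty and the formula would predict $a_\la(d_\Omega v_h)^*=0$, which is false: the correct expression (computed in the paper's proof of Proposition \ref{sing}) is
\[
a_{\la}(d_{\Omega}v_{h})^*=-\sum_{\ell(M)=g-1,g-2}\sum_i \pm\,\alpha_{M,i}(a)\frac{\la_M}{M!}\,(d_{\Omega\setminus\{i\}}v_h)^*,
\]
which is nonzero in general (in $RE(5,10)$, for instance, $b_k\yb^M$ with $\ell(M)=1$ has degree $-1$ and moves $F_\Omega$ down, it does not kill it). So your argument, applied verbatim, proves a false intermediate statement, and it is precisely the elements $\yb_Sa\in\mathfrak g(R)_{>0}$ with $\deg a$ odd (e.g.\ those of degree $1$ or $3$) that it fails to cover. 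The paper closes exactly this case by the explicit computation above: since the only $\la$-monomials occurring have length $g-1$ or $g-2$, the coefficient extraction of Proposition \ref{modcorr} at any length $\ell(M)\geq g$ (i.e.\ at positive degree) gives zero.

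Your closing ``clean grading argument'' is the germ of a correct (and in fact uniform) alternative: one can show that the conformal dual of a graded conformal module is again graded compatibly with the $\mathfrak g(R)$-action via Proposition \ref{modcorr}, and that each $(d_Iv_k)^*$ is homogeneous of degree $-\deg d_I$, so that $M(F)^*$ lives in degrees between $0$ and $-\deg d_\Omega$ with top component exactly $F_\Omega$; then $\mathfrak g(R)_{>0}\cdot F_\Omega=0$ for pure degree reasons, with no case analysis at all. But as written this is not a proof: neither the gradedness of the dual action nor the homogeneity and boundedness claims are established (they do not follow from Theorem \ref{main}, which would be circular since its proof uses Proposition \ref{sing}), and your formulation still appeals to ``consistency with \eqref{lambda-action}'', i.e.\ to the formula whose applicability is the problem. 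Either develop the grading lemma independently, or follow the paper and treat the odd-degree case by the direct computation of the $P_{I,k;\Omega,h}$.
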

\begin{proof}
By the same argument used in the proof of Theorem \ref{dualmodule78} if the degree of $a$ is positive then $a_{\la}(d_{\Omega}v_{h})^*=0$; moreover, 
if $a$ has even non-positive degree then $\yb_Ma.(d_{\Omega}v_{h})^*=0$ by (\ref{lambda-action}).

Now let us assume $\deg(a)=-2g+1$ for some positive integer $g$. If $\ell(M)=g-1,g-2$ then we can write $\yb_Ma=\sum_{i=1}^n\alpha_{M,i}(a)d_i$ for some
$\alpha_{M,i}(a)\in\C$. 
From (\ref{lambda-actionpasso0}) we obtain
$$P_{I,k;\Omega,h}(\la,\deb)=\begin{cases}
\displaystyle \sum_{\ell(M)=g-1,g-2}(-1)^{i-1}\alpha_{M,i}(a)\frac{\la^M}{M!}\delta_{kh} & ~~{\mbox{if}}~~ I=\Omega \setminus \{i\},  {\mbox{for some}}~~ 1\leq i\leq n,\\
0 & ~~{\mbox{otherwise.}}
\end{cases}
$$

It follows that 
\begin{align*}
a_{\la}(d_{\Omega}v_{h})^*&=-\sum_{I,k}(-1)^{p(d_\Omega v_h)(p(d_\Omega v_h)+p(d_I v_k))}P_{I,k;\Omega,h}(\la, -\deb-\la)(d_Iv_k)^*\\
&=-\sum_{\ell(M)=g-1,g-2} \sum_i(-1)^{p(d_\Omega v_h)(p(d_\Omega v_h)+p(d_{\Omega\setminus\{i\}} v_k))} (-1)^{i-1} \alpha_{M,i}(a)\frac{\la_M}{M!}(d_{\Omega \setminus\{i\}}v_h)^*.
\end{align*}
It follows that $\yb_Ma.(d_{\Omega}v_{h})^*=0$ for $\ell(M)\geq g$.
\end{proof}
For the 1-dimensional  $\mathfrak g(R)_0$-module $\chi$ given by \eqref{shiftstr}, we denote the $\chi$-shifted dual of   $F$ by $F^\vee$.
The following is our main result.
\begin{theorem}\label{main} Let $R$ be a Lie conformal superalgebra of type $(r,s)$ satisfying Assumptions \ref{assumption}. Let $F$ be a finite-dimensional $\mathfrak{g}(R)_0$-module. Then $M(F)^*$ is isomorphic to $M(F^\vee)$ as a $\mathfrak g(R)$-module.
\end{theorem}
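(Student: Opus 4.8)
The plan is to construct a single natural morphism $\Phi\colon M(F^\vee)\to M(F)^*$ and then verify it is bijective; all the genuinely new content is already contained in Theorem~\ref{dualmodule78} and Proposition~\ref{sing}, so the argument for Theorem~\ref{main} is essentially formal. By Theorem~\ref{dualmodule78} the subspace $F_\Omega=\mathrm{span}_\C\{(d_\Omega v_k)^*\mid k=1,\dots,\ell\}$ is a $\mathfrak{g}(R)_0$-submodule of $M(F)^*$ isomorphic to $F^\vee$, and by Proposition~\ref{sing} it is annihilated by $\mathfrak{g}(R)_{>0}$; hence $F_\Omega$ is a $\mathfrak{g}(R)_{\geq 0}$-submodule of $M(F)^*$ isomorphic to $F^\vee$ (with $\mathfrak{g}(R)_{>0}$ acting by zero). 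First I would invoke the universal property of the induced module, i.e.\ Frobenius reciprocity
\[
\mathrm{Hom}_{\mathfrak{g}(R)}\bigl(\mathrm{Ind}_{\mathfrak{g}(R)_{\geq 0}}^{\mathfrak{g}(R)}F^\vee,\,M(F)^*\bigr)\;\cong\;\mathrm{Hom}_{\mathfrak{g}(R)_{\geq 0}}\bigl(F^\vee,\,M(F)^*\bigr),
\]
to promote the composite $F^\vee\xrightarrow{\sim}F_\Omega\hookrightarrow M(F)^*$ to a unique $\mathfrak{g}(R)$-module homomorphism $\Phi$.

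Next I would reduce the bijectivity of $\Phi$ to its surjectivity. Since $\mathcal U(\mathfrak{g}(R)_{-2})=\inlinewedge[\deb_\yb]$ and $\mathfrak{g}(R)_{-1}$, $\mathfrak{g}(R)_{-3}$ are purely odd, both $M(F^\vee)$ and $M(F)^*$ are free $\inlinewedge[\deb_\yb]$-modules of the same rank $2^n\ell$, with bases $\{d_I w_h\}$ and $\{(d_I v_h)^*\}$ respectively ($I\subseteq\Omega$, $\{w_h\}$ a basis of $F^\vee$). Regarding these as free modules over the genuine commutative Noetherian ring $\C[\de_{y_1},\dots,\de_{y_r}]$, over which $\inlinewedge[\deb_\yb]$ is free of rank $2^s$, they have the same finite rank $2^{n+s}\ell$; and a surjective linear map between free modules of equal finite rank over a commutative ring is automatically an isomorphism. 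Thus it remains only to prove that $\Phi$ is onto.

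Finally, since $F_\Omega$ is $\mathfrak{g}(R)_{\geq 0}$-stable, the PBW theorem gives $\mathrm{Im}\,\Phi=\mathcal U(\mathfrak{g}(R))F_\Omega=\mathcal U(\mathfrak{g}(R)_{<0})F_\Omega$, and because acting by $\mathfrak{g}(R)_{-2}=D$ reproduces the $\inlinewedge[\deb_\yb]$-module structure (coherence of $M(F)^*$), surjectivity amounts to showing that every basis vector $(d_I v_h)^*$ lies in $\mathcal U(\mathfrak{g}(R)_{<0})F_\Omega$. I would prove this by downward induction on $|I|$: the top case $I=\Omega$ is exactly $F_\Omega$, and for $|I|<n$ I would choose $i_0\in\Omega\setminus I$ and compute, via Proposition~\ref{azioneduale}, the action of the generator $d_{i_0}$ of $\mathfrak{g}(R)_{-1}\oplus\mathfrak{g}(R)_{-3}$ on $(d_{I\cup i_0}v_h)^*$. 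Expanding $d_{i_0}\cdot d_J v_h$ in $M(F)$, the leading term is $\pm d_{J\cup i_0}v_h$ (with unit coefficient), while the only corrections come from commutators $[d_{i_0},d_{j_a}]\in\mathfrak{g}(R)_{-2}$ and thus raise the number of $d$-factors; dualizing, this forces
\[
d_{i_0}\cdot(d_{I\cup i_0}v_h)^*=\pm\,(d_I v_h)^*+(\text{terms }(d_J v)^*\text{ with }|J|>|I|).
\]
Since the correction terms lie in $\mathrm{Im}\,\Phi$ by the inductive hypothesis and the leading coefficient is $\pm 1$, we recover $(d_I v_h)^*\in\mathrm{Im}\,\Phi$, completing the induction. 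The main obstacle is this last computation: one must carry the conformal and super signs of the dual action through Proposition~\ref{azioneduale} carefully enough to confirm both that the diagonal coefficient is a unit and that all corrections strictly increase $|J|$, as only then does the triangular induction close.
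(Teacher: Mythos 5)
Your overall architecture coincides with the paper's: construct $\Phi\colon M(F^\vee)\to M(F)^*$ by the universal property of induction from the $\mathfrak g(R)_{\geq 0}$-isomorphism $F^\vee\cong F_\Omega$ (Theorem \ref{dualmodule78} plus Proposition \ref{sing}), reduce bijectivity to surjectivity using that both sides are free $\inlinewedge[\deb]$-modules of the same rank, and prove surjectivity by reverse induction on $|I|$, stepping from $(d_{I\cup\{i_0\}}v_h)^*$ down to $(d_Iv_h)^*$ via a triangularity property of the action of a negative odd-degree element. Your justification of ``surjective plus equal rank implies bijective'' (passing to the commutative subring $\C[\de_{y_1},\dots,\de_{y_r}]$ and using that a surjective endomorphism of a finitely generated module over a commutative ring is injective) correctly fills in a step the paper only asserts.

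There is, however, a genuine gap at the crucial step, and the heuristic you offer there would not survive being made precise. You obtain the displayed formula for $d_{i_0}\cdot(d_{I\cup\{i_0\}}v_h)^*$ by expanding $d_{i_0}\cdot d_Jv_h$ in $M(F)$ and ``dualizing'', with corrections attributed solely to the commutators $[d_{i_0},d_{j_a}]\in\mathfrak g(R)_{-2}$. This misrepresents how the dual action is actually computed. First, Proposition \ref{azioneduale} applies to elements $a\in R$, not to elements of $\mathfrak g(R)$, and it produces $a_\la$ on the dual basis via the substitution $\deb\mapsto-\deb-\la$ in the matrix $P_{I,k;J,h}$; the action of an individual element $\yb_Na\in\mathfrak g(R)$ is then recovered by extracting the coefficient of $(-1)^{p_N}\la_{\bar N}/f(N)$ (Proposition \ref{modcorr}). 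Because of the shift $\deb\mapsto-\deb-\la$, that coefficient receives contributions from matrix entries attached to \emph{all} components $\yb_Ma$ of the $\la$-expansion of $a$ --- in particular from the components of positive degree acting on basis vectors $d_Lv_k$ with smaller $|L|$ --- so the dual action of $d_{i_0}$ is not the signed transpose of its action on $M(F)$, and the corrections are not only the commutator ones. The paper's Equation \eqref{hkhk}, which rests on Lemma \ref{commutatore} and on the explicit determination of $P_{I,k;J,h}$ for $a$ of negative odd degree, is precisely the verification that the diagonal coefficient is $\pm 1$ and that every extra contribution lands in $\sum_{|L|>|J|}\inlinewedge[\deb](d_Lv_k)^*$. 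Second, even to apply these propositions one must first write the abstract basis vector $d_{i_0}$ in the form $\yb_Na$ for a \emph{single} $a\in R$: a priori it is only a finite sum $\sum_r\yb_{M_r}a_r$, and the paper needs a separate argument (taking $\yb_N$ a nonzero common multiple of the $\yb_{M_r}$ and absorbing the discrepancies into $\deb$-derivatives of the $a_r$) to reduce to one term. You flag all of this as ``the main obstacle'' and leave it unverified; since it is exactly the non-formal core of the theorem --- everything else being, as you say, formal --- the proposal as written does not amount to a proof.
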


\begin{proof} We use the same notation as above and consider the basis $\{v_1^*,\dots, v_\ell^*\}$ of $F^\vee$ dual to the basis
	$\{v_1,,\dots,v_\ell\}$. By Theorem \ref{dualmodule78} and Proposition \ref{sing}, we can define a morphism $\varphi: M(F^\vee)\rightarrow M(F)^*$ of $\mathfrak g(R)$-modules by extending the $\mathfrak g (R)_{\geq 0}$-modules isomorphism $\varphi:F^\vee\rightarrow F_\Omega$ given by $\varphi(v_k^*)=(d_\Omega v_k)^*$ for all $k=1,\ldots,\ell$, in the following natural way:
	\[\varphi(u \otimes v)=(-1)^{p(u)p(d_\Omega)}u\varphi (v)
	\]
	for all $u\in U(\mathfrak g(R))$ and $v\in F^\vee$.
	
	Since $M(F^\vee)$ and $M(F)^*$ are free $\inlinewedge[\deb]$-modules of the same rank, we will prove that $\varphi$ is in fact an isomorphism by
	showing that it is  surjective. To this aim it is sufficient to show that the $\mathfrak g(R)$-submodule $S$ of $M(F)^*$ generated by
	$F_\Omega$ is the whole $M(F)^*$. 
	Let $a\in R$ be an element of negative odd degree, say $\deg a=-2g+1$. Recall that by Lemma \ref{commutatore} if $\deg (\yb_Ma)>0$ we have 
	\[\yb_Ma\, d_Iv_k\in \bigoplus_{J,h:\, |J|<|I|}\displaywedge[\deb_\yb] d_Jv_h,\] and if $\deg (\yb_Ma)<0$ then $\yb_Ma=\sum_i \alpha_{i,M}(a)d_i$. Therefore
	\[a_\la d_Iv_k=\sum_{\ell(M)=g-1,g-2}(-1)^{p_M}\frac{\la_{\bar M}}{M!}\sum_i\alpha_{i,M}(a)d_id_I v_k+\sum_h \sum_{J:\, |J|<|I|}P_{I,k;J,h}(\la, \deb)d_Jv_h.
	\] 
	In particular, if $J=\{j_1,\ldots,j_r\}$ with $j_1<j_2<\cdots<j_r$ and $|J|\geq |I|$, then
	\[
	 P_{I,k;J,h}(\la,\deb)=\begin{cases}
	                        \delta_{hk}\displaystyle\sum_{\ell(M)=g-1,g-2}(-1)^{p_M}\frac{\la_{\bar M}}{M!}(-1)^{i-1}\alpha_{i,M}(a)&\textrm{if }I=J\setminus \{j_i\}\, \textrm{ for some $i$}\\
	                        0&\textrm{otherwise.}
	                       \end{cases}
	\]

	It follows that
	\begin{align}
	\nonumber  a_\la (d_J v_h)^*=&\sum_{i}-(-1)^{p(d_J)(p(d_J)+p(d_{J\setminus \{j_i\}}))}\sum_{\ell(M)=g-1,g-2}(-1)^{p_M}\frac{\la_{\bar M}}{M!} (-1)^{i-1}\alpha_{i,M}(a) (d_{J\setminus\{j_i\}}v_h)^*\\&-\sum_{k}\sum_{I:\, |I|>|J|}(-1)^{p(d_J)(p(d_J)+p(d_I))}P_{I,k;J,h}(\la, -\la-\deb)(d_Iv_k)^*.\label{hkhk}
	\end{align}
We show by reverse induction on $|I|$ that $S$ contains all elements $(d_Iv_k)^*$. The first step of the induction consists of the elements in $F_\Omega$ which belong to $S$ by definition.

So let $I\subsetneq \Omega$ and let $J$ be such that $I\subsetneq J$ and $|J|=|I|+1$. If $J=\{j_1,\ldots,j_r\}$ there exists $i$ such that $I=J\setminus \{j_i\}$.  By induction hypothesis all elements $(d_Jv_h)^*\in S$. Now we observe that there exist $a\in R$ and $N$ such that $d_{j_i}=\yb_Na$. Indeed, by definition of $\mathfrak g(R)$,  $d_{j_i}$ can be expressed as follows:
\[
 d_{j_i}=\sum_r \yb_{M_r} a_r.
\]
Let $N$ be such that $\yb_N$ is a non-zero common multiple of all $\yb_{M_r}$. Then we can write
\[
 d_{j_i}=\sum_r \yb_{M_r} a_r=\sum_r \beta_r \yb_N (\deb_{\yb})_{I_r} a_r={\yb}_N \sum_r \beta_r (-1)^{\ell(I_r)}\deb_{I_r}a_r={\yb}_Na,
\]
where $\beta_r$ are suitable constants and $I_r$ are suitable sequences of  indices.
By Equation \eqref{hkhk}, observing that $\alpha_{j_i,N}(a)=1$, we have
\[
 \yb_Na\, (d_Jv_h)^*=-(-1)^{p(d_J)(p(d_J)+p(d_I))}(-1)^{i-1}(d_Iv_h)^*+Z,
\]
where \[Z\in \sum_k\sum_{L:\, |L|>|J|}\displaywedge[\deb](d_Lv_k)^*.\]
This completes the proof.
\end{proof}

\section{The Lie conformal superalgebra of type $W$.}\label{section4}
We denote by $W(r,s)$ the Lie superalgebra of derivations of $\inlinewedge[[\xb]]$, where, as usual, ${\xb}=(x_1,\ldots,x_{r+s})$ and $x_1,\ldots,x_r$ are even  and $x_{r+1},\ldots,x_{r+s}$ are odd variables. In this section we realize $W(r,s)$ as the annihilation superalgebra associated to a Lie conformal superalgebra of type $(r,s)$ which satisfies Assumptions \ref{assumption}.
\begin{definition}\label{labracketW}
	We denote by $RW(r,s)$ the free $\inlinewedge[\deb]$-module with even generators $a_1,\ldots,a_{r}$ and odd generators $a_{r+1},\ldots,a_{r+s}$, all of degree $-2$, and $\la$-bracket given by\[
	[{a_i}_\la a_j]=(\de_i+\lambda_i)a_j+a_i\lambda_j\]
	and extended on the whole $RW(r,s)$ by properties \eqref{defsupconf1} and \eqref{defsupconf2} of Definition \ref{defsupconf}.
\end{definition}
\begin{proposition}\label{4.2}
	$RW(r,s)$ is a $\Z$-graded Lie conformal superalgebra of type $(r,s)$ satisfying Assumptions \ref{assumption}.
\end{proposition}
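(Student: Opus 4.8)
The plan is to verify the four items of Assumptions \ref{assumption} for $RW(r,s)$ with its natural $\Z$-gradation, after first making the gradation and the annihilation superalgebra explicit. Recall that the generators $a_i$ all sit in degree $-2$, that multiplication by $\lambda_i$ or $\de_i$ lowers degree by $2$, and that in $\mathcal A(RW(r,s))$ one sets $\deg(\yb_M a)=\deg(a)+2\ell(M)$. Thus $\yb_M a_i$ has degree $2\ell(M)-2$, and the homogeneous pieces are $\mathcal A_j = \langle \yb_M a_i : 2\ell(M)-2 = j\rangle$. The first task is simply to record that $RW(r,s)$ is $\Z$-graded in the sense of the definition: conditions (i) and (ii) are built into the weights, and (iii) follows because the bracket $[{a_i}_\la a_j]=(\de_i+\lambda_i)a_j+a_i\lambda_j$ is homogeneous of degree $-4=\deg a_i+\deg a_j$, after which one extends to all of $RW(r,s)$ using sesquilinearity.

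Next I would identify $\mathcal A(RW(r,s))$ with $W(r,s)$ explicitly and use that identification to read off the remaining conditions. The natural isomorphism sends $\yb_M a_i \mapsto \xb_M \de_{x_i}$ (up to the normalizing constant $f(M)$), matching the $\la$-bracket of Definition \ref{labracketW} to the commutator of vector fields on $\inlinewedge[[\xb]]$; one must check that formula \eqref{lambdatilde} and Definition \ref{annihilationalgebra} reproduce $[\xb_M\de_{x_i},\xb_N\de_{x_j}]$ correctly, which is a direct computation with the given bracket. Under this identification, the lowest-degree elements correspond to $\ell(M)=0$, i.e.\ the constant-coefficient derivations $\de_{x_i}$ living in degree $-2$; the only way to reach degrees $-1$ or $-3$ is with a half-integer $\ell(M)$, which is impossible. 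Here lies the key structural point for condition (2): since every $\yb_M a_i$ has \emph{even} degree $2\ell(M)-2$, the gradation of $\mathcal A(RW(r,s))$ is concentrated in even degrees, so the depth is exactly $2$ (the minimum being $\de_{x_i}$ in degree $-2$), which is $\leq 3$, and conditions (3) holds vacuously: $\mathcal A_{-1}=\mathcal A_{-3}=0$ are purely odd trivially.

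For condition (4) I would exhibit $\mathcal A(RW(r,s))_{-2}=\langle \de_{x_1},\ldots,\de_{x_{r+s}}\rangle$, the space of constant vector fields, and compute $\ad(\de_{x_i})$ on the whole annihilation superalgebra. Since conjugation by a constant vector field on $\inlinewedge[[\xb]]$ is differentiation, $\ad(\de_{x_i})$ acts as $\de_{y_i}$ under the identification $\xb\leftrightarrow \yb$, so the image of $\ad$ is precisely $D=\langle\de_{y_1},\ldots,\de_{y_{r+s}}\rangle$. Injectivity of $\ad$ on $\mathcal A_{-2}$ amounts to the statement that a nonzero constant-coefficient derivation acts nontrivially by commutator, which is clear since $[\de_{x_i},x_j\de_{x_k}]=\delta_{ij}\de_{x_k}\neq 0$. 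The main obstacle in this whole argument is not any single step but the bookkeeping in matching the sign-laden formulas \eqref{lambdatilde} and \eqref{Kproducts} to the honest supercommutator of vector fields in $W(r,s)$, so that the isomorphism $\mathcal A(RW(r,s))\cong W(r,s)$ is genuinely established rather than merely plausible; once that identification is secure, items (1)--(4) of Assumptions \ref{assumption} all follow from elementary and well-known facts about $W(r,s)$.
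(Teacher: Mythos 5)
Your verification of the $\Z$-gradation and of items (1)--(4) of Assumptions \ref{assumption} is essentially right, and it matches what the paper itself treats as routine: every monomial $\yb_M a_i$ has even degree $2\ell(M)-2$, so the gradation of the annihilation superalgebra is concentrated in even degrees, the depth is $2$, the components of degree $-1$ and $-3$ vanish, and $\ad(a_i)=-\de_{y_i}$ identifies the degree $-2$ component with $D$. The genuine gap is elsewhere: you never prove that $RW(r,s)$ \emph{is} a Lie conformal superalgebra, i.e.\ that the $\la$-bracket of Definition \ref{labracketW}, extended by sesquilinearity, satisfies the conformal skew-symmetry \eqref{defsupconf3} and the conformal Jacobi identity \eqref{defsupconf4}. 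Sesquilinearity is built into the definition, but these two axioms are not, and they are exactly what the paper's proof consists of: an explicit check of both identities on the generators $a_i$ (from which they propagate to all of $RW(r,s)$ by sesquilinearity), the Assumptions then being dismissed as straightforward. Your proposal addresses only the words ``$\Z$-graded'' and ``satisfying Assumptions \ref{assumption}'' in the statement, not the words ``Lie conformal superalgebra.''

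Moreover, the route you choose cannot silently absorb this missing step, because as written it is circular. The bracket of Definition \ref{annihilationalgebra} on $\mathcal A(R)=\tilde R/\tilde \deb \tilde R$ is only known to satisfy the Lie superalgebra axioms \emph{because} $R$ satisfies \eqref{defsupconf3} and \eqref{defsupconf4}: this is precisely how the paper proves that $\mathcal A(R)$ is a Lie superalgebra. So you may not appeal to ``the annihilation superalgebra'' of $RW(r,s)$ and its identification with $W(r,s)$ (the content of Proposition \ref{isom}, a separate statement in the paper) before the conformal axioms are established. The gap can be repaired in two ways: either verify the two axioms directly on generators, as the paper does; or show that the bracket \emph{formula} on $\tilde R/\tilde\deb\tilde R$ agrees with the honest supercommutator of $W(r,s)$ and then pull skew-symmetry and Jacobi back to $RW(r,s)$, using the fundamental relation of Proposition \ref{lemmino} together with an injectivity argument for the map $a\mapsto a_\la$ --- equivalently, realize $RW(r,s)$ by pairwise local formal distributions with coefficients in a Lie superalgebra, which is exactly the strategy the paper uses for $RE(5,10)$, $RE(3,6)$ and $RE(3,8)$, but not here. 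Either repair requires real work that your proposal leaves unstated.
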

\begin{proof} It is sufficient to verify that conformal skew-symmetry and  Jacobi identity hold for the generators $a_i$.
We first verify the conformal skew-symmetry. We have	
	\begin{align*}
	[{a_j}_\la a_i]&= (\de_j+\lambda_j)a_i+a_j\lambda_i\\
	&= (-1)^{p_ip_j}(a_i(\de_j+\lambda_j)+\lambda_ia_j)\\
	&= (-1)^{p_ip_j}(-(\de_i-\lambda_i-\de_i)a_j-a_i(-\de_j-\lambda_j))\\
	&=-(-1)^{p_ip_j}[{a_i}_{-\la-\deb}a_j].
	\end{align*}
The Jacobi identity can be verified similarly. Namely we have:
	\begin{align*}
	[{a_i}_\la [{a_j}_\mub a_k]]&=[{a_i}_\la (\de_j+\mu_j)a_k+a_j\mu_k]\\
	&=(-1)^{p_ip_j}\mu_j [{a_i}_\la a_k]+[{a_i}_\la \de_ja_k]+[{a_i}_\la a_j]\mu_k\\
	&= (-1)^{p_ip_j}\mu_j((\de_i+\lambda_i)a_k+a_i\lambda_k)+(-1)^{p_ip_j}(\de_j+\lambda_j)[{a_i}_\la a_k]\\
	&+((\de_i+\lambda_i)a_j+a_i\lambda_j)\mu_k\\
	&= (\de_i+\lambda_i)\mu_ja_k+a_i\mu_j\lambda_k
	 +(\de_i+\lambda_i)(\de_j+\lambda_j)a_k
	+a_i(\de_j+\lambda_j)\lambda_k+(\de_i+\lambda_i)a_j\mu_k\\
	& +a_i\lambda_j\mu_k;
	\end{align*}
	
	\begin{align*}
	[[{a_i}_\la a_j]_{\la+\mub}a_k]&=[{(\de_i+\lambda_i)a_j+a_i\lambda_j\,}_{\la+\mub} a_k]\\
	&=-\mu_i[{a_j}_{\la+\mub}a_k]+(-1)^{p_ip_j}\lambda_j[{a_i}_{\la+\mub}a_k]\\
	&=-\mu_i((\de_j+\lambda_j+\mu_j)a_k+a_j(\lambda_k+\mu_k))+(\de_i+\lambda_i+\mu_i)\lambda_ja_k+a_i\lambda_j(\lambda_k+\mu_k);
	\end{align*}
	
	\begin{align*}
	(-1)^{p_ip_j}[{a_j}_\mub[{a_i}_\la a_k]]&=(-1)^{p_ip_j}[{a_j}_\mub (\de_i+\lambda_i)a_k+a_i\lambda_k]\\
	&=(\de_i+\mu_i+\lambda_i)((\de_j+\mu_j)a_k+a_j\mu_k)+a_i(\de_j+\mu_j)\lambda_k+\mu_ia_j\lambda_k.
	\end{align*}
	The fact that $RW(r,s)$ satisfies Assumptions \ref{assumption} is straightforward.
	
	\end{proof}
	\begin{proposition}\label{isom}The map
	\[
	 \varphi: \mathfrak g(RW(r,s))\rightarrow W(r,s)
	\]
	given by $\yb_M a_i \mapsto - \xb_M \de_{x_i}$
	is a $\Z$-graded Lie superalgebra isomorphism, where the $\Z$-gradation on $W(r,s)$ is given by $\deg x_i=-\deg \de_{x_i}=2$.

	\end{proposition}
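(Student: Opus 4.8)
The plan is to show that $\varphi$ is a $\Z$-graded, parity-preserving linear bijection which is moreover a Lie superalgebra homomorphism, the only genuine content being the last property. By Assumptions \ref{assumption}(4) we have $\mathfrak g(RW(r,s))\cong \mathcal A(RW(r,s))$, with $\mathfrak g(RW(r,s))_{-2}$ identified with $D=\langle \de_{y_1},\dots,\de_{y_{r+s}}\rangle$, so that $a_i=\de_{y_i}$ and $\varphi(\de_{y_i})=-\de_{x_i}$; hence I would run the whole computation inside the annihilation algebra $\mathcal A(RW(r,s))$, whose $\C$-basis $\{\yb_M a_i\}$ is obtained from $\tilde R/\tilde\deb\tilde R$ by repeatedly using $\de_i\equiv -\de_{y_i}$. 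Since the images $\{-\xb_M\de_{x_i}\}$ form a $\C$-basis of $W(r,s)$ and $\deg(\yb_M a_i)=2\ell(M)-2=\deg(\xb_M\de_{x_i})$ for the stated gradation, the bijectivity, the parity-preservation and the gradedness of $\varphi$ are immediate.

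It then remains to verify $\varphi([\yb_M a_i,\yb_N a_j])=[\xb_M\de_{x_i},\xb_N\de_{x_j}]$ for all basis elements. First I would record the $K$-products of $RW(r,s)$: as $[{a_i}_\la a_j]=\de_i a_j+\lambda_i a_j+(-1)^{p_ip_j}\lambda_j a_i$ is affine in $\la$, the product $(a_i)_K a_j$ vanishes for $\ell(K)\geq 2$, and the surviving ones are $(a_i)_\emptyset a_j=\de_i a_j$, $(a_i)_{(i)}a_j=a_j$ and $(a_i)_{(j)}a_j=(-1)^{p_ip_j}a_i$ when $i\neq j$, while $(a_i)_{(i)}a_i=(1+(-1)^{p_i})a_i$. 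Substituting these into the bracket of Definition \ref{annihilationalgebra}, only $J=\emptyset,(i),(j)$ contribute. The key move is to reduce the $J=\emptyset$ term: since $\de_i\equiv-\de_{y_i}$ on $\mathcal A(RW(r,s))$, one has $\yb_M(\de_i a_j)\yb_N\equiv \pm\,\de_{y_i}(\yb_M a_j\yb_N)$, and the Leibniz rule splits the right-hand side into a part in which $\de_{y_i}$ differentiates $\yb_M$ and a part in which it differentiates $\yb_N$.

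The crucial cancellation is that the $\de_{y_i}$-on-$\yb_M$ part of the $J=\emptyset$ term cancels exactly against the $J=(i)$ term; here the value $(a_i)_{(i)}a_i=2a_i$ for even $i=j$ and $=0$ for odd $i=j$ is precisely what makes the diagonal cases consistent (matching $[\de_{x_i},\de_{x_i}]=2\de_{x_i}^2$ in $W(r,s)$). What remains is the $\de_{y_i}$-on-$\yb_N$ part together with the $J=(j)$ term, and upon applying $\varphi$ via $\varphi(a_k\yb_P)=-\xb_P\de_{x_k}$ these reproduce, respectively, the two summands of the derivation bracket $\xb_M\,\de_{x_i}(\xb_N)\,\de_{x_j}-(-1)^{(p_M+p_i)(p_N+p_j)}\xb_N\,\de_{x_j}(\xb_M)\,\de_{x_i}$ in $W(r,s)$, because $\de_{y_i}\yb_N$ and $\de_{x_i}\xb_N$ carry the same coefficient $m_i(N)$. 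I would confirm this explicitly first in the purely even case, where all signs are $+1$ and the matching is transparent.

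The main obstacle is entirely the sign bookkeeping in the super case: one must track the parities $p_M,p_N,p_i,p_j$ through the $\inlinewedge[[\yb]]$-bimodule structure of $\tilde R$, through the reordering $\yb_N a_j=(-1)^{p_Np_j}a_j\yb_N$, through the Koszul signs in the Leibniz expansion of $\de_{y_i}(\yb_M a_j\yb_N)$, and through the reordering needed to put each contribution into reduced form $\yb_P a_k$, and then show that the total sign agrees with the super-commutator sign $(-1)^{(p_M+p_i)(p_N+p_j)}$ of the derivation bracket. All the remaining ingredients are routine.
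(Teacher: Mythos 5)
Your proposal is correct and takes essentially the same route as the paper's proof: the paper likewise computes $[\yb_M a_i,\yb_N a_j]$ directly from the bracket of Definition \ref{annihilationalgebra} (only $J=\emptyset,(i),(j)$ contribute), eliminates the $J=\emptyset$ term via $\de_i\equiv-\de_{y_i}$ together with the Leibniz rule, cancels the resulting $(\de_{y_i}\yb_M)$-contribution against the $J=(i)$ term, and identifies what survives with the derivation bracket $\xb_M(\de_{x_i}\xb_N)\de_{x_j}-(-1)^{(p_i+p_M)(p_j+p_N)}\xb_N(\de_{x_j}\xb_M)\de_{x_i}$ of $W(r,s)$, exactly as you outline. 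The only discrepancies are cosmetic: the paper does not even discuss bijectivity or gradedness (treating them as immediate, as you do), and your side remark identifying $a_i$ with $\de_{y_i}$ in $\mathfrak g(R)_{-2}$ has the sign reversed under the paper's conventions (one gets $\ad(a_i)=-\de_{y_i}$), which does not affect the argument.
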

	\begin{proof}
	In the Lie superalgebra $W(r,s)$ one has:
	\begin{align*} 
	[\xb_{M} \de_{x_i}, \xb_{N} \de_{x_j}]=\xb_{M}(\de_{x_i}\xb_{N})\de_{x_j}-(-1)^{(p_i+p_M)(p_j+p_N)}\xb_{N}(\de_{x_j}\xb_{M})\de_{x_i}.
	\end{align*}
	On the other hand, by Definition \ref{annihilationalgebra}, we have:
	\begin{align*}
	[\yb_M a_i, \yb_N a_j]&=(-1)^{p_jp_N}\sum_{K}\frac{1}{f(K)}(\yb_M (\de_{\yb}~)_K)({a_i}_Ka_j)\yb_N\\
	&=(-1)^{p_jp_N}(\yb_M(\de_ia_j)\yb_N+(\yb_M\de_{y_i})a_j\yb_N+(\yb_M\de_{y_j})((-1)^{p_ip_j}a_i)\yb_N\\
	&=(-1)^{p_jp_N}(-(-1)^{p_ip_M+p_jp_N}(\de_{y_i}\yb_M\yb_Na_j)+(-1)^{p_ip_M+p_jp_N}(\de_{y_i}\yb_M)\yb_Na_j\\
&	+(-1)^{p_ip_j+p_jp_M+p_ip_N}(\de_{y_j}\yb_M)\yb_Na_i)\\
	&=-\yb_M(\de_{y_i}\yb_N)a_j+(-1)^{(p_i+p_M)(p_j+p_N)}\yb_N(\de_{y_i}\yb_M)a_j.
	\end{align*}
	 \end{proof}

\begin{remark}Let $R=RW(r,s)$.
  We observe that $\mathfrak g(R)_0 \cong \mathfrak{gl}(r,s)$,  hence the center of the even part of $\mathfrak g(R)_0$ is spanned by $c_0=\sum_{i=1}^r
y_ia_i$ and $c_1=\sum_{i=r+1}^{r+s}
y_ia_i$.
We have $ \chi_{c_0}=-r$ and  $ \chi_{c_1}=s.$

We point out that this is consistent with the classification of degenerate Verma modules of $W(1,s)$ given in \cite{BKLR} that we recall below. In this case let us denote by $F(\alpha_0,\alpha_1; \beta_1,\ldots,\beta_{s-1})$ the irreducible $\mathfrak{gl}(1,s)$-module with highest weight $(\alpha_0,\alpha_1; \beta_1,\ldots,\beta_{s-1})$ with respect to the elements $x_1\de_{x_1}$, $\sum_{i=2}^{s+1}x_i \de_{x_i}$, $x_2\de_{x_2}-x_3\de_{x_3},\ldots,x_{s}\de_{x_s}-x_{s+1}\de_{x_{s+1}}$. 

We observe that for $k>0$, the dual of the $\mathfrak{gl}(1,s)$-module $F(0,-k;0,\ldots,0,k)$ is $F(1,k-1;k-1,0,\ldots,0)$. Indeed the module $F(1,k-1;k-1,0,\ldots,0)$ consists of $k$-forms in the indeterminates $x_1,\ldots,x_{s+1}$ with constant coefficients, its highest weight vector is $dx_1(dx_2)^{k-1}$ and its lowest weight vector is $dx_{s+1}^k$. It follows that the shifted dual of $F(0,-k;0,\ldots,0,k)$ is $F(0,s+k-1;k-1,0,\ldots,0)$. Moreover, the module $F(0,0;0,\ldots,0)$ is self-dual and so its shifted dual is $F(-1,s;0,\ldots,0)$.

In this notation we have the following two sequences of morphisms of  generalized Verma modules:

\[
 \begin{array}{ccccccc}M(0,0;0,\ldots,0)& \leftarrow & M(0,-1;0,\ldots,0,1)& \leftarrow & M(0,-2;0,\ldots,0,2)& \leftarrow &   \cdots \\
 &&&&&&\\
 M(-1,s;0,\ldots,0)& \rightarrow & M(0,s;0,0,\ldots,0)& \rightarrow & M(0,s+1;1,0\ldots,0)& \rightarrow &   \cdots,
 \end{array}
\]
which contain all non-trivial morphisms. Note that these two sequences are dual to each other.

\end{remark}

\section{The finite simple Lie conformal superalgebras of type $(1,0)$}\label{section5}

It is known that all infinite-dimensional simple linearly compact Lie superalgebras of growth 1 (rather their universal central extensions) are annihilation Lie superalgebras of simple Lie conformal superalgebras of type $(1,0)$, which have finite rank over $\C[\de]$
\cite{FK}. These Lie superalgebras are
\begin{equation}
W(1,n), ~S'(1,n), ~K(1,n), ~K'(1,4) ~{\mbox{and}}~ E(1,6),
\label{(1,0)}
\end{equation}
except that here by $K'(1,4)$ we mean the universal central extension of $[K(1,4), K(1,4)]$.
The corresponding simple Lie conformal superalgebras are respectively
\begin{equation}
W_n, ~S_n, ~K_n, ~K'_4 ~{\mbox{and}}~ CK_6
\label{(1,0)Lie}
\end{equation}
(in fact there are many Lie conformal superalgebras with the annihilation Lie superalgebra $S(1,n)$ \cite{FK}).

All Lie superalgebras (\ref{(1,0)}) are $\Z$-graded by their principal gradation described in
\cite{CK2}, and these gradations are induced from $\Z$-gradations of the corresponding Lie conformal superalgebras. For example
 the principal $\Z$-gradation of $W(1,n)$ and $S(1,n)$ is defined as in Proposition 
\ref{isom}, and it is induced from the $\Z$-gradation of $RW(r,s)$ from Proposition \ref{4.2}.

The linearly compact Lie superalgebra $K(1,n)$ is identified with the linearly compact superspace
$$\displaywedge(\xib)[[y]], ~\xib=(\xi_1, \dots, \xi_n), ~ p(\xi_i)=\bar{1}, ~p(y)=\bar{0}.$$
The bracket is given by the formula
\begin{equation}\label{Kbracket}
[\phi,\psi]= \Big(2\phi - \sum_{i=1}^{n} \xi_i \de_{\xi_i} \phi \Big)(\de_y \psi)- (\de_y \phi)\Big(2\psi- \sum_{i=1}^{n} \xi_i \de_{\xi_i} \psi)+(-1)^{p(\phi)}\sum_{i=1}^n  (\de_{\xi_i}\phi)(\de_{\xi_i} \psi),
\end{equation}
and the principal $\Z$-gradation is
\begin{equation}
\deg(\xi_{i_1}\dots\xi_{i_s}t^m)=s+2m-2,
\label{Kgrading}
\end{equation}
hence it is of the form $K(1,n)=\prod_{k\geq -2}\mathfrak{g}_k$, where $\mathfrak{g}_{-2}=\C1$,
$\mathfrak{g}_{-1}={\mbox{span}}\{\xi_1,\dots,\xi_n\}$, $\mathfrak{g}_{0}={\mbox{span}}\{\xi_i\xi_j\}\oplus\C y$.
It follows from (\ref{Kbracket}) that $\mathfrak{g}_0$ is isomorphic to $\mathfrak{so}_n\oplus\C y$, where $y$ is its 
central element and $\ad y$ defines the $\Z$-gradation;
the adjoint representation of $\mathfrak{so}_n$ is trivial on $\mathfrak{g}_{-2}$ and standard on $\mathfrak{g}_{-1}$. 
Hence, by Theorem \ref{dualmodule78}, the shift character $\chi$ on $\mathfrak{g}_0$ is as follows:
\begin{equation}
\chi_{|{\mathfrak{so_n}}}=0, ~~\chi_y=n-2.
\label{Kshift}
\end{equation}

The Lie superalgebras $K'(1,4)$ and $E(1,6)$ with their principal gradation are $\Z$-graded subalgebras of $K(1,4)$ and $K(1,6)$ respectively, with the same non-positive part. Hence the shift character in these cases
is given by \eqref{Kshift} with $n=4$ and 6, respectively. Note also that on the central element $c$ of the universal central extension of $K'(1,4)$ the shift character vanishes.

This description of the shift character shows that the complexes of degenerate Verma modules described in
\cite{BKL}, \cite{B} and \cite{BKL1} for $K(1,n)$, $K'(1,4)$, and $E(1,6)$, respectively, are mapped to each other under duality described by Theorem \ref{main}.

Of course, in order to be able to apply Theorem \ref{main}, we need to check that the Lie superalgebras in question
are annihilation superalgebras of Lie conformal superalgebras, and that Assumptions \ref{assumption}
hold. But this holds for $K(1,n)$ and $K'(1,4)$ by \cite{FK} and for $E(1,6)$ by \cite{CK}.

For example, the Lie conformal superalgebra with annihilation Lie superalgebra $K(1,n)$,
as an $\C[\de]$-module, is $K_n=\C[\de]\otimes\inlinewedge(\xib)$, and the 
$\lambda$-bracket between $f=\xi_{i_1}\dots\xi_{i_k}$ and $g=\xi_{j_1}\dots\xi_{j_h}$  is given in \cite{FK} by the following formula:
\[
[f_\lambda g]= (k-2) \de(fg)+(-1)^k \sum_{i=1}^n (\de_{\xi_i}f) (\de_{\xi_i}g)+\lambda (k+h-4)fg.\]
The principal gradation on $K(1,n)$ is induced by the following $\Z$-gradation on $K_n$:
$$\deg (\xi_{i_1}\cdots \xi_{i_s})=s-2,~\deg(\de)=\deg(\lambda)=-2.$$ 

\section{The Lie conformal superalgebra $RE(5,10)$.}\label{section6}
In this section we introduce, following \cite{CK2}, the exceptional linearly  compact infinite-dimensional Lie superalgebra $E(5,10)$
and realize it as the annihilation superalgebra of a Lie conformal superalgebra of type $(5,0)$.

The even part of $E(5,10)$ consists of zero-divergence vector fields in five (even) indeterminates $x_1,\ldots,x_5$, i.e., 
\[E(5,10)_{\bar{0}}=S_5=\{X=\sum_{i=1}^5f_i\partial_{x_i} ~|~ f_i\in\C[[x_1,\dots,x_5]], \textrm{div}(X):=\sum_i\frac{\de{f_i}}{\de{x_i}}=0\},\]
and $E(5,10)_{\bar{1}}=\Omega^2_{cl}$ consists of closed two-forms in the five indeterminates $x_1,\ldots,x_5$.
The bracket between a vector field and a form is given by the Lie derivative and for $f,g\in \C[[x_1,\dots,x_5]]$ we have
$$[fdx_i\wedge dx_j,g dx_k\wedge dx_l]=\varepsilon_{ijkl}fg\partial_{x_{t_{ijkl}}}$$ 
where, for $i,j,k,l\in \{1,2,3,4,5\}$, $\varepsilon_{ijkl}$ and $t_{ijkl}$ are defined as follows: if $|\{i,j,k,l\}|=4$ we let $t_{ijkl}\in \{1,2,3,4,5\}$ be such that $|\{i,j,k,l,t_{ijkl}\}|=5$ and $\varepsilon_{ijkl}$ be the sign of the permutation 
$(i,j,k,l,t_{ijkl})$. If $|\{i,j,k,l\}|<4$ we let  $\varepsilon_{ijkl}=0$. 
From now on we shall denote $dx_i\wedge dx_j$ simply by $d_{ij}$.

The Lie superalgebra $E(5,10)$ has a consistent $\Z$-gradation of depth 2, called principal, where,
for $k\geq 0$,
\begin{align*}
E(5,10)_{2k-2}&=\langle f\partial_{x_i} ~|~i=1,\dots,5, f\in\C[[x_1,\dots, x_5]]_{k}\rangle\cap S_5\\
E(5,10)_{2k-1}&=\langle fd_{ij} ~|~ i,j=1,\dots,5, f\in\C[[x_1,\dots, x_5]]_{k}\rangle\cap\Omega^2_{cl}
\end{align*}
where 
by $\C[[x_1,\dots, x_5]]_{k}$ we denote the homogeneous component of $\C[[x_1,\dots, x_5]]$ of degree $k$.

Note that $E(5,10)_0\cong \mathfrak{sl}_5$, and its modules $E(5,10)_{-1}$ and
$E(5,10)_{-2}$ are isomorphic to the exterior square and to the dual, respectively, of the standard $\mathfrak{sl}_5$-module.

\begin{definition}\label{hatRE}
 We denote by $\widehat{RE}(5,10)$ the free $\C[\deb]$-module (where $\deb=(\de_1,\ldots,\de_5)$) generated by
 even elements $\de_{x_i}$ and odd elements $d_{jk}$ with $i,j,k\in\{1,2,3,4,5\}$, $j<k$.
 We set $d_{jj}=0$ and $d_{jk}=-d_{kj}$ if $j>k$. 
 We consider on $\widehat{RE}(5,10)$
 the following $\la$-bracket:
 \begin{itemize}
  \item $[{\de_{x_i}}_\la \de_{x_j}]= -(\de_i+\lambda_i)\de_{x_j}-\lambda_j\de_{x_i}$;
  \item $[{\de_{x_i}}_\la d_{jk}]=-(\de_i+\lambda_i)d_{jk}+\delta_{ij}\sum_{h}\lambda_hd_{hk}
  -\delta_{ki}\sum_{r}\lambda_rd_{rj}$;
  \item $[{d_{jk}}_\la {\de_{x_i}}]=-\lambda_id_{jk}+\delta_{ij}\sum_{h}(\lambda_h+\de_h)d_{hk}
  -\delta_{ki}\sum_{r}(\lambda_r+\de_r)d_{rj}$
  \item $[{d_{ij}}_\la d_{hk}]=[{d_{ij}},d_{hk}]$
 \end{itemize}
 extended on the whole $\widehat{RE}(5,10)$ by properties \eqref{defsupconf1} and \eqref{defsupconf2} of Definition \ref{defsupconf}.
 \end{definition}
 \begin{remark}
  One can verify that the $\la$-bracket defined on $\widehat{RE}(5,10)$ satisfies the conformal skew-symmetry in 
   Definition \ref{defsupconf} but it does not satisfy the conformal Jacobi identity.
   \end{remark}

Note that the $\la$-bracket $[{\de_{x_i}}_\la \de_{x_j}]$ differs in sign from that given in Definition \ref{labracketW}, but is more natural in this context.  In order to explain how Definition \ref{hatRE} arises we make a short detour on
formal distribution algebras.
We adopt the same notation and terminology as in \cite{K1}. All variables in this context are even. For this 
reason we indicate monomials with the usual multi-exponent notation, namely, for $N=(n_1, \dots, n_5)$ and
$\xb=(x_1,\ldots,x_5)$, $\xb^N=x_1^{n_1}\cdots x_5^{n_5}$.
For two variables $w,z$ define the formal $\delta$-function
\[\delta(z-w)=\sum_{i,j\,|\, i+j=-1}w^iz^j.\]
Recall that it satisfies the following properties:
\begin{itemize}
\item[(i)] $\delta(z-w)=\delta(w-z)$,
\item[(ii)] $\de_z\delta(z-w)=-\de_w\delta(z-w)$,
\item[(iii)] $f(z)\delta(z-w)=f(w)\delta(z-w)$ for any formal distribution $f(z)$.
\end{itemize}
We let  $\wb=(w_1,\ldots,w_5)$, $\zb=(z_1,\ldots,z_5)$ and $\la=(\lambda_1,\ldots,\lambda_5)$,
and introduce the formal $\delta$-function in 5 variables (the discussion below holds for any finite number of variables)
\[
\delta (\xb-\zb)= \prod_i \delta(x_i-z_i).
\]
The properties of $\delta(z-w)$ imply similar properties of it:
\begin{equation}\label{cataliz1}
f(\xb)\delta(\xb-\wb)=f(\wb)\delta(\xb-\wb), ~{\mbox{for any formal distribution}}~ f~
{\mbox{in 5 variables,}}
\end{equation}
in particular:
\begin{equation}\label{cataliz}
\delta(\xb-\zb)\delta(\xb-\wb)=\delta(\wb-\zb)\delta(\xb-\wb).
\end{equation}
Also
\begin{equation} \label{derdelta}
\de_{x_i}\delta(\xb-\wb)=-\de_{w_i}\delta(\xb-\wb),
\end{equation}
and $\delta(\xb-\wb)=\delta(\wb-\xb)$.
\begin{lemma}\label{prel}
We have
\[
\delta(\xb-\zb)\de_{x_i}\delta(\xb-\wb)=-\delta(\wb-\zb)\de_{w_i}\delta(\xb-\wb)-(\de_{w_i}\delta(\wb-\zb))\delta(\xb-\wb).
\]
\end{lemma}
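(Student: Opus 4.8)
The plan is to derive the identity by differentiating the already-established product formula \eqref{cataliz} with respect to $w_i$, after first trading the $x$-derivative on the left-hand side for a $w$-derivative so that every derivative in sight is taken with respect to the same variable.

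First I would apply \eqref{derdelta} to rewrite the left-hand side as
\[
\delta(\xb-\zb)\de_{x_i}\delta(\xb-\wb)=-\delta(\xb-\zb)\de_{w_i}\delta(\xb-\wb),
\]
which reduces the problem to computing the single product $\delta(\xb-\zb)\de_{w_i}\delta(\xb-\wb)$.

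Next I would start from \eqref{cataliz}, namely $\delta(\xb-\zb)\delta(\xb-\wb)=\delta(\wb-\zb)\delta(\xb-\wb)$, and apply $\de_{w_i}$ to both sides. Since $\delta(\xb-\zb)$ does not involve $\wb$, the left-hand side becomes simply $\delta(\xb-\zb)\de_{w_i}\delta(\xb-\wb)$, while the right-hand side splits by the Leibniz rule into
\[
(\de_{w_i}\delta(\wb-\zb))\delta(\xb-\wb)+\delta(\wb-\zb)\de_{w_i}\delta(\xb-\wb).
\]
Substituting the resulting expression for $\delta(\xb-\zb)\de_{w_i}\delta(\xb-\wb)$ into the identity from the first step and distributing the minus sign yields exactly the claimed formula.

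The only point that requires a little care is the legitimacy of applying $\de_{w_i}$ term-by-term to the product in \eqref{cataliz} of formal distributions in the three sets of variables $\xb$, $\wb$, $\zb$, and of using the Leibniz rule there; but this is routine, since differentiation of formal distributions is defined coefficient-wise and all the relevant products are well defined thanks to the $\delta$-function factors. I do not expect a genuine obstacle: the argument is purely formal manipulation built on \eqref{cataliz}, \eqref{derdelta}, and the product rule.
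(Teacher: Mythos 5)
Your proof is correct and follows essentially the same route as the paper: trade $\de_{x_i}$ for $-\de_{w_i}$ via \eqref{derdelta}, identify $\delta(\xb-\zb)\de_{w_i}\delta(\xb-\wb)$ with $\de_{w_i}$ applied to the product in \eqref{cataliz}, and expand by the Leibniz rule. The only cosmetic difference is that you differentiate both sides of \eqref{cataliz} and then substitute, while the paper substitutes \eqref{cataliz} under the derivative first; the content is identical.
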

\begin{proof}
Using Equations \eqref{derdelta} and \eqref{cataliz} we have
\begin{align*}
\delta(\xb-\zb)\de_{x_i}\delta(\xb-\wb)&=-\delta(\xb-\zb)\de_{w_i}\delta(\xb-\wb)\\
&=-\de_{w_i}\big( \delta(\xb-\zb)\delta(\xb-\wb)\big)\\
&=-\de_{w_i}\big(\delta(\wb-\zb)\delta(\xb-\wb)\big).\end{align*}
\end{proof}

Now we consider the free $\C[[x_{1},\ldots,x_5]][x_1^{-1},\ldots,x_5^{-1}]$-module $A_5$ with basis  $\de_{x_i}$, $i=1,\ldots,5$, and $d_{ij}$, $1\leq i<j\leq 5$. Define the skew-supersymmetric bracket on $A_5$ by the same formulas as for $E(5,10)$.
Then $A_5$ is a superalgebra containing $E(5,10)$ as a subalgebra, but the bracket 
on $A_5$ does not satisfy the Jacobi identity.

Recall that two formal distributions $a(\zb),b(\zb)\in A_5[[\zb^{\pm1}]]$, i.e.\ bilateral series with coefficients in $A_5$ in the variables $z_1,\ldots,z_5$, are called \emph{local} if 
\[
[a(\zb),b(\wb)]=\sum_{N\in \mathbb Z_+^5}\frac{1}{N!}c_N(\wb)\de_\wb^{N}\delta(\wb-\zb),
\]
where $c_N(\wb)$ are formal distributions in $A_5[[\wb^{\pm 1}]]$ and only a finite number of $c_N(\wb)$ are nonzero. 
We say in this case that $[a(\zb)_{(N)}b(\zb)]=c_N(\zb)$ is the $N$-product of $a(\zb)$ and $b(\zb)$ and we define their $\la$-bracket by
\[
[a(\zb)_\la b(\zb)]=\sum_N \frac{1}{N!}c_N(\zb) \la^{N}.
\]
For all $a\in A_5$ we let $a[\zb]=\delta(\xb-\zb)a \in A_5[[\zb^{\pm 1}]]$. It follows from the lemmas below  that these formal distributions are pairwise local.
\begin{lemma}\label{deiladej}
For all $i,j\in \{1,2,3,4,5\}$ we have
\[
[\de_{x_i}[\zb],\de_{x_j}[\wb]]=-\de_{w_i}(\de_{x_j}[\wb])\delta(\wb-\zb)-\de_{x_j}[\wb]\de_{w_i}\delta(\wb-\zb)-\de_{x_i}[\wb]\de_{w_j}\delta(\wb-\zb).
\]
In particular
\[
[\de_{x_i}[\zb]_\la \de_{x_j}[\zb]]=-\de_{z_i}(\de_{x_j}[\zb])-\de_{x_j}[\zb]\lambda_i-\de_{x_i}[\zb]\lambda_j.
\]

\end{lemma}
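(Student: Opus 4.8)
The plan is to expand the bracket straight from the definition of the bracket on $A_5$, which on the two even vector fields $\de_{x_i}[\zb]=\delta(\xb-\zb)\de_{x_i}$ and $\de_{x_j}[\wb]=\delta(\xb-\wb)\de_{x_j}$ is just the commutator of derivations of $\C[[\xb]][\xb^{-1}]$. First I would write
\[
[\de_{x_i}[\zb],\de_{x_j}[\wb]]=\delta(\xb-\zb)(\de_{x_i}\delta(\xb-\wb))\de_{x_j}-\delta(\xb-\wb)(\de_{x_j}\delta(\xb-\zb))\de_{x_i},
\]
where in the first summand $\de_{x_i}$ differentiates the coefficient $\delta(\xb-\wb)$, and similarly in the second. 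This splits the work into two terms $T_1$ and $T_2$, and the goal in each is to absorb all $\xb$-dependence into a single factor of the form $a[\wb]=\delta(\xb-\wb)a$, with the remaining dependence carried by $\delta(\wb-\zb)$ and its $\de_\wb$-derivatives. Throughout, all the factors are even, so no Koszul signs intervene.

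For $T_1$ this is immediate from Lemma \ref{prel}: substituting that identity gives $T_1=-\delta(\wb-\zb)(\de_{w_i}\delta(\xb-\wb))\de_{x_j}-(\de_{w_i}\delta(\wb-\zb))\delta(\xb-\wb)\de_{x_j}$, and since $\de_{x_j}$ is a constant element of $A_5$ we may recognize $(\de_{w_i}\delta(\xb-\wb))\de_{x_j}=\de_{w_i}(\de_{x_j}[\wb])$ and $\delta(\xb-\wb)\de_{x_j}=\de_{x_j}[\wb]$, producing the first two terms of the claimed formula. The term $T_2$ is the step I expect to be the main obstacle, because there the surviving derivation $\de_{x_j}$ lands on $\delta(\xb-\zb)$ rather than on $\delta(\xb-\wb)$, so Lemma \ref{prel} does not apply verbatim. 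Instead I would use \eqref{derdelta} to rewrite $\de_{x_j}\delta(\xb-\zb)=-\de_{z_j}\delta(\xb-\zb)$, pull $\de_{z_j}$ (which commutes with $\delta(\xb-\wb)$) outside the product, apply \eqref{cataliz} to replace $\delta(\xb-\wb)\delta(\xb-\zb)$ by $\delta(\wb-\zb)\delta(\xb-\wb)$, and finally convert $\de_{z_j}\delta(\wb-\zb)=-\de_{w_j}\delta(\wb-\zb)$. After collecting signs this yields exactly $-\de_{x_i}[\wb]\,\de_{w_j}\delta(\wb-\zb)$, the third term; summing $T_1$ and $T_2$ gives the stated identity.

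For the ``in particular'' assertion I would simply read off the $N$-products from the locality expansion $[\de_{x_i}[\zb],\de_{x_j}[\wb]]=\sum_N\frac1{N!}c_N(\wb)\de_\wb^N\delta(\wb-\zb)$. The summand carrying no derivative of $\delta$ gives $c_0(\wb)=-\de_{w_i}(\de_{x_j}[\wb])$, while the two summands carrying $\de_{w_i}\delta(\wb-\zb)$ and $\de_{w_j}\delta(\wb-\zb)$ give $c_{e_i}(\wb)=-\de_{x_j}[\wb]$ and $c_{e_j}(\wb)=-\de_{x_i}[\wb]$ respectively (the two contributions simply adding when $i=j$). Substituting these into $[a_\la b]=\sum_N\frac1{N!}c_N(\zb)\la^N$ and using $\lambda^{e_i}=\lambda_i$, $e_i!=1$, I obtain $-\de_{z_i}(\de_{x_j}[\zb])-\de_{x_j}[\zb]\lambda_i-\de_{x_i}[\zb]\lambda_j$, as claimed. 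Under the identification $a[\zb]\leftrightarrow a$ and $\de_{z_i}\leftrightarrow\de_i$ this is precisely $-(\de_i+\lambda_i)\de_{x_j}-\lambda_j\de_{x_i}$, matching the $\la$-bracket prescribed in Definition \ref{hatRE}.
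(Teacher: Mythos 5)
Your proof is correct and takes essentially the same route as the paper's: expand the commutator of the two vector fields, apply Lemma \ref{prel} to the term where $\de_{x_i}$ lands on $\delta(\xb-\wb)$, and reduce the other term via \eqref{derdelta} and \eqref{cataliz} (the paper compresses this second step into one line). Your explicit reading-off of the coefficients $c_N$ to get the $\la$-bracket is just the formal Fourier transform step that the paper leaves implicit in its ``in particular''.
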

\begin{proof}
We have

\begin{align*}
[\de_{x_i}[\zb],\de_{x_j}[\wb]]&=[\delta(\xb-\zb)\de_{x_i},\delta(\xb-\wb)\de_{x_j}]\\
&=\delta(\xb-\zb)(\de_{x_i}\delta(\xb-\wb))\de_{x_j}-\delta(\xb-\wb)(\de_{x_j}\delta(\xb-\zb))\de_{x_i}\\
&= -\delta(\wb-\zb)(\de_{w_i}\delta(\xb-\wb))\de_{x_j}-(\de_{w_i}\delta(\wb-\zb))\delta(\xb-\wb)\de_{x_j}\\
&\hspace{5mm} -\delta(\xb-\wb)(\de_{w_j}\delta(\wb-\zb))\de_{x_i},
\end{align*}
by Equation \eqref{derdelta} and Lemma \ref{prel}.

\end{proof}
\begin{lemma}\label{dklladrs}
For all $k,l,r,s$ we have $[d_{kl}[\zb]_\la d_{rs}[\zb]]=[d_{kl},d_{rs}][\zb]$. \end{lemma}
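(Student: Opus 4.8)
The plan is to compute the commutator of the two formal distributions $d_{kl}[\zb]$ and $d_{rs}[\wb]$ directly from the definition $a[\zb]=\delta(\xb-\zb)a$ together with the bracket of two closed two-forms in $A_5$, and then to read off the $N$-products from the locality expansion. The essential simplification, in contrast to Lemma \ref{deiladej}, is that the bracket of two forms in $A_5$ produces a function times a single vector field $\de_{x_{t_{klrs}}}$, with no differentiation falling on the $\delta$-functions; consequently only the $0$-product will survive and the $\lambda$-bracket will collapse to it.

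Concretely, I would first write $d_{kl}[\zb]=\delta(\xb-\zb)d_{kl}$ and $d_{rs}[\wb]=\delta(\xb-\wb)d_{rs}$, and apply the form bracket $[fd_{kl},gd_{rs}]=\varepsilon_{klrs}fg\,\de_{x_{t_{klrs}}}$ with $f=\delta(\xb-\zb)$ and $g=\delta(\xb-\wb)$. Since these coefficients are even (ordinary functions of $\xb$), there are no signs to track, and one obtains
\[
[d_{kl}[\zb],d_{rs}[\wb]]=\varepsilon_{klrs}\,\delta(\xb-\zb)\delta(\xb-\wb)\,\de_{x_{t_{klrs}}}.
\]
Next I would invoke property \eqref{cataliz}, namely $\delta(\xb-\zb)\delta(\xb-\wb)=\delta(\wb-\zb)\delta(\xb-\wb)$, and recognize that $\varepsilon_{klrs}\,\delta(\xb-\wb)\,\de_{x_{t_{klrs}}}=[d_{kl},d_{rs}][\wb]$, so that
\[
[d_{kl}[\zb],d_{rs}[\wb]]=[d_{kl},d_{rs}][\wb]\,\delta(\wb-\zb).
\]

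Finally I would compare this with the locality expansion $\sum_N\frac{1}{N!}c_N(\wb)\de_\wb^N\delta(\wb-\zb)$: no terms with $N\neq 0$ appear, so the unique nonzero $N$-product is $c_0(\wb)=[d_{kl},d_{rs}][\wb]$, and by the definition of the $\lambda$-bracket one gets $[d_{kl}[\zb]_\la d_{rs}[\zb]]=[d_{kl},d_{rs}][\zb]$. There is no real obstacle here; the only point that requires minor care is confirming that the function coefficients are even so that no parity signs intervene, and that property \eqref{cataliz} is applied in the correct direction so as to express the result as a formal distribution in $\wb$ multiplied by $\delta(\wb-\zb)$.
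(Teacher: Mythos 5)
Your proof is correct and follows essentially the same route as the paper: compute the bracket of the two distributions directly in $A_5$ (where, since the coefficients are ordinary functions, it is just the product of the two $\delta$-functions times $[d_{kl},d_{rs}]$) and then apply property \eqref{cataliz} to conclude that only the $0$-product survives. The paper's proof is simply a more condensed version of the same two-line computation.
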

\begin{proof}
We have by \eqref{cataliz}
\[
[\delta(\xb-\zb)d_{kl},\delta(\xb-\wb)d_{rs}]=\delta(\xb-\zb)\delta(\xb-\wb)[d_{kl},d_{rs}]=\delta(\wb-\zb)\delta(\xb-\wb)[d_{kl},d_{rs}].
\]
\end{proof}
\begin{lemma}\label{deiladjk}
We have
\begin{align*}
[\de_{x_i}[\zb],d_{jk}[\wb]]=&-(\de_{w_i}d_{jk}[\wb])\delta(\wb-\zb)-d_{jk}[\wb]\de_{w_i}\delta(\zb-\wb)\\
&+\delta_{ij}\sum_{h\neq k}d_{hk}[\wb]\de_{w_h}\delta(\zb-\wb)-\delta_{ki}\sum_{r\neq j}d_{rj}[\wb]\de_{w_r}\delta(\zb-\wb)
\end{align*}
and so
\[
[\de_{x_i}[\zb]_\la d_{jk}[\zb]]=-\de_{z_i}d_{jk}[\zb]-d_{jk}[\zb]\lambda_i+\delta_{ij}\sum_{h\neq k} d_{hk}[\zb]\lambda_h-\delta_{ki}\sum_{r\neq j}d_{rj}[\zb] \lambda_r.
\]
\end{lemma}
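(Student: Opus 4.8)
The plan is to compute the bracket $[\de_{x_i}[\zb],d_{jk}[\wb]]$ directly from the definitions, exactly as in the proofs of Lemmas \ref{deiladej} and \ref{dklladrs}, and then read off the $\la$-bracket by extracting the coefficients of the $\de_\wb^N\delta(\wb-\zb)$ and setting $\la^N/N!$ in their place. First I would write $\de_{x_i}[\zb]=\delta(\xb-\zb)\de_{x_i}$ and $d_{jk}[\wb]=\delta(\xb-\wb)d_{jk}$ and expand the commutator in $A_5$. Since the bracket on $A_5$ between a vector field $\de_{x_i}$ and a form $d_{jk}$ is given by the Lie derivative, I expect two kinds of terms: those coming from $\de_{x_i}$ differentiating the coefficient $\delta(\xb-\wb)$ of the form (the ``transport'' term), and those coming from the Lie-derivative action of $\de_{x_i}$ on the form $d_{jk}=dx_j\wedge dx_k$ itself, which produces the $\delta_{ij}$ and $\delta_{ki}$ contributions.

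The key computational step is to handle the first term $\delta(\xb-\zb)\,\de_{x_i}\!\big(\delta(\xb-\wb)\,d_{jk}\big)$. Using the Leibniz rule this splits, and the piece where $\de_{x_i}$ hits $\delta(\xb-\wb)$ is exactly the expression treated in Lemma \ref{prel}: I would invoke
\[
\delta(\xb-\zb)\de_{x_i}\delta(\xb-\wb)=-\delta(\wb-\zb)\de_{w_i}\delta(\xb-\wb)-(\de_{w_i}\delta(\wb-\zb))\delta(\xb-\wb),
\]
together with \eqref{derdelta} and \eqref{cataliz1} to localize everything at $\wb$. The point is that property \eqref{cataliz1} lets me replace $\delta(\xb-\zb)$ by $\delta(\wb-\zb)$ once a factor $\delta(\xb-\wb)$ is present, so every coefficient becomes a formal distribution in $\wb$ multiplying some $\de_\wb^N\delta(\wb-\zb)$, which is precisely the locality form. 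For the Lie-derivative terms $\delta_{ij}\sum_{h}$ and $\delta_{ki}\sum_r$, the coefficient $\delta(\xb-\wb)$ is simply carried along and converted to $\delta(\wb-\zb)$ in the same way, yielding the $d_{hk}[\wb]\de_{w_h}\delta(\zb-\wb)$ and $d_{rj}[\wb]\de_{w_r}\delta(\zb-\wb)$ summands; the restrictions $h\neq k$ and $r\neq j$ reflect that $d_{hk}=0$ when $h=k$.

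The main obstacle will be keeping the signs and the roles of the two variable sets straight, particularly the interplay between $\de_{w_i}\delta(\zb-\wb)=-\de_{w_i}\delta(\wb-\zb)$ coming from property (i) of the $\delta$-function and the sign flip in Lemma \ref{prel}. Once the distribution identity is established, the $\la$-bracket formula follows immediately by the definition of the formal Fourier transform: I apply $[a(\zb)_\la b(\zb)]=\sum_N \frac{1}{N!}c_N(\zb)\la^N$, i.e.\ each $\de_\wb^N\delta(\wb-\zb)$ in the commutator is replaced by $\la^N$ and the coefficient $c_N$ is evaluated at $\zb$. Thus $\de_{w_i}\delta(\wb-\zb)$ contributes $\lambda_i$, giving the $-d_{jk}[\zb]\lambda_i$, $+\delta_{ij}\sum_h d_{hk}[\zb]\lambda_h$ and $-\delta_{ki}\sum_r d_{rj}[\zb]\lambda_r$ terms, while the $-(\de_{w_i}d_{jk}[\wb])\delta(\wb-\zb)$ term (the $N=0$ coefficient) yields $-\de_{z_i}d_{jk}[\zb]$, completing the stated formula.
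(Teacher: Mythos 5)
Your proposal follows the paper's own proof essentially step for step: expand the bracket in $A_5$ via the Lie-derivative action (the transport term $\delta(\xb-\zb)(\de_{x_i}\delta(\xb-\wb))d_{jk}$ plus the $\delta_{ij}$, $\delta_{ik}$ contributions from ${\mathcal L}_{\delta(\xb-\zb)\de_{x_i}}(dx_j\wedge dx_k)$), apply Lemma \ref{prel} to the transport term, localize the remaining terms at $\wb$ via \eqref{derdelta} and \eqref{cataliz}, and read off the $\la$-bracket by the formal Fourier transform. The outline and the final formulas agree with the paper.

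One sign slip to correct, though: the identity you single out as the delicate point, $\de_{w_i}\delta(\zb-\wb)=-\de_{w_i}\delta(\wb-\zb)$, is false, and it is not what property (i) gives. Since $\delta(\zb-\wb)=\delta(\wb-\zb)$ as formal distributions, their $w_i$-derivatives are equal with \emph{no} sign; the minus sign appears only when trading a $\zb$-derivative for a $\wb$-derivative, which is property (ii)/\eqref{derdelta}. Your readoff of the $\la$-bracket (e.g.\ the term $-d_{jk}[\zb]\lambda_i$) tacitly uses the correct, signless identification of $\de_{w_i}\delta(\zb-\wb)$ with $\de_{w_i}\delta(\wb-\zb)$, so the conclusion you state is right; but had you actually applied the sign relation you wrote down, the last three terms of the $\la$-bracket would all come out with the wrong sign.
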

\begin{proof}
We have, by Lemma \ref{prel},
\begin{align*}
[\delta(\xb-\zb)& \de_{x_i}, \delta(\xb-\wb)d_{jk}]\\
&=\delta(\xb-\zb)(\de_{x_i}\delta(\xb-\wb))d_{jk}+\delta(\xb-\wb)\big(\delta_{ij}d(\delta(\xb-\zb)dx_k)-\delta_{ik}d(\delta(\xb-\zb)dx_j)\big)\\
&=-(\de_{w_i}\delta(\xb-\wb))d_{jk}\delta(\wb-\zb)-\delta(\xb-\wb)d_{jk}(\de_{w_i}\delta(\wb-\zb))\\
&\hspace{5mm}+\delta(\xb-\wb) \big( \delta_{ij} \sum_{h\neq k}(\de_{x_h}\delta(\xb-\zb))d_{hk}-\delta_{ik}\sum_{r\neq j} (\de_{x_r}\delta(\xb-\zb))d_{rj}\big),\end{align*}
and the statement follows by \eqref{cataliz}.
\end{proof}

Let us denote by $\mathcal Z$ the
$\mathbb F[\deb_{\zb}]$-module generated by the formal distributions $\de_{x_i}[\zb]$ and $d_{jk}[\zb]$.

\begin{lemma}\label{tildeRfree}
 The $\mathbb F[{\deb}_{\zb}]$-module $\mathcal Z$ is free of rank 15 on generators $\de_{x_i}[\zb]$ and $d_{jk}[\zb]$, $i,j,k\in \{1,2,3,4,5\}$, $j<k$.
 \end{lemma}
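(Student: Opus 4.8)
The plan is to prove freeness by establishing that the fifteen listed generators are linearly independent over $\C[\deb_\zb]=\C[\de_{z_1},\dots,\de_{z_5}]$; generation holds by the very definition of $\mathcal Z$, so only independence needs an argument. Thus I would start from a hypothetical relation
\[
\sum_{i=1}^5 P_i(\deb_\zb)\,\de_{x_i}[\zb] + \sum_{j<k} Q_{jk}(\deb_\zb)\,d_{jk}[\zb]=0,
\]
with $P_i,Q_{jk}\in\C[\deb_\zb]$, and show that all the coefficient polynomials vanish.

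The first key step is to decouple this relation. Recall that $A_5$ is free over $B:=\C[[\xb]][x_1^{-1},\dots,x_5^{-1}]$ with basis $\{\de_{x_i}\}\cup\{d_{jk}:j<k\}$, so $A_5[[\zb^{\pm1}]]$ splits componentwise in this basis as $(B[[\zb^{\pm1}]])^{15}$. Since $a[\zb]=\delta(\xb-\zb)\,a$ and $\deb_\zb$ acts only on the $\zb$-coefficients (leaving the basis element $a$ untouched), the term $P_i(\deb_\zb)\de_{x_i}[\zb]$ contributes $P_i(\deb_\zb)\big(\delta(\xb-\zb)\big)$ to the $\de_{x_i}$-component and nothing to the others, and similarly for each $d_{jk}$. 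Reading the relation component by component therefore reduces everything to the single implication
\[
P(\deb_\zb)\big(\delta(\xb-\zb)\big)=0\ \Longrightarrow\ P=0,
\]
i.e.\ to the statement that $\delta(\xb-\zb)$ generates a free rank-one $\C[\deb_\zb]$-module inside $B[[\zb^{\pm1}]]$.

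To finish I would expand $\delta(\xb-\zb)=\prod_{i}\delta(x_i-z_i)=\sum_{M\in\Z^5}\xb^M\zb^{-M-(1,\dots,1)}$ and extract the coefficient of $\xb^0$. Writing $P(\deb_\zb)=\sum_N c_N\,\deb_\zb^N$ (a finite sum, $c_N\in\C$) and using $\de_z^{\,n}(z^{-1})=(-1)^n n!\,z^{-1-n}$, the $\xb^0$-coefficient of $P(\deb_\zb)\delta(\xb-\zb)$ equals $P(\deb_\zb)(z_1^{-1}\cdots z_5^{-1})=\sum_N c_N(-1)^{|N|}N!\,\zb^{-(1,\dots,1)-N}$. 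The monomials $\zb^{-(1,\dots,1)-N}$ are pairwise distinct, so this vanishes only if every $c_N=0$, giving $P=0$ and completing the argument; counting $5$ vector fields and $\binom{5}{2}=10$ forms yields rank $15$.

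The computations here are entirely routine; the only point requiring care is the bookkeeping in the decoupling step, namely making sure that applying $P_i(\deb_\zb)$ respects the $B$-module basis decomposition of $A_5[[\zb^{\pm1}]]$ and does not mix components. I expect that to be the main (mild) obstacle, after which extracting the $\xb^0$-coefficient immediately isolates the scalars $c_N$.
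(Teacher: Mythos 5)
Your proof is correct and follows essentially the same route as the paper's: decouple the relation componentwise using that $A_5$ is free over the Laurent series ring on the basis $\{\de_{x_i}\}\cup\{d_{jk}\}$, and then reduce everything to the implication $P(\deb_{\zb})\delta(\xb-\zb)=0\Rightarrow P=0$. The only difference is that the paper simply asserts this last implication, whereas you verify it by extracting the $\xb^0$-coefficient of $P(\deb_{\zb})\delta(\xb-\zb)$ --- a harmless (indeed welcome) addition, not a different method.
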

 
 \begin{proof}
 We first notice that if $P(\deb_{\zb})$ is any polynomial in $\mathbb F[\deb_{\zb}]$ such that $P(\deb_{\zb})\delta(\xb-\zb)=0$ then $P(\deb_{\zb})=0$. 
 Now let 
 \[
 \sum_{i}P_i(\deb_{\zb})\de_{x_i}[\zb]+\sum_{j<k}Q_{jk}(\deb_{\zb})d_{jk}[\zb]=0
 \]
 for some $P_i(\deb_{\zb}),Q_{jk}(\deb_{\zb})\in \mathbb F[\deb_{\zb}]$.
 Since the set $\{\xb^N \de_{x_i}:\, N\in \mathbb Z^5,\,i=1,\ldots,5\}\cup \{\xb^N d_{jk} :\, N\in \mathbb Z^5,\,j,k=1,\ldots,5,\, j<k\}$ is a linear basis of $A_5$, we have that  $P_i(\deb_{\zb})\delta(\xb-\zb)=0$ and $Q_{jk}(\deb_{\zb})\delta(\xb-\zb)=0$ and the result follows.
 \end{proof}

By Lemma  \ref{tildeRfree} we can identify the $\C[\deb]$-module $\widehat{RE}(5,10)$ with the $\C[\deb_{\zb}]$-module $\mathcal Z$. Moreover the $\la$-brackets on $\widehat{RE}(5,10)$ correspond to brackets of the corresponding formal distributions in $\mathcal Z$ via the formal Fourier transform thanks to Lemmas \ref{deiladej}, \ref{dklladrs} and \ref{deiladjk}. 
We restrict our attention to the following subspace of $\widehat{RE}(5,10)$:
\begin{definition}
 Let $RE(5,10)$ be the $\mathbb F[\deb]$-submodule of $\widehat{RE}(5,10)$ generated by the following elements:
 \begin{itemize}
  \item $a_{ij}=\de_{i}\de_{x_j}-\de_{j}\de_{x_i}$ for all $i,j\in \{1,2,3,4,5\}$;
  \item $b_k=\sum_h \de_{h}d_{hk}$ for all $k\in \{1,2,3,4,5\}$.
 \end{itemize}
\end{definition}
  One can also give the following abstract presentation of  $RE(5,10)$.

 \begin{proposition}\label{relations}
$RE(5,10)$ is  generated  as  a $\mathbb F[{\deb}]$-module by elements $a_{ij}, b_k$, where $i,j,k\in \{1,2,3,4,5\}$, subject to the following relations:
  \begin{enumerate}
  \item $a_{ij}+a_{ji}=0$;
  \item $\de_{h}a_{ij}+\de_{i}a_{jh}+\de_{j}a_{hi}=0$;
  \item $\sum_{k}\de_{k}b_k=0$.
  \end{enumerate}
 \end{proposition}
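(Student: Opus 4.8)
The plan is to build the evident surjection from the abstractly presented module onto $RE(5,10)$ and to prove it is injective by showing that relations (1)--(3) generate all syzygies among the generators. First I would check that the three relations genuinely hold for $a_{ij}=\de_i\de_{x_j}-\de_j\de_{x_i}$ and $b_k=\sum_h\de_h d_{hk}$: relation (1) is immediate; relation (2) holds because the $\de_i$ commute and the six resulting terms cancel in pairs; and relation (3) holds because $\sum_k\de_k b_k=\sum_{h,k}\de_h\de_k d_{hk}=0$, the product $\de_h\de_k$ being symmetric and $d_{hk}$ antisymmetric in $(h,k)$. This yields a surjective morphism $\phi$ of $\C[\deb]$-modules from the presented module $M$ onto $RE(5,10)$, and it remains to prove that every $\C[\deb]$-linear relation among the $a_{ij}$ and $b_k$ is a consequence of (1)--(3). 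At this point I would use that, by Definition \ref{hatRE}, $\widehat{RE}(5,10)$ is the direct sum of the free submodules $\C[\deb]\langle\de_{x_1},\dots,\de_{x_5}\rangle$ and $\C[\deb]\langle d_{jk}\rangle$; hence $RE(5,10)$ splits as the direct sum of its even part (generated by the $a_{ij}$) and its odd part (generated by the $b_k$), and since relations (1)--(2) involve only the $a$'s while (3) involves only the $b$'s, the syzygy problem decouples into two independent computations.

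For the even part I would identify the situation with the Koszul complex of $U=\langle\de_1,\dots,\de_5\rangle$ over $\C[\deb]=\mathrm{Sym}(U)$, via $\de_{x_i}\leftrightarrow\de_i$. After using relation (1) to restrict to $i<j$, the free module on $\{a_{ij}:i<j\}$ becomes $\mathrm{Sym}(U)\otimes\Lambda^2U$, and the assignment $a_{ij}\mapsto\de_i\de_{x_j}-\de_j\de_{x_i}$ is precisely the Koszul differential $d_2\colon\mathrm{Sym}(U)\otimes\Lambda^2U\to\mathrm{Sym}(U)\otimes U$. Since the Koszul complex is acyclic in positive homological degree, $\ker d_2=\mathrm{im}\,d_3$; and $d_3(\de_h\wedge\de_i\wedge\de_j)=\de_h a_{ij}-\de_i a_{hj}+\de_j a_{hi}=\de_h a_{ij}+\de_i a_{jh}+\de_j a_{hi}$ is exactly the left-hand side of relation (2). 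Hence relations (2) generate all even syzygies.

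For the odd part I would argue directly. Writing out $\sum_k P_k b_k=0$ and comparing the coefficient of each $d_{hk}$ with $h<k$ gives $P_k\de_h-P_h\de_k=0$ for all $h,k$. Since $\C[\deb]$ is a unique factorization domain in which the $\de_k$ are pairwise non-associate primes, each such equation forces $\de_k\mid P_k$; writing $P_k=Q_k\de_k$ the equations then force a single common factor $Q_k=Q$, so $(P_k)=Q\,(\de_1,\dots,\de_5)$ and the relation equals $Q$ times relation (3). Combined with the even case, this proves $\phi$ injective, hence an isomorphism.

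I expect the main obstacle to be the even part: one must set up the Koszul identification carefully enough to recognize relations (1)--(2) as \emph{exactly} the Koszul syzygies and then invoke acyclicity of the complex. The odd part is elementary once the coefficient comparison $P_k\de_h=P_h\de_k$ is extracted and the UFD property of $\C[\deb]$ is applied.
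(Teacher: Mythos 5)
Your proof is correct, and its even-part argument is genuinely different from the paper's. The paper proceeds entirely by hand: it expands a putative relation $\sum_k P_k b_k+\sum_{i,j}Q_{ij}a_{ij}=0$ in the free basis $\{\de_{x_i},d_{hk}\}$ of $\widehat{RE}(5,10)$ (the same even/odd decoupling you use), handles the $b$-part exactly as you do (it asserts, without the UFD detail you supply, that $P_h\de_k=P_k\de_h$ for all $h\neq k$ forces $P_k=P\de_k$ for a common $P$), and then disposes of the $a$-part by a normalization-plus-induction argument: relation (2) is used to arrange that each $Q_{ij}$ with $i<j$ involves only the variables $\de_h$ with $h\leq j$, after which a lexicographic induction on the pairs $(i,j)$ shows every $Q_{ij}$ vanishes. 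Your replacement of that last step --- recognizing $a_{ij}\mapsto\de_i\de_{x_j}-\de_j\de_{x_i}$ as the Koszul differential $d_2$ of the regular sequence $\de_1,\dots,\de_5$, so that acyclicity gives $\ker d_2=\mathrm{im}\,d_3$, and computing that $d_3$ applied to the standard basis of $\Lambda^3$ yields exactly relation (2) --- is more conceptual: it explains why (1)--(2) constitute the \emph{complete} set of even syzygies, eliminates the somewhat delicate normalization step, and generalizes verbatim to any number of even variables. What it costs is the homological input (exactness of the Koszul complex of a regular sequence in positive degrees), whereas the paper's induction is elementary and self-contained. Both proofs verify in the same straightforward way that the concrete generators $a_{ij}=\de_i\de_{x_j}-\de_j\de_{x_i}$ and $b_k=\sum_h\de_h d_{hk}$ satisfy (1)--(3).
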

 
 \begin{proof}
 It is a simple verification that the generators $a_{ij}$ and $b_k$ satisfy the stated relations. By construction the elements $d_{jk}$ ($j<k$) and $\de_{x_i}$ are free generators of $\widehat{RE}[5,10]$. 
 Assume we have a relation
 \begin{equation}\label{pq}
 \sum_{k} P_k(\deb)b_k+\sum_{i,j} Q_{ij}(\deb)a_{ij}=0.
 \end{equation}
 Using relation (1), we can assume $Q_{ij}=-Q_{ji}$ and in particular $Q_{ii}=0$.
 Then we have
 \[
\sum_k \sum_{h\neq k} P_k(\deb) \de_{h}d_{hk}+\sum_{i,j}Q_{ij}(\deb)(\de_{i}\de_{x_j}-\de_{j}\de_{x_i})=0
\]
and so 
\[
\sum_{h<k} (P_k(\deb) \de_{h}-P_h(\deb)\de_{k})d_{hk}-\sum_{i}\big(\sum_j 2Q_{ij}(\deb)\de_{j}\big)\de_{x_i}=0.
\]
In particular we have for all $h\neq k$ that $P_h(\deb)\de_{k}=P_k(\deb)\de_{h}$ and hence there exists a polynomial $P(\deb)$ such that $P_k(\deb)=P(\deb)\de_{k}$ for all $k$, hence  the relation involving the $b_k$'s in \eqref{pq} is a consequence of relation (3). 

Using relation (2) systematically, we can assume that if $i<j$ the polynomial $Q_{ij}$ is actually a polynomial in the variables $\de_{h}$ with $h\leq j$. With this assumption we show that all polynomials $Q_{ij}$ vanish by induction on the lexicographic order of the pair of indices $(i,j)$.

If $(i,j)=(1,2)$ the relation $Q_{12}\de_{2}+Q_{13}\de_{3}+Q_{14}\de_{4}+Q_{15}\de_{5}=0$ implies $Q_{12}=0$ since $Q_{12}\in \mathbb F[\de_{1},\de_{2}]$. Let $(i,h)\neq (1,2)$ with $i<h$ be such that $Q_{rs}=0$ for all $(r,s)<(i,h)$. Relation above provides
\[
\sum_{j}Q_{ij}(\deb)\de_{j}=0
\]
which becomes \[
\sum_{j\geq h}Q_{ij}\de_{j}=0
\]
and since $Q_{ih}$ is not a polynomial in $\de_{j}$ for all $j>h$ we deduce that $Q_{ih}=0$.
 \end{proof}
In the next result we compute the $\la$-brackets among generators of $RE(5,10)$.
 \begin{proposition}\label{prodotti}
We have
 \begin{itemize} 
 \item $[{a_{ij} }_\la a_{rs}]=\lambda_j\lambda_r a_{si}+\lambda_i\lambda_r a_{js}+\lambda_i\lambda_s a_{rj}+\lambda_j\lambda_sa_{ir}+
\lambda_i\de_{z_j}a_{rs}+\lambda_j\de_{z_i}a_{sr}$;
\item[]
 \item $[{a_{ij}}_\la {b_k}]= (\lambda_i\de_{z_j}-\lambda_j\de_{z_i})b_k+(\delta_{ik}\lambda_j-\delta_{jk}\lambda_i)\sum_{r}\lambda_rb_r $;
\item[]
\item $[{b_i}_{\la} b_i]=0$;
\item[]
 \item if $i\neq j$, $[{b_i} _\la b_j]=\varepsilon_{ijhkl}(\lambda_ha_{kl}+\lambda_ka_{lh}+\lambda_la_{hk}$) where $\{i,j,h,k,l\}=\{1,2,3,4,5\}$.
  \end{itemize}
 \end{proposition}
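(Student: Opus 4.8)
The plan is to reduce every bracket to the four fundamental brackets of Definition \ref{hatRE} by repeated use of conformal sesquilinearity \eqref{defsupconf1} and \eqref{defsupconf2}, and then to reassemble the outcome into the generators $a_{ij}$ and $b_k$. Throughout we work in type $(5,0)$, so all parities $p_i$ vanish and no signs appear when moving the $\de_h$'s past the odd generators $d_{jk}$; in particular $\de_h d_{hk}=d_{hk}\de_h$. We also identify $\de_h$ with $\de_{z_h}$ via the identification of $\widehat{RE}(5,10)$ with $\mathcal Z$ from Lemma \ref{tildeRfree}. Since $a_{ij}=\de_i\de_{x_j}-\de_j\de_{x_i}$, left sesquilinearity gives $[{a_{ij}}_\la c]=-\lambda_i[{\de_{x_j}}_\la c]+\lambda_j[{\de_{x_i}}_\la c]$, and similarly for the second slot after writing $\de_r\de_{x_s}=\de_{x_s}\de_r$ and applying \eqref{defsupconf2}.

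For the first two identities I would substitute the explicit brackets $[{\de_{x_i}}_\la\de_{x_j}]$ and $[{\de_{x_i}}_\la d_{jk}]$ from Definition \ref{hatRE} and expand. In the $a$-$a$ case the terms of top degree in $\la$ (those carrying three $\lambda$'s and no $\de$) cancel in antisymmetric pairs under $i\leftrightarrow j$, and the surviving terms regroup, using $\de_k\de_{x_l}-\de_l\de_{x_k}=a_{kl}$, into the four quadratic terms $\lambda_j\lambda_r a_{si}+\cdots$ together with $\lambda_i\de_{z_j}a_{rs}+\lambda_j\de_{z_i}a_{sr}$. In the $a$-$b$ case the computation $[{\de_{x_j}}_\la b_k]=\sum_h[{\de_{x_j}}_\la d_{hk}](\de_h+\lambda_h)$ is the crux: the summand $\delta_{jh}\sum_p\lambda_p d_{pk}$ produces exactly the term that cancels the $\sum_h\lambda_h d_{hk}$ piece coming from $-(\de_j+\lambda_j)d_{hk}$, so that, after using $\sum_h\de_h d_{hk}=b_k$ and $\sum_h\de_h d_{rh}=-b_r$, one is left with $[{\de_{x_j}}_\la b_k]=-(\de_j+\lambda_j)b_k+\delta_{kj}\sum_r\lambda_r b_r$. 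Forming $-\lambda_i[{\de_{x_j}}_\la b_k]+\lambda_j[{\de_{x_i}}_\la b_k]$ then yields the stated formula.

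Identity (3) is immediate: the same reduction gives $[{b_i}_\la b_i]=\sum_{p,q}(-\lambda_p)[d_{pi},d_{qi}](\de_q+\lambda_q)$, and each structure constant $\varepsilon_{piqi}$ vanishes because the quadruple $(p,i,q,i)$ repeats the index $i$. For (4) the reduction gives $[{b_i}_\la b_j]=\sum_{p,q}(-\lambda_p)\varepsilon_{piqj}\,\de_{x_{t_{piqj}}}(\de_q+\lambda_q)$, where the sum effectively runs over the six ordered pairs $(p,q)$ of distinct indices from the complementary triple $\{h,k,l\}=\{1,\dots,5\}\setminus\{i,j\}$, the third index being $t_{piqj}$.

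The main obstacle is the sign bookkeeping in (4). After expanding, the pure $\lambda^2\de_x$ terms (those carrying $\lambda_p\lambda_q\de_{x_{t}}$) cancel in pairs $(p,q)\leftrightarrow(q,p)$, since $\varepsilon_{piqj}=-\varepsilon_{qipj}$ (a transposition in the first and third slots) while $\lambda_p\lambda_q$ is symmetric; this sign relation must be read off from the definition of $\varepsilon$. The remaining $\lambda_p\de_q\de_{x_{t}}$ terms are then collected according to the outer factor $\lambda_p$: for each fixed element of $\{h,k,l\}$ the two contributions, obtained by taking the other two indices in either order, combine through $\de_q\de_{x_{t}}-\de_t\de_{x_q}=a_{qt}$ into a single multiple of an $a$-generator, producing $\varepsilon_{ijhkl}(\lambda_h a_{kl}+\lambda_k a_{lh}+\lambda_l a_{hk})$. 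A direct check on one representative, say $(i,j)=(1,2)$ with complementary triple $(3,4,5)$, fixes the overall normalization $\varepsilon_{ijhkl}$ and confirms the cyclic pattern. As a byproduct these computations establish that $RE(5,10)$ is closed under the $\la$-bracket, a point that is not automatic since the ambient $\widehat{RE}(5,10)$ does not satisfy the conformal Jacobi identity.
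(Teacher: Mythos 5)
Your proposal is correct and follows essentially the same route as the paper: both reduce each bracket, via sesquilinearity, to the fundamental $\la$-brackets of Definition \ref{hatRE} applied to $\de_{x_i}$ and $d_{jk}$, expand, and observe that the top-degree $\la$-terms cancel by antisymmetry while the remaining terms regroup into the generators $a_{ij}$ and $b_k$. The only difference is organizational: your uniform intermediate formula $[{\de_{x_j}}_\la b_k]=-(\de_j+\lambda_j)b_k+\delta_{jk}\sum_r\lambda_r b_r$ (which is valid) handles the $a$-$b$ bracket in one stroke, whereas the paper splits into the cases $i,j,k$ distinct and $k=i\neq j$; this is a mild streamlining, not a different method.
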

\begin{proof} Using properties \eqref{defsupconf1}, \eqref{defsupconf2}, \eqref{defsupconf3} of Definition \ref{defsupconf} and Definition \ref{hatRE}, we have:
\begin{align*}
[\de_{i}\de_{x_j}{}_\la\de_{r}\de_{x_s}{}]& =-\lambda_i(\lambda_r+\de_{r})[\de_{x_j}{}_\la\de_{x_s}{}]\\
&=(\lambda_i\lambda_r\de_{j}+\lambda_i\lambda_j\lambda_r+\lambda_i\de_{r}\de_{j}+\lambda_i\lambda_j\de_{r})\de_{x_s}{}+
(\lambda_i\lambda_r\lambda_s+\lambda_i\lambda_s\de_{r})\de_{x_j}{}.
\end{align*}
Using four times this relation we obtain
\begin{align*}
 [{a_{ij}}_\la a_{rs}]&=[\de_{i}\de_{x_j}{}_\la\de_{r}\de_{x_s}{}]-[\de_{j}\de_{x_i}{}_\la\de_{r}\de_{x_s}{}]-[\de_{i}\de_{x_j}{}_\la\de_{s}\de_{x_r}{}]+[\de_{j}\de_{x_i}{}_\la\de_{s}\de_{x_r}{}]\\
&= (\lambda_i\lambda_r\de_{j}-\lambda_j\lambda_r\de_{i}+\lambda_i\de_{r}\de_{j}-\lambda_j\de_{r}\de_{i})\de_{x_s}{}+(\lambda_i\lambda_s \de_{r}-\lambda_i\lambda_r \de_{s})\de_{x_j}{}\\
&\hspace{5mm} +(-\lambda_i\lambda_s\de_{j}+\lambda_j\lambda_s \de_{i}-\lambda_i\de_{s}\de_{j}+\lambda_j \de_{s}\de_{i})\de_{x_r}{} +(-\lambda_j\lambda_s \de_{r}+\lambda_j\lambda_r \de_{s})\de_{x_i}{}\\
&=\lambda_j\lambda_r a_{si}+\lambda_i\lambda_r a_{js}+\lambda_i\lambda_s a_{rj}+\lambda_j\lambda_sa_{ir}+
\lambda_i\de_{j}a_{rs}+\lambda_j\de_{i}a_{sr}.
\end{align*}

In order to compute $[{a_{ij}}_\la b_k]$ we first assume that $i,j,k$ are distinct. We have
\begin{align*}
 [{a_{ij}} _\la b_k]&=\Big[(\de_{i}\de_{x_j}-\de_{j}\de_{x_i})_\la \sum_{h\neq k} \de_{h}d_{hk}\Big]\\
 &=-\lambda_i \sum_{h\neq k}(\de_{h}+\lambda_h)[{\de_{x_j}}_\la d_{hk}]+\lambda_j \sum_{h\neq k} (\de_{h}+\lambda_h)[{\de_{x_i} }_\la d_{hk}]\\
 &= \lambda_i \sum_{h\neq k}(\de_{h}+\lambda_h)(\de_{j}+\lambda_j)d_{hk}-\lambda_i(\de_{j}+\lambda_j)\sum_{r\neq k}\lambda_rd_{rk}\\
 &-\lambda_j\sum_{h\neq k} (\de_{h}+\lambda_h)(\de_{i}+\lambda_i) d_{hk}+\lambda_j(\de_{i}+\lambda_i)\sum_{r\neq k}\lambda_r d_{rk}\\
 &= \lambda_i(\de_{j}+\lambda_j)\sum_{h\neq k}\de_{h}d_{hk}-\lambda_j(\de_{i}+\lambda_i)\sum_{h\neq k}\de_{h}d_{hk}\\
 &= (\lambda_i\de_{j}-\lambda_j\de_{i}) b_k.
 \end{align*}

Now we assume that $i,j,k$ are not distinct and with no loss of generality we can assume that $k=i\neq j$. We have

\begin{align*}
 [{a_{ij}} _\la b_i]&=\Big[(\de_{i}\de_{x_j}-\de_{j}\de_{x_i})_\la \sum_{h\neq i} \de_{h}d_{hi}\Big]\\
 &=-\lambda_i \sum_{h\neq i}(\de_{h}+\lambda_h)[{\de_{x_j}}_\la d_{hi}]+\lambda_j \sum_{h\neq i} (\de_{h}+\lambda_h)[{\de_{x_i} }_\la d_{hi}]\\
 &= \lambda_i\sum_{h\neq i}(\de_{h}+\lambda_h)(\de_{j}+\lambda_j)d_{hi}-\lambda_i(\de_{j}+\lambda_j)\sum_{r\neq i}\lambda_rd_{ri}\\&\hspace{5mm} -\lambda_j\sum_{h\neq i}(\de_{h}+\lambda_h)\big((\de_{i}+\lambda_i) d_{hi}+\sum_{s\neq h}\lambda_s d_{sh}\big)\\
 &= \lambda_i(\de_{j}+\lambda_j) \sum_{h\neq i}\de_{h}d_{hi}-\lambda_j(\de_{i}+\lambda_i) \sum_{h\neq i}(\de_{h}+\lambda_h)d_{hi} -\lambda_j \sum_{h\neq i}\sum_{s\neq h}(\de_{h}+\lambda_h)\lambda_s d_{sh}\\
 &= \lambda_i(\de_{j}+\lambda_j)b_i-\lambda_j(\de_{i}+\lambda_i)b_i-\lambda_j(\de_{i}+\lambda_i)\sum_{h\neq i}\lambda_hd_{hi}\\&\hspace{5mm}-\lambda_j \sum_{h\neq i}\sum_{s\neq h}\de_{h}\lambda_s d_{sh}-\lambda_j\sum_{h\neq i}\sum_{s\neq h}\lambda_h\lambda_s d_{sh}\\
 \end{align*}
In the last sum all terms with $s\neq i$ cancel out and so we have
\begin{align*}
 [{a_{ij}} _\la b_i]&=(\lambda_i\de_{j}-\lambda_j \de_{i})b_i-\lambda_j\de_{i}\sum_{h\neq i}\lambda_h d_{hi}-\lambda_i\lambda_j\sum_{h\neq i}\lambda_h d_{hi}\\&\hspace{5mm}-\lambda_j\sum_s\lambda_s \sum_{h\neq i,s}\de_{h}d_{sh}-\lambda_j \sum_{h\neq i}\lambda_h \lambda_i d_{ih}\\
 &=(\lambda_i\de_{j}-\lambda_j \de_{i})b_i-\lambda_j\sum_{s}\lambda_s\sum_{h\neq s}\de_hd_{sh}\\
 &= (\lambda_i\de_{j}-\lambda_j \de_{i})b_i+\lambda_j\sum_{s}\lambda_s b_s.
\end{align*}

Now we compute
\begin{align*}
[{b_i}_\la b_i]&=\Big[\sum_{h\neq i} {\de_{h}d_{hi}}_\la \sum_{k\neq i} \de_{k}d_{ki}\Big]\\
&=-\sum_{h,k}\lambda_h(\lambda_k+\de_{k})[{d_{hi}}_\la d_{ki}]=0.
\end{align*}
 
Now we compute $[{b_i}_\la b_j]$ for $i\neq j$. Let $h,k,l$ be such that $\varepsilon_{ijhkl}=1$. We have:
\begin{align*}
[{b_i}_\la b_j]&=\Big[\sum_{r\neq i} {\de_{r}d_{ri}}\,_\la \sum_{s\neq j} \de_{s}d_{sj}\Big]\\
&=-\sum_{r,s}\lambda_r(\lambda_s+\de_{s})[{d_{ri}}_\la d_{sj}]\\
&=(\lambda_h\lambda_k+\lambda_h\de_{k}-\lambda_k\lambda_h-\lambda_k\de_{h})\de_{x_l}+
(\lambda_l\lambda_h+\lambda_l\de_{h}-\lambda_h\lambda_l-\lambda_h\de_{l})\de_{x_k}\\
&\hspace{5mm}
+(\lambda_k\lambda_l+\lambda_k\de_{l}-\lambda_l\lambda_k-\lambda_l\de_{k})\de_{x_h}\\
&=\lambda_h(\de_{k}\de_{x_l}-\de_{l}\de_{x_k})+\lambda_k(\de_{l}\de_{x_h}-\de_{h}\de_{x_l})
+\lambda_l(\de_{h}\de_{x_k}-\de_{k}\de_{x_h}).
\end{align*}
\end{proof}

\begin{theorem}
The $\C[\deb]$-module $RE(5,10)$ with the $\la$-bracket induced from $\widehat{RE}(5,10)$ and given in Proposition \ref{prodotti} 
is a Lie conformal superalgebra of type $(5,0)$.
\end{theorem}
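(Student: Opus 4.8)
The plan is to verify the three axioms of Definition \ref{defsupconf} for the $\la$-bracket of Proposition \ref{prodotti}. Since $RE(5,10)$ is by construction a $\C[\deb]$-submodule of $\widehat{RE}(5,10)$, and the formulas of Proposition \ref{prodotti} express every bracket of the generators $a_{ij}$, $b_k$ again as a $\C[\deb]$-combination of the $a$'s and $b$'s, the $\la$-bracket is well defined and $RE(5,10)$ is closed under it (by conformal sesquilinearity this extends from the generators to all elements). Conformal sesquilinearity \eqref{defsupconf1}--\eqref{defsupconf2} holds by construction, as the bracket is extended from the generators precisely by these two rules. Conformal skew-symmetry \eqref{defsupconf3} is inherited from $\widehat{RE}(5,10)$, on which it already holds, since $RE(5,10)$ carries the restricted bracket; alternatively one reads it off directly from the four formulas of Proposition \ref{prodotti}. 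In principle one could also reduce the conformal Jacobi identity to the generators by sesquilinearity and grind it out from Proposition \ref{prodotti}, but I would instead argue conceptually, as follows.

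The whole difficulty is the conformal Jacobi identity \eqref{defsupconf4}, precisely because it \emph{fails} on the ambient module $\widehat{RE}(5,10)$: the underlying coefficient superalgebra $A_5$ is not a Lie superalgebra. The key observation is that the submodule $RE(5,10)$ is realized by formal distributions whose coefficients land in the genuine Lie superalgebra $E(5,10)\subset A_5$. Indeed, using $\de_{z_i}\delta(\xb-\zb)=-\de_{x_i}\delta(\xb-\zb)$ together with Definition \ref{hatRE}, one computes
\begin{align*}
a_{ij}[\zb]&=-\big(\de_{x_i}\delta(\xb-\zb)\big)\de_{x_j}+\big(\de_{x_j}\delta(\xb-\zb)\big)\de_{x_i},\\
b_k[\zb]&=-\sum_h\big(\de_{x_h}\delta(\xb-\zb)\big)d_{hk}.
\end{align*}
Hence every coefficient of $a_{ij}[\zb]$ is a vector field of the form $-\de_{x_i}(g)\de_{x_j}+\de_{x_j}(g)\de_{x_i}$, whose divergence $-\de_{x_j}\de_{x_i}(g)+\de_{x_i}\de_{x_j}(g)$ vanishes, so it lies in $S_5=E(5,10)_{\bar{0}}$; while every coefficient of $b_k[\zb]$ is of the form $-\sum_h\de_{x_h}(g)\,dx_h\wedge dx_k=-d(g\,dx_k)$, an exact, hence closed, two-form lying in $\Omega^2_{cl}=E(5,10)_{\bar{1}}$. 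Applying $\de_{z_m}$ preserves both the divergence-free and the closedness conditions, so the entire $\C[\deb_{\zb}]$-span of these distributions --- the copy of $RE(5,10)$ inside $\mathcal Z$ --- takes values in the subalgebra $\mathfrak a\subset A_5$ spanned by divergence-free vector fields and closed two-forms, which (unlike $A_5$ itself) is a genuine Lie superalgebra, being the Laurent-coefficient analogue of $E(5,10)$ built from the same geometric data.

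With this in hand I would invoke the standard theory of formal distribution Lie superalgebras (cf.\ \cite{K1}): a family of pairwise local formal distributions taking values in a Lie superalgebra $\mathfrak a$, closed under all $N$-products and under $\de_{\zb}$, carries a Lie conformal superalgebra structure, and in particular its $\la$-bracket satisfies the conformal Jacobi identity because the bracket of $\mathfrak a$ does. Here pairwise locality is furnished by Lemmas \ref{deiladej}, \ref{dklladrs} and \ref{deiladjk}, closure under $N$-products is exactly Proposition \ref{prodotti}, closure under $\de_{\zb}$ is automatic, and $\mathfrak a$ is a genuine Lie superalgebra by the previous paragraph. Thus \eqref{defsupconf4} holds on $RE(5,10)$, completing the proof. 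The main obstacle is precisely this passage from $A_5$ to $\mathfrak a$: one must recognize that the generators $a_{ij}$ and $b_k$ are the exact combinations whose associated distributions are divergence-free vector fields and closed two-forms, which is what repairs the Jacobi identity violated on $\widehat{RE}(5,10)$.
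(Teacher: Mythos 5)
Your proposal is correct, but it takes a genuinely different route from the paper's own proof of this theorem. The paper argues computationally: after noting that Proposition \ref{prodotti} yields closure, conformal sesquilinearity and skew-symmetry, it verifies the conformal Jacobi identity directly on generators, splitting into the four cases $(a_{ij},a_{rs},a_{mn})$, $(b_i,b_j,b_k)$, $(a_{ij},a_{rs},b_k)$, $(a_{ij},b_h,b_k)$; the first is reduced to the $W$-type computation of Proposition \ref{4.2} (since the $a_{ij}$ lie in the $\C[\deb]$-span of the $\de_{x_i}$), the second and third are ground out explicitly, and the rest is left to the reader. You instead prove the Jacobi identity conceptually, observing that the distributions $a_{ij}[\zb]$ and $b_k[\zb]$ have coefficients not merely in $A_5$ (where Jacobi fails) but in the subalgebra of divergence-free Laurent vector fields and closed (indeed exact) Laurent two-forms; your identifications of the coefficients as $-\de_{x_i}(g)\de_{x_j}+\de_{x_j}(g)\de_{x_i}$ and $-d(g\,dx_k)$ are correct, and the locality/Fourier-transform formalism then delivers the conformal Jacobi identity. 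Notably, this is exactly the strategy the paper itself adopts later for $RE(3,6)$ and $RE(3,8)$, where Jacobi is settled ``as a consequence of the fact that we constructed $RE(3,6)$ as a space of formal distributions with coefficients in the Lie superalgebra $\tilde{\mathfrak g}$''; so your argument is shorter and explains \emph{why} the identity, which is violated on $\widehat{RE}(5,10)$, is repaired on the submodule, whereas the paper's computation is self-contained. The price of your route is twofold and deserves acknowledgment: (a) you need the several-variable analogue of the one-variable statement in \cite{K1} that pairwise local distributions valued in a Lie superalgebra have a $\la$-bracket satisfying the conformal Jacobi identity (a routine extension, used implicitly throughout the paper); and (b) you need that the Laurent-coefficient analogue of $E(5,10)$ really is a Lie superalgebra, which the paper never establishes -- for $E(5,10)$ it deliberately works inside the non-Lie superalgebra $A_5$. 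Point (b) does hold, because the Jacobi identity of $E(5,10)$ is an identity of differential polynomials in the coefficient functions and is therefore insensitive to replacing formal power series by Laurent polynomials; but since this is precisely the point the paper's computational proof is designed to avoid, it merits an explicit argument rather than the phrase ``built from the same geometric data''.
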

\begin{proof}
By Proposition \ref{prodotti} 
the $\la$-bracket actually restricts to a linear map
\[
[\,.\,_\la\,.\,]:RE(5,10)\times RE(5,10) \rightarrow \mathbb F[\la]\otimes RE(5,10)
\]
satisfying conformal sesquilinearity and conformal skew-symmetry. We need to prove the conformal Jacoby identity
for the triple $(a,b,c)$ in the following four cases
\begin{enumerate}
\item $(a,b,c)=(a_{ij},a_{rs}, a_{mn})$;
\item $(a,b,c)=(b_i,b_j,b_k)$;
\item $(a,b,c)=(a_{ij},a_{rs}, b_k)$;
\item $(a,b,c)=(a_{ij}, b_h,b_k)$.
\end{enumerate}

(1) Since the elements $a_{ij}$ belong to the submodule of $\widehat{RE}(5,10)$ generated by the elements $\de_{x_i}$'s it is enough to consider the Jacobi identity for elements $\de_{x_i},\de_{x_j}, \de_{x_k}$ and this can be verified as in Proposition \ref{4.2}.

(2) We have to show that
\[
[b_i {}_\la[b_j {}_\mub b_k]]=[[b_i{}_\la b_j]_{\la+\mub}b_k]-[b_j{}_\mub [b_i{}_\la b_k]].
\]
For distinct $i,j,k$ let $r,s$ be such that $\varepsilon_{ijkrs}=1$. We have:
\begin{align*}
[{b_i}_\la[{b_j}_\mub b_k]]&=[{b_i}_\la (\mu_ia_{rs}+\mu_ra_{si}+\mu_sa_{ir})]\\
&=-\mu_i(-\lambda_r\de_{s}+\lambda_s\de_{r})b_i+\mu_r(-\lambda_i\de_{s}b_i+\lambda_s\de_{i}b_i+
(\lambda_s+\de_{s})\sum_h(\lambda_h+\de_{h})b_h)\\
&\hspace{5mm}-\mu_s(-\lambda_i\de_{r}b_i+\lambda_r\de_{i}b_i+
(\lambda_r+\de_{r})\sum_h(\lambda_h+\de_{h})b_h)\\
&=(\lambda_s\mu_r\de_{i}-\lambda_i\mu_r\de_{s}+\lambda_i\mu_s\de_{r}-\lambda_r\mu_s\de_{i}+\lambda_r\mu_i\de_{s}-\lambda_s\mu_i\de_{r})b_i\\
&\hspace{5mm}+(\lambda_s\mu_r+\mu_r\de_{s}-\lambda_r\mu_s-\mu_s\de_{r})\sum_{h}\lambda_hb_h.
\end{align*}

Similarly we can compute
\begin{align*}
[[b_i{}_\la b_j]_{\la+\mub}b_k]&=(\lambda_k\mu_r\de_{s}-\lambda_k\mu_s\de_{r}+\lambda_r\mu_s\de_{k}-\lambda_r\mu_k\de_{s}+\lambda_s\mu_k\de_{r}-\lambda_s\mu_r\de_{k})b_k\\
&\hspace{5mm}+(\lambda_s\mu_r-\lambda_r\mu_s)\sum_{h}(\lambda_h+\mu_h)b_h
\end{align*}
and
\begin{align*}
[b_j{}_\mub [b_i{}_\la  b_k]&=-\Big(\big(\lambda_r\mu_s\de_{j}-\lambda_r\mu_j\de_{s}+\lambda_s\mu_j\de_{r}-\lambda_s\mu_r\de_{j}+\lambda_j\mu_r\de_{s}-\lambda_j\mu_s\de_{r}\big)b_j\\
&\hspace{5mm}+\big(\lambda_r\mu_s+\lambda_r\de_{s}-\lambda_s\mu_r-\lambda_s\de_{r}\big)\sum_h \mu_h b_h\Big).
\end{align*}
So we have

\begin{align*}
[&b_i {}_\la[b_j {}_\mub b_k]]-[[b_i{}_\la b_j]_{\la+\mub}b_k]+[b_j{}_\mub [b_i{}_\la b_k]]\\
&=(\lambda_s\mu_r\de_{i}-\lambda_i\mu_r\de_{s}+\lambda_i\mu_s\de_{r}-\lambda_r\mu_s\de_{i}+\lambda_r\mu_i\de_{s}-\lambda_s\mu_i\de_{r})b_i\\
&\hspace{5mm}+(-\lambda_k\mu_r\de_{s}+\lambda_k\mu_s\de_{r}-\lambda_r\mu_s\de_{k}+\lambda_r\mu_k\de_{s}-\lambda_s\mu_k\de_{r}+\lambda_s\mu_r\de_{k})b_k\\
&\hspace{5mm} +\big(-\lambda_r\mu_s\de_{j}+\lambda_r\mu_j\de_{s}-\lambda_s\mu_j\de_{r}+\lambda_s\mu_r\de_{j}-\lambda_j\mu_r\de_{s}+\lambda_j\mu_s\de_{r}\big)b_j\\
&\hspace{5mm}+(\mu_r\de_{s}-\mu_s \de_{r})\sum_h \lambda_h b_h+ (-\lambda_r\de_{s}+\lambda_s \de_{r})\sum_h \mu_h b_h\\
&=(\lambda_s\mu_r\de_{i}-\lambda_r\mu_s \de_{i})b_i+(\lambda_s\mu_r\de_{k}-\lambda_r\mu_s\de_{k})b_k+(\lambda_s\mu_r\de_{j}-\lambda_r\mu_s\de_{j})b_j\\
&\hspace{5mm} +(\lambda_s\mu_r\de_{r}-\lambda_r\mu_s\de_{r})b_r+(\lambda_s\mu_r\de_{s}-\lambda_r\mu_s\de_{s})b_s\\
&=(\lambda_s\mu_r-\lambda_r\mu_s)\sum_h\de_{h}b_h\\
&=0.
\end{align*}

(3) First assume $i,j,r,s,k$ are distinct.
We have
\begin{align*}
[a_{ij}{}_\la [a_{rs}{}_\mub b_k]]&=[a_{ij}{}_\la (\mu_r\de_{s}-\mu_s \de_{r})b_k]\\
&=\big((\mu_r(\de_{s}+\lambda_s)-\mu_s(\de_{r}+\lambda_r))(\lambda_i \de_{j}-\lambda_j \de_{i})\big)b_k\\
&=\big((\mu_r\de_{s}-\mu_s\de_{r})-(\lambda_r\mu_s-\lambda_s\mu_r)\big)(\lambda_i \de_{j}-\lambda_j \de_{i})\big)b_k
\end{align*}
and
\begin{align*}
[[a_{ij}{}_\la a_{rs}]{}_{\la+\mub}b_k]&=\big[\big(\lambda_j\lambda_r a_{si}+\lambda_i\lambda_r a_{js}+\lambda_i\lambda_s a_{rj}+\lambda_j\lambda_s a_{ir}+(\lambda_i\de_{j}-\lambda_j\de_{i})a_{rs}\big)_{\la+\mub}b_k\big]\\
&=\big(\lambda_j\lambda_r((\lambda_s+\mu_s)\de_{i}-(\lambda_i+\mu_i)\de_{s})+\lambda_i\lambda_r((\lambda_j+\mu_j)\de_{s}-(\lambda_s+\mu_s)\de_{j})\\
&\hspace{5mm}+\lambda_i\lambda_s((\lambda_r+\mu_r)\de_{j}-(\lambda_j+\mu_j)\de_{r})+\lambda_j\lambda_s((\lambda_i+\mu_i)\de_{r}-(\lambda_r+\mu_r)\de_{i})\\
&\hspace{5mm}+(\lambda_i(-\lambda_j-\mu_j)-\lambda_j(-\lambda_i-\mu_i))((\lambda_r+\mu_r)\de_{s}-(\lambda_s+\mu_s)\de_{r})\big)b_k\\
&= -(\lambda_i\de_{j}-\lambda_j\de_{i})(\lambda_r\mu_s-\lambda_s\mu_r)-(\lambda_i\mu_j-\lambda_j\mu_i)(\mu_r\de_{s}-\mu_s\de_{r})
\end{align*}
and finally
\[
[a_{rs}{}_\mub [a_{ij}{}_\mub b_k]]=\big((\lambda_i\de_{j}-\lambda_j\de_{i})-(\mu_i\lambda_j-\mu_j\lambda_i)\big)(\mu_r \de_{s}-\mu_s \de_{r})\big)b_k
\]
and so the conformal Jacobi identity follows also in this case.

The other cases can be carried out similarly.
\end{proof}
We observe that $RE(5,10)$ is a $\Z$-graded Lie conformal superalgebra if we  set
$$\deg(a_{ij})=-4,\,\,\, \deg(b_k)=-3.$$
\begin{proposition}\label{dimE510}
Let $k\geq 0$. Then
\[
\dim \mathfrak{g}(RE(5,10))_{2k-4}\leq\frac{1}{6}k(k+1)(k+2)(k+4)
\]
and
\[
\dim \mathfrak{g}(RE(5,10))_{2k-3}\leq\frac{1}{6}k(k+2)(k+3)(k+4).
\] 
\end{proposition}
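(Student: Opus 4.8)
The plan is to compute the graded dimensions of $\mathcal A(RE(5,10))$ (which equals those of $\mathfrak g(RE(5,10))$, since $\mathfrak g(R)\cong\mathcal A(R)$ by Assumptions \ref{assumption}) by realizing it as a relative tensor product and feeding in the presentation of Proposition \ref{relations}. Write $R=RE(5,10)$, $S=\C[\deb]$, and let $T=\C[[\yb]]$ be the $S$-module with $\de_i\cdot f=-\de_{y_i}f$. Directly from Definition \ref{annihilationalgebra}, since $\tilde\deb=\deb+\deb_\yb$, the relation $(\de_i+\de_{y_i})\tilde R=0$ reads $\de_i r\otimes f=-r\otimes\de_{y_i}f$, so that
\[
\mathcal A(R)\cong R\otimes_{S}T .
\]
Under $\deg(a\yb_M)=\deg(a)+2\ell(M)$, the component of degree $2k-4$ is built from the even generators $a_{ij}$ (a copy of $\bigwedge^2 V$, with $V$ the $5$-dimensional span of the indices) tensored with the $\yb$-monomials of degree $k$, while the degree $2k-3$ component comes from the odd generators $b_k$ (a copy of $V$). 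Since relations (1),(2) of Proposition \ref{relations} involve only the $a_{ij}$ and relation (3) only the $b_k$, we get $R=R^{(a)}\oplus R^{(b)}$ together with the free presentations
\[
\textstyle\bigwedge^3 V\otimes S\xrightarrow{\;d_3\;}\bigwedge^2 V\otimes S\to R^{(a)}\to 0,\qquad S\xrightarrow{\,1\mapsto\sum_k\de_k e_k\,}V\otimes S\to R^{(b)}\to 0,
\]
where $d_3$ is the Koszul contraction $e_h\wedge e_i\wedge e_j\mapsto \de_h\,e_i\wedge e_j+\de_i\,e_j\wedge e_h+\de_j\,e_h\wedge e_i$ (matching relation (2) after the identification $a_{ij}\leftrightarrow e_i\wedge e_j$).

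Next I would apply the right-exact functor $-\otimes_S T$, writing $\bar d_p\colon\bigwedge^p V\otimes T\to\bigwedge^{p-1}V\otimes T$ for the induced contraction with the operators $\de_{y_i}$. For the even part this gives $R^{(a)}\otimes_S T=\operatorname{coker}\bar d_3$. The key input is that the Koszul complex $\bigwedge^\bullet V\otimes\C[[\yb]]$ with contraction differential is acyclic in positive homological degree: Hodge-dualizing $\bigwedge^p V\cong\bigwedge^{5-p}V^*$ turns contraction into the exterior derivative, so its homology is that of the formal de Rham complex of $\C[[\yb]]$, which vanishes outside degree $0$ by the formal Poincaré lemma. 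Hence $\operatorname{im}\bar d_3=\ker\bar d_2$ and $R^{(a)}\otimes_S T\cong\operatorname{im}\bar d_2=\ker\bar d_1$, i.e.\ the formal divergence-free vector fields $\{\sum_i f_i e_i : \sum_i\de_{y_i}f_i=0\}$, which is exactly $S_5=E(5,10)_{\bar 0}$. For the odd part, $R^{(b)}\otimes_S T=\operatorname{coker}\bigl(\mu\colon T\to V\otimes T\bigr)$ with $\mu(f)=-\sum_k\de_{y_k}f\,e_k$ the gradient, which is injective on $\yb$-monomials of positive degree.

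It then remains to count. In degree $2k-4$ the divergence-free fields have coefficients $f_i\in T_{k-1}$, and since $\diver$ maps onto $T_{k-2}$ we get
\[
\dim\mathfrak g(R)_{2k-4}=5\binom{k+3}{4}-\binom{k+2}{4}=\tfrac16\, k(k+1)(k+2)(k+4).
\]
In degree $2k-3$ the target $(V\otimes T)_{2k-3}$ has dimension $5\binom{k+4}{4}$ while $\operatorname{im}\mu$ has dimension $\binom{k+5}{4}$, giving
\[
\dim\mathfrak g(R)_{2k-3}=5\binom{k+4}{4}-\binom{k+5}{4}=\tfrac16\, k(k+2)(k+3)(k+4).
\]
In fact this yields equalities, which a fortiori give the asserted bounds.

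The main obstacle is the even case, and specifically the acyclicity statement: it is exactly what upgrades the trivial surjection $\bigwedge^2 V\otimes T\twoheadrightarrow R^{(a)}\otimes_S T$ to the sharp identification with divergence-free fields, and the formal Poincaré lemma is the nontrivial ingredient one must justify carefully (together with surjectivity of the divergence and injectivity of the gradient in the relevant degrees). If one only wants the stated inequality, the acyclicity can be bypassed: reading relations (2),(3) through $\de_i=-\de_{y_i}$ as $\sum_{\mathrm{cyc}} a_{ij}\otimes\de_{y_h}t=0$ and $\sum_k b_k\otimes\de_{y_k}t=0$ lets one rewrite every $a_{ij}\yb_M$ and $b_k\yb_M$ in terms of a distinguished family of "standard" monomials whose cardinality is the right-hand side, so that counting standard monomials gives $\le$ directly; there the only delicate point is the combinatorial bookkeeping of the reduction. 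The odd part is in all cases elementary linear algebra.
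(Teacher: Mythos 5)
Your proof is correct, but it follows a genuinely different route from the paper's. The paper's own argument is purely combinatorial: it pushes relations (2) and (3) of Proposition \ref{relations} into the annihilation algebra (where $\de_i=-\de_{y_i}$) and uses them as straightening rules with respect to a total order, producing explicit spanning sets $\mathcal B\subseteq\mathfrak g(RE(5,10))_{2k-4}$ and $\mathcal B'\subseteq\mathfrak g(RE(5,10))_{2k-3}$ whose cardinalities are the two stated polynomials --- this is exactly the ``standard monomial'' alternative you sketch in your closing paragraph but do not carry out. Your main argument instead realizes $\mathcal A(R)\cong R\otimes_{\C[\deb]}T$ with $T=\C[[\yb]]$, feeds in the presentation of Proposition \ref{relations}, and uses right-exactness of $-\otimes_{\C[\deb]}T$ together with acyclicity of the contraction (Koszul) complex $\bigwedge^{\bullet}V\otimes T$ (in your notation $V\cong\C^5$), i.e.\ the formal Poincar\'e lemma after Hodge duality, to identify the even part with divergence-free formal vector fields and the odd part with the cokernel of the gradient; your counts $5\binom{k+3}{4}-\binom{k+2}{4}$ and $5\binom{k+4}{4}-\binom{k+5}{4}$ indeed equal the stated polynomials. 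The trade-off is as follows: the paper's proof is elementary and, for an upper bound, needs only that the relations \emph{hold} (spanning holds by construction), with equality of dimensions deferred to Corollary \ref{eqdim} and Theorem \ref{E(5,10)}; your proof invokes the full strength of Proposition \ref{relations} (that these are \emph{all} the relations) plus the Poincar\'e lemma, but in return it yields exact dimensions and essentially identifies the even part of $\mathfrak g(R)$ with $S_5=E(5,10)_{\bar 0}$ outright, which would make the injectivity of $\Phi$ in Theorem \ref{E(5,10)} automatic (a graded surjection between spaces of equal finite dimension). Two points you should make explicit in a write-up: (i) exactness of the contraction complex is used degreewise, which is legitimate because the Poincar\'e homotopy preserves the grading --- this is what justifies the identification of the degree-$(2k-4)$ component with the divergence-free fields having coefficients in $T_{k-1}$, via $\ker\bar d_2=\operatorname{im}\bar d_3$ and $\bar d_2\bigl(\bigwedge^2V\otimes T_k\bigr)=\ker\bar d_1\cap(V\otimes T_{k-1})$; and (ii) for the inequality as stated you need even less than the full presentation, since the surjection $\operatorname{coker}\bar d_3\oplus\operatorname{coker}\mu\twoheadrightarrow\mathcal A(R)$ already gives the upper bound.
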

\begin{proof}
Consider the elements $a_{ij}\yb ^R$ with $|R|=k$ which generate  $\mathfrak{g}(RE(5,10))_{2k-4}$. Since $a_{ij}=-a_{ji}$ we can always assume that $i<j$. We show that the set 
\[\mathcal B=\{a_{i\,i+1}\yb^{M+e_i} : \,|M|=k-1,\,i=1,\ldots,4\}\cup \{a_{12}\yb^{N+e_2}: \, |N|=k-1,\, y_1\not | \yb^N\}\] 
spans $\mathfrak{g}(RE(5,10))_{2k-4}$.
 
To show this we choose a total order $\preceq$ on the monomials $a_{ij}\yb ^R$ with $i<j$ such that 
\begin{itemize}
\item if $h\leq i<j\leq k$ then $a_{ij}\yb ^R\preceq a_{hk} \yb^S$ for all $R,S$.
\item if $i<j<k$ then $a_{ij}\yb ^R\prec a_{jk} \yb^S$ for all $R,S$.
\end{itemize}
We prove that for every element $a_{ij}\yb ^R$ that is not in $\mathcal B$ there exists a relation
\begin{equation}\label{evre}
a_{ij} \de_{y_h}\yb^M+a_{jh} \de_{y_i}\yb^M+ a_{hi}\de_{y_j}\yb^M=0,
\end{equation}
which expresses it as  a linear combination of smaller elements. 

Let $a_{ij}\yb^R\notin B$. 
If $j-i>2$ we let $i<h<j$ and  we have
\[a_{ij}\yb^R=\frac{1}{r_h+1}a_{ij}\de_{y_h} y_h \yb^R=\frac{1}{r_h+1} \big(a_{ih}\de_{y_j}(y_h\yb^R)-a_{jh}\de_{y_i} (y_h \yb ^R)  \big).\]

Let us consider the case $j=i+1$. If $y_i,y_{i+1}$ do not divide $\yb^R$ we have 
\[a_{i\, i+1} \yb ^R=\frac{1}{r_h+1} \big(a_{ih}\de_{y_{i+1}}(y_h\yb^R)-a_{i+1\,h}\de_{y_i} (y_h \yb ^R)  \big)=0,\]
where $h$ is any index distinct from $i$, $i+1$. 
So we can assume that $i>1$ and that $y_{i+1}$ is a divisor of  $\yb^R$ but $y_{i}$ is not (otherwise $a_{i\,i+1}\yb^R\in \mathcal B$). We have 
\[
a_{i\, i+1} \yb ^R=\frac{1}{r_{i-1}+1} \big(a_{i\, \,i-1}\de_{y_{i+1}}(y_{i-1}\yb^R)-a_{i+1\,i-1}\de_{y_i} (y_{i-1} \yb ^R)  \big)=-\frac{1}{r_{i-1}+1} a_{i-1\,i}\de_{y_{i+1}}(y_{i-1}\yb^R).
\]
So the  dimension of $\mathfrak{g}(RE(5,10))_{2k-4}$ is less than or equal to the cardinality of $\mathcal B$, i.e.
\[
4\binom{5+k-2}{4}+\binom{4+k-2}{4}=\frac{1}{6}k(k+1)(k+2)(k+4).
\]
One can similarly show that the set
\[
\mathcal B '=\{b_i\yb^M:\, y_j|\yb^M \textrm{ for some }j>i, |M|=k\}
\]
is a generating set of $\mathfrak{g}(RE(5,10))_{2k-3}$. The proof is analogous and simpler than in the former case and is based on the observation that if $b_i \yb^M \notin \mathcal B'$  then
\[b_i \yb^M=\frac{1}{m_i+1}\de_{y_i}y_i \yb^M=-\frac{1}{m_i+1}\sum_{j\neq i}b_j\de_{y_j} y_i \yb^M=-\frac{1}{m_i+1}\sum_{j<i}b_j\de_{y_j} y_i \yb^M.
\]
So the dimension of $\mathfrak{g}((5,10))_{2k-3}$ is at most
\[
4\binom{k+3}{4}+3\binom{k+2}{3}+2\binom{k+1}{2}+\binom{k}{1}=\frac{1}{6}k(k+2)(k+3)(k+4).
\]

\end{proof}
\begin{corollary}\label{eqdim}
For all $d\in \mathbb Z$
\[
\dim E(5,10)_d\geq \dim \mathfrak g(RE(5,10))_d.
\]
\end{corollary}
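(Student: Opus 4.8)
The plan is to compute the dimensions of the graded components $E(5,10)_d$ directly and to verify that they coincide with the upper bounds for $\dim\mathfrak g(RE(5,10))_d$ established in Proposition \ref{dimE510}; since those bounds turn out to be attained by $E(5,10)$, the asserted inequality follows immediately. For $d<-2$ both sides vanish, so I only need to treat $d\geq -2$, which I would organize by parity, matching the even degrees $d=2k-4$ (vector fields, with $k\geq 1$) and the odd degrees $d=2k-3$ (closed two-forms, with $k\geq 1$) of $E(5,10)$ against the two estimates of Proposition \ref{dimE510}.

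For an even degree $d=2k-4$ the component $E(5,10)_{2k-4}$ consists of the divergence-free vector fields $\sum_i f_i\de_{x_i}$ whose coefficients $f_i$ are homogeneous of degree $k-1$. I would realize it as the kernel of the divergence operator from the space of all such vector fields, of dimension $5\binom{k+3}{4}$, to the homogeneous polynomials of degree $k-2$, of dimension $\binom{k+2}{4}$. The key point is that $\diver$ is surjective onto this target (for given $g$ a preimage is obtained by a formal antiderivative in $x_1$), whence $\dim E(5,10)_{2k-4}=5\binom{k+3}{4}-\binom{k+2}{4}=\tfrac16 k(k+1)(k+2)(k+4)$, which is exactly the bound of Proposition \ref{dimE510}.

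For an odd degree $d=2k-3$ the component $E(5,10)_{2k-3}=\Omega^2_{cl}$ consists of the closed two-forms with coefficients homogeneous of degree $k-1$, so that their total degree (counting each $x_i$ and each $dx_i$ with weight $1$) equals $k+1$. Here the essential ingredient is the homogeneous Poincar\'e lemma: in any fixed positive total degree the de Rham complex $\Omega^0\to\Omega^1\to\Omega^2\to\cdots$ is exact, a contracting homotopy being $\tfrac1{k+1}\iota_E$ with $E=\sum_i x_i\de_{x_i}$ the Euler field, since $L_E=d\iota_E+\iota_E d$ acts as multiplication by the total degree. Counting dimensions along the first three terms of this complex then gives $\dim E(5,10)_{2k-3}=5\binom{k+4}{4}-\binom{k+5}{4}=\tfrac16 k(k+2)(k+3)(k+4)$, again precisely the bound of Proposition \ref{dimE510}.

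Putting the two computations together with the corresponding estimates shows that for every $d$ the dimension $\dim E(5,10)_d$ equals the relevant bound of Proposition \ref{dimE510} for $\dim\mathfrak g(RE(5,10))_d$, and hence $\dim E(5,10)_d\geq\dim\mathfrak g(RE(5,10))_d$. I expect the odd case to be the main obstacle: the even component is governed by the wholly elementary surjectivity of the divergence, whereas counting \emph{closed} two-forms forces one to invoke exactness of the de Rham complex in order to pass from the trivial count of all two-forms to that of the closed ones. Minor care is needed at the small values $k=1$ (and at $d<-2$), where the target of the divergence, respectively the space of constant functions, is zero; there the surjectivity and the Poincar\'e arguments degenerate harmlessly and the displayed formulas continue to hold.
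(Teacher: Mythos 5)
Your proof is correct, and its top-level structure is the same as the paper's: compute $\dim E(5,10)_d$ exactly, observe that it equals the upper bound of Proposition \ref{dimE510}, and conclude. Where you genuinely differ is in how that dimension is obtained. The paper disposes of it in one line by quoting \cite{CK2}: every homogeneous component of $E(5,10)$ is an irreducible $\mathfrak{sl}_5$-module with known highest weight, so its dimension can be read off (e.g.\ via the Weyl dimension formula). You replace this citation by a self-contained, elementary count. In even degree $2k-4$ you use the divergence map from the space of all vector fields with homogeneous coefficients of degree $k-1$ (dimension $5\binom{k+3}{4}$) onto homogeneous polynomials of degree $k-2$ (dimension $\binom{k+2}{4}$); surjectivity via a formal antiderivative is indeed immediate, and $5\binom{k+3}{4}-\binom{k+2}{4}=\tfrac16 k(k+1)(k+2)(k+4)$. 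In odd degree $2k-3$ you use exactness of the homogeneous de Rham complex in positive total degree, justified by the Euler homotopy $d\iota_E+\iota_E d=L_E$ acting as multiplication by the total degree; hence closed two-forms of total degree $k+1$ are exact, and counting along $\Omega^0\to\Omega^1\to\Omega^2$ gives $5\binom{k+4}{4}-\binom{k+5}{4}=\tfrac16 k(k+2)(k+3)(k+4)$. Both identities are correct, and the boundary cases degenerate as you claim; the only pedantic point is that for $d=-4,-3$ the vanishing of $\dim\mathfrak g(RE(5,10))_d$ is itself the $k=0$ instance of Proposition \ref{dimE510}, so your preliminary reduction to $d\ge -2$ already (harmlessly) uses that proposition. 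What your route buys is independence from the structure theory of \cite{CK2} and from the Weyl dimension formula, at the cost of a longer argument; what the paper's route buys is brevity, given results it cites anyway. The final step --- invoking Proposition \ref{dimE510} --- is identical in both.
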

\begin{proof}
Using that all homogeneous components $E(5,10)_d$ are irreducible $E(5,10)_0=\mathfrak{sl}_5$-modules with
known highest weights (\cite{CK2}),
one can check that the dimension
 of  $E(5,10)_{2k-4}$ is  
 $\frac{1}{6}k(k+1)(k+2)(k+4)$ and 
the dimension of  $E(5,10)_{2k-3}$ is $\frac{1}{6}k(k+2)(k+3)(k+4)$. Hence corollary follows from Proposition \ref{dimE510}.
\end{proof}
\begin{theorem}\label{E(5,10)} The annihilation algebra $\mathfrak{g}(RE(5,10))$  is isomorphic, as a $\Z$-graded
Lie superalgebra, to $E(5,10)$ with principal gradation under the following map $\Phi$:
$$a_{ij}\yb^M\mapsto -m_i\xb^{M-e_i}\de_{x_j}+m_j\xb^{M-e_j}\de_{x_i};$$
$$b_k\yb^M\mapsto \sum_{r\neq k}m_r\xb^{M-e_r}d_{kr}.$$
\end{theorem}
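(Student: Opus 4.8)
The plan is to show that $\Phi$ is a well-defined, graded, bijective homomorphism of Lie superalgebras. Recall that by Assumptions \ref{assumption}(4) we have $\mathfrak g(RE(5,10))\cong\mathcal A(RE(5,10))$, which is spanned by the elements $a_{ij}\yb^M$ and $b_k\yb^M$. First I would verify that $\Phi$ is well defined, i.e.\ that the given formulas respect every relation among these spanning elements. In $\mathcal A(R)$ one has $(\de_i c)\yb^M=-m_i\,c\,\yb^{M-e_i}$, since $\tilde\de_i=\de_i+\de_{y_i}$ acts as zero; hence the presentation of Proposition \ref{relations}, after multiplication by $\yb^M$, reduces to $a_{ij}\yb^M+a_{ji}\yb^M=0$, to relation \eqref{evre}, and to the relation $\sum_k m_k\,b_k\,\yb^{M-e_k}=0$ coming from relation (3). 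A direct substitution into the defining formulas shows that $\Phi$ annihilates each of these. I would also record that $\Phi$ is graded: with $\deg(a_{ij})=-4$, $\deg(b_k)=-3$ and $\deg(c\,\yb^M)=\deg(c)+2|M|$ on the source, and $\deg x_i=-\deg\de_{x_i}=2$ on $E(5,10)$, the elements $\Phi(a_{ij}\yb^M)$ and $\Phi(b_k\yb^M)$ land in $E(5,10)_{2|M|-4}$ and $E(5,10)_{2|M|-3}$ respectively.

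Next I would check that $\Phi$ preserves brackets. On the source the bracket is obtained from the $\la$-brackets of Proposition \ref{prodotti} via Definition \ref{annihilationalgebra}, namely $[\yb_M c,d\,\yb_N]=[(\yb_M c)_\la(d\,\yb_N)]|_{\la=0}$; on the target it is the commutator of vector fields, the Lie derivative of a form along a vector field, and the form bracket $[fd_{ij},gd_{kl}]=\varepsilon_{ijkl}fg\,\de_{x_{t_{ijkl}}}$. I would verify $\Phi([u,v])=[\Phi(u),\Phi(v)]$ on the three types of pairs of generators $(a_{ij}\yb^M,a_{rs}\yb^N)$, $(a_{ij}\yb^M,b_k\yb^N)$ and $(b_i\yb^M,b_j\yb^N)$, which correspond respectively to the vector--vector, vector--form and form--form brackets of $E(5,10)$. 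The vector--vector case reduces to Proposition \ref{isom}: the even generators $a_{ij}$ are built from the $\de_{x_i}$, whose $\la$-bracket agrees, up to the overall sign of Definition \ref{hatRE}, with that of the generators of $RW(5,0)$. The remaining two cases are direct computations from Proposition \ref{prodotti} together with the reduction $(\de_i c)\yb^M=-m_i\,c\,\yb^{M-e_i}$.

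Finally I would establish bijectivity by a graded dimension count. The images $\Phi(a_{ij}\yb^M)=-\big(\de_{x_i}(\xb^M)\de_{x_j}-\de_{x_j}(\xb^M)\de_{x_i}\big)$ are manifestly divergence-free and, by the formal Poincar\'e lemma, span all of $S_5=E(5,10)_{\bar 0}$; similarly the forms $\Phi(b_k\yb^M)$ are closed and span $\Omega^2_{cl}=E(5,10)_{\bar 1}$. Thus $\Phi$ is surjective in every degree, so $\dim\mathfrak g(RE(5,10))_d\geq\dim E(5,10)_d$ for all $d$. Combined with the opposite inequality of Corollary \ref{eqdim}, the two graded dimensions coincide, and a degree-preserving surjection between finite-dimensional spaces of equal dimension is a bijection; hence $\Phi$ is a graded isomorphism of Lie superalgebras.

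The main obstacle is the bracket-preservation step: although each of the three cases is mechanical, the sign bookkeeping arising from the parity of the odd generators $b_k$ and from the nonstandard sign convention of Definition \ref{hatRE}, together with the repeated use of the reduction $(\de_i c)\yb^M=-m_i\,c\,\yb^{M-e_i}$, makes it the most delicate part. A secondary point needing a genuine (rather than purely computational) argument is surjectivity, where identifying the spans with $S_5$ and $\Omega^2_{cl}$ rests on the Poincar\'e lemma.
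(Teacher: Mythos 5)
Your proposal is correct and follows essentially the same route as the paper: well-definedness via the relations of Proposition \ref{relations}, bracket-preservation checked on pairs of generators, surjectivity of $\Phi$, and injectivity deduced from the dimension bound of Corollary \ref{eqdim}. Your only additions are to spell out what the paper calls ``easy to check'' — identifying the images $-\big(\de_{x_i}(\xb^M)\de_{x_j}-\de_{x_j}(\xb^M)\de_{x_i}\big)$ and $-d(\xb^M dx_k)$ as exact forms and invoking the formal Poincar\'e lemma for surjectivity, and making explicit the graded dimension count behind the injectivity claim.
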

\begin{proof} Let $R=RE(5,10)$.
By construction and  Proposition \ref{relations}, ${\mathfrak g}(R)$ is spanned by the elements $a_{ij}\yb^M$ and $b_k\yb^M$
subject to the following relations:
\begin{itemize}
\item[(i)] $a_{ij}\yb^M+a_{ji}\yb^M=0$;
\item[(ii)] $a_{ij}{\de}_{y_h}\yb^M+a_{jh}{\de}_{y_i}\yb^M+a_{hi}{\de}_{y_j}\yb^M=0$;
\item[(iii)] $\sum_k b_k{\de}_{y_k}\yb^M=0$.
\end{itemize}
In particular $a_{ij}\yb^{0}=0$ and $b_k\yb^{0}=0$.
It is easy to check that relations (i), (ii), (iii) are preserved by the map $\Phi$ and that $\Phi$ is surjective. Injectivity of $\Phi$ follows from Corollary \ref{eqdim}.

The proof that $\Phi$ is an homomorphism is a straightforward verification based on the following observation
\begin{align*} 
[a_{ij}\yb^M, a_{rs}\yb^N]&=
a_{si}(\de_{y_j}\de_{y_r}\yb^M)\yb^{N}+a_{js}(\de_{y_i}\de_{y_r}\yb^M)\yb^{N}+a_{rj}(\de_{y_i}\de_{y_s}\yb^M)\yb^{N}\\
&\hspace{5mm}+
a_{ir}(\de_{y_j}\de_{y_s}\yb^M)\yb^{N}-a_{rs}{\de}_{y_j}((\de_{y_i}\yb^{M})\yb^N)-a_{sr}{\de}_{y_i}((\de_{y_j}\yb^{M})\yb^N).
\end{align*} 
\end{proof}
The following corollary answers a question 
raised in \cite{CC}.
\begin{corollary}
 Let $F$ be a finite-dimensional $\mathfrak{sl}_5$-module and let $M(F)$ be the corresponding
 $E(5,10)$-Verma module. Then $M(F)^*$ is isomorphic to $M(F^*)$.
\end{corollary}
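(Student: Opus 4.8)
The plan is to derive the corollary directly from the main Theorem \ref{main}, the only substantive point being that the shift character of $E(5,10)$ vanishes. First I would invoke Theorem \ref{E(5,10)}, which identifies the annihilation superalgebra $\mathfrak{g}(RE(5,10))$ with $E(5,10)$ as a $\Z$-graded Lie superalgebra; in particular $\mathfrak{g}(R)_0 \cong E(5,10)_0 \cong \mathfrak{sl}_5$. I would also record that $RE(5,10)$ satisfies Assumptions \ref{assumption} with respect to the principal gradation: that gradation has depth $2$, so its depth is at most $3$; the component $E(5,10)_{-1}=\Omega^2_{cl}$ is purely odd while $\mathfrak{g}(R)_{-3}=0$ is trivially purely odd; and $\mathfrak{g}(R)_{-2}$ is identified with $D=\langle \de_{y_1},\dots,\de_{y_5}\rangle$ by the very construction of $\mathfrak{g}(R)$.

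With these in hand, Theorem \ref{main} yields $M(F)^*\cong M(F^\vee)$, where $F^\vee$ is the $\chi$-shifted dual of $F$ and $\chi_x=\str(\ad(x)|_{\mathfrak{g}(R)_{<0}})$ for $x\in\mathfrak{g}(R)_0\cong\mathfrak{sl}_5$. The decisive step is then to check that $\chi=0$. I would argue that any character of a Lie algebra vanishes on commutators, since the target $\C$ is abelian and so $\chi_{[x,y]}=0$ for all $x,y$; because $\mathfrak{sl}_5$ is simple, $[\mathfrak{sl}_5,\mathfrak{sl}_5]=\mathfrak{sl}_5$, which forces $\chi$ to vanish identically on $\mathfrak{g}(R)_0$.

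Since the shift is trivial we get $F^\vee=F^*$, and Theorem \ref{main} then gives $M(F)^*\cong M(F^*)$, as desired. I do not expect a genuine obstacle here: this is precisely the vanishing-shift phenomenon already flagged in the introduction, namely that $\chi=0$ whenever $\mathfrak{g}_0=[\mathfrak{g}_0,\mathfrak{g}_0]$, which holds for $E(5,10)$ because $\mathfrak{g}(R)_0$ is the simple Lie algebra $\mathfrak{sl}_5$.
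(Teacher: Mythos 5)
Your proposal is correct and follows the paper's own argument: invoke Theorem \ref{E(5,10)} to identify $\mathfrak{g}(RE(5,10))$ with $E(5,10)$ and verify Assumptions \ref{assumption}, then apply Theorem \ref{main} together with the fact that $\mathfrak{sl}_5$, being perfect, admits only the zero character, so the shift vanishes and $F^\vee=F^*$. The only difference is that you spell out why $\mathfrak{sl}_5$ has no nonzero characters (characters kill commutators and $[\mathfrak{sl}_5,\mathfrak{sl}_5]=\mathfrak{sl}_5$), which the paper states without elaboration.
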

\begin{proof}
By Theorem \ref{E(5,10)} we have $\mathfrak{g}(RE(5,10))_{-2}=\langle \de_{y_1},\dots, \de_{y_5}\rangle$ and $RE(5,10)$ satisfies Assumptions \ref{assumption}. Since the $0$-th degree component of $E(5,10)$ is isomorphic
to $\mathfrak{sl}_5$ which has only zero characters, by Theorem \ref{main}  the conformal module $M(F)^*$ is isomorphic to $M(F^*)$. 
\end{proof}

\section{The Lie conformal superalgebra  $RE(3,6)$.}\label{section7}
In this section we introduce the infinite-dimensional (linearly compact) Lie superalgebra $E(3,6)$ and realize it
as the annihilation superalgebra of a Lie conformal superalgebra of type $(3,0)$. This is used to show that $M(F)^*=M(F^*)$ for every finite-dimensional representation $F$ of $E(3,6)_0$.

Recall the construction of $E(3,6)$ in \cite{CK2}. The even part of $E(3,6)$ is the semidirect sum of $W(3,0)$ and $\Omega^0(3)\otimes \sl_2$, where $\Omega^0(3)$ denotes the space of formal power series in three even indeterminates $x_1, x_2, x_3$ and  $W(3,0)$ acts on it in the standard way.
Besides, for $f,g\in \Omega^0(3)$ and $c_1, c_2\in \sl_2$ we have:
$$[f\otimes c_1, g\otimes c_2]=fg\otimes [c_1,c_2].$$
The odd part of $E(3,6)$  is $\Omega^1(3)^{-1/2}\otimes \C^2$ where  $\Omega^1(3)^{-1/2}$ is the space of 1-forms in $x_1, x_2, x_3$ and $-1/2$ refers to the action of $W(3,0)$ on $\Omega^1(3)$; namely, for $X\in W(3,0)$ and $\omega\otimes v\in \Omega^1(3)^{-1/2}\otimes \C^2$ we have:
$$[X, \omega\otimes v]={\mathcal L}_X(\omega)\otimes v-\frac{1}{2} \diver(X)\omega\otimes v$$
where ${\mathcal L}_X$ denotes the Lie derivative and $\diver$ the usual divergence. 
Besides, for $f\in \Omega^0(3)$ and $c\in \sl_2$, we have:
\[
 [f\otimes c, \omega\otimes v]=f\omega\otimes c.v
\]
where $c.v$ denotes the standard action of $c$ on $v$.
The bracket between two odd elements is defined as follows: we identify $\Omega^2(3)^{-1}$ with $W(3,0)$ via contraction of vector fields with the standard volume form, and $\Omega^3(3)^{-1}$ with $\Omega^0(3)$. Then, for $\omega_1, \omega_2\in \Omega^1(3)^{-\frac{1}{2}}$, $u_1, u_2\in\C^2$, we have:
\[
[\omega_1\otimes u_1, \omega_2\otimes u_2]=(\omega_1\wedge \omega_2)\otimes (u_1\wedge u_2)+
\frac{1}{2}(d\omega_1\wedge\omega_2+\omega_1\wedge d\omega_2)\otimes u_1\cdot u_2
\]
where $u_1\cdot u_2$ denotes an element in the symmetric square of $\C^2$, i.e., an element in $\mathfrak{sl}_2$, and $u_1\wedge u_2$ an element in the skew-symmetric square of $\C^2$, i.e., a complex number. Let us denote by $H,E,F$ the standard basis of $\sl_2$ and by $e_1, e_2$ the standard basis of $\C^2$. Then $E={e_1}^2/2$, $F=-{e_2}^2/2$, $H=-e_1\cdot e_2$ and $e_1\wedge e_2=1$.

For $i,j\in \{1,2,3\}$, $\varepsilon_{ij}$ and $t_{ij}$ are defined as follows: if $i\neq j $ we let $t_{ij}\in \{1,2,3\}$ be such that $|\{i,j,t_{ij}\}|=3$ and $\varepsilon_{ij}$ be the sign of the permutation 
$(i,j,t_{ij})$. If $i=j$ we let $\varepsilon_{ij}=0$.

We consider on $E(3,6)$ the principal gradation given by $\deg x_i=-\deg \de_{x_i}=2$, $\deg(e_1)=\deg(e_2)=0$, $\deg(E)=\deg(F)=\deg(H)=0$, $\deg(dx_i)=-1$ for all $i=1,2,3$. Observe that the $0$-th graded component is isomorphic to $\mathfrak{sl}_3\oplus \mathfrak{sl}_2\oplus \mathbb F$, $E(3,6)_{-1}=\langle dx_i\otimes e_j:\, i=1,2,3,\,j=1,2\rangle$ and $E(3,6)_{-2}=\langle \de_{x_1},\de_{x_2}, \de_{x_3}\rangle$.

The strategy to construct the Lie conformal superalgebra $R\mathfrak{g}$ for 
$\mathfrak{g}=E(3,6)$ 
and $E(3,8)$ (see Section \ref{section8}), such that the annihilation Lie superalgebra of $R\mathfrak{g}$ is $\mathfrak{g}$, is 
the same as for $\mathfrak{g}=E(5,10)$ in Section \ref{section6} (see also the Introduction). 
Namely, we construct a formal 
distribution Lie superalgebra $\tilde{\mathfrak{g}}$ by localizing the
formal power series in $x_1, x_2, x_3$ by these 
variables, and show that $\tilde{\mathfrak{g}}$ is spanned by the coefficients
of pairwise local formal distributions $a_i(\zb)$.
Next, we compute the brackets 
$[a_i(\zb),a_j(\wb)]$ as linear combinations of the delta function and its 
derivatives with coefficients the formal distributions $a_k(\wb)$ and their 
derivatives. Applying the formal Fourier transform, we obtain the Lie conformal 
superalgebra $R\mathfrak{g}$ of type $(3,0)$. The annihilation Lie 
superalgebra of $R\mathfrak{g}$ has a canonical surjective map  $\Phi$ to 
$\mathfrak{g}$, and it 
remains to show that the kernel of $\Phi$ is zero. In the case when all the 
coefficients of all the formal distributions $a_i(\zb)$ are linearly independent, like for $\mathfrak{g}=E(3,6)$,
this is immediate. But for $E(5,10)$ and $E(3,8)$ this does not hold,
and it requires some effort to prove that $\Phi$ has zero kernel.
(The case of Lie conformal superalgebras of type $(1,0)$ is easy since
any finitely generated module over $\C[\deb]$ is a direct sum of a free 
module and a torsion module.)

So let $\mathfrak g=E(3,6)$ and $\tilde{\mathfrak g}$ be defined as above. In order to construct the conformal superalgebra $RE(3,6)$ we consider the following formal distributions in the variable $z$ with coefficients in $\tilde{\mathfrak g}$:
$a_i(\zb)=\delta(\xb-\zb)\de_{x_i}$ , $c(\zb)=\delta(\xb-\zb) \otimes c$, and $b_{hk}(\zb)=\delta(\xb-\zb)  dx_h\otimes e_k$, for all $i,h=1,2,3$, $k=1,2$, $c\in \mathfrak{sl}_2$. The computation of the formal Fourier transform of the brackets between these formal distributions leads us to the following definition.
\begin{definition}\label{def:re36}
	We set $RE(3,6)=\C[\de_1,\de_2,\de_3](A\oplus B\oplus C)$, where $A=\langle a_1, a_2, a_3\rangle$
	is even of degree $-2$, 
	$B=\langle b_1, b_2, b_3\rangle\otimes\C^2$ is odd of degree $-1$ and $C=\sl_2$ is even of degree zero,
	 and the following $\la$-brackets: for $c,c'\in C$, $v\in\C^2$ and $b_{hk}=b_h\otimes e_k$,
	 \begin{align*}
	&[{a_i}_\la a_j]=(\de_i+\lambda_i)a_j+\lambda_ja_i\\
	&[{a_i}_\la b_{hk}]=(\de_i+\frac{3}{2}\lambda_i)b_{hk}-\delta_{ih}(\lambda_1b_{1k}+\lambda_2b_{2k}+\lambda_3b_{3k})\\
	& [{b_{ij}}_{\la}b_{hk}]=(j-k)\varepsilon_{ih}a_{t_{ih}}\\
	& [{a_i}_{\la}c]=(\de_i+\lambda_i)c\\
	& [c_{\la}b_h\otimes v]=b_h\otimes c.v\\
	&[c_{\la}c']=[c,c'].
	\end{align*}
	extended on the whole $RE(3,6)$ by properties \eqref{defsupconf1} and \eqref{defsupconf2} of Definition \ref{defsupconf}.
\end{definition}
\begin{proposition}
	$RE(3,6)$ is a Lie conformal superalgebra of type $(3,0)$.
\end{proposition}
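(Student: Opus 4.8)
The plan is to treat the $\la$-brackets of Definition \ref{def:re36} as the defining data on the generators $a_i$, $b_{hk}=b_h\otimes e_k$ and $c\in C=\sl_2$, and to verify the four axioms of Definition \ref{defsupconf} for the $\C[\deb]$-module $RE(3,6)=\C[\deb](A\oplus B\oplus C)$. Conformal sesquilinearity \eqref{defsupconf1}--\eqref{defsupconf2} holds by construction, since the bracket is extended from the generators by precisely these rules. As in Proposition \ref{4.2} and in the verification that $RE(5,10)$ is a Lie conformal superalgebra (following Proposition \ref{prodotti}), conformal skew-symmetry \eqref{defsupconf3} and the conformal Jacobi identity \eqref{defsupconf4} are compatible with the sesquilinear extension, so it suffices to check them on the finite generating set $\{a_i,b_{hk},c\}$. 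Equivalently, one could invoke that $a_i(\zb),b_{hk}(\zb),c(\zb)$ form a pairwise local family of formal distributions inside the Lie superalgebra obtained by localizing $E(3,6)$, whose Fourier-transformed $\la$-brackets are those of Definition \ref{def:re36}; the general formalism of local families then delivers the conformal axioms. I shall follow the direct route, mirroring the $RE(5,10)$ computation.

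First I would verify conformal skew-symmetry pair by pair. The pair $(a_i,a_j)$ is identical to Proposition \ref{4.2}. For $(b_{ij},b_{hk})$ one uses $\varepsilon_{hi}=-\varepsilon_{ih}$ and $t_{hi}=t_{ih}$, the oddness of the $b$'s (which flips the sign in \eqref{defsupconf3}), and the invariance of the $\la$-independent term under $\la\mapsto-\la-\deb$. The pairs $(a_i,c)$, $(c,b_h\otimes v)$ and $(c,c')$ follow directly from the displayed formulas. I would then organize the conformal Jacobi identity by the number of $B$-factors in the triple, checking a single representative ordering of each unordered triple, which is legitimate once skew-symmetry is available.

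With no $B$-factor, the subspace $A\oplus C$ is a semidirect sum of $RW(3,0)$ with a current copy of $\sl_2$: the triple $(a_i,a_j,a_k)$ reduces to Proposition \ref{4.2}, $(c,c',c'')$ is the Jacobi identity of $\sl_2$, and $(a_i,a_j,c)$, $(a_i,c,c')$ express that $C$ is a module over $RW(3,0)$ acting by derivations; these are routine. With exactly one $B$-factor, $(a_i,a_j,b)$ asserts that $B$ is a conformal $A$-module, $(c,c',b)$ reduces to $\C^2$ being an $\sl_2$-module, so that $c.(c'.v)-c'.(c.v)=[c,c'].v$, and $(a_i,c,b)$ checks that the $A$- and $C$-actions on $B$ are compatible; each is a short computation directly from Definition \ref{def:re36}.

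The substantive cases, which I expect to be the main obstacle, are those with two or three $B$-factors. The odd-odd bracket reflects the decomposition $\C^2\otimes\C^2=\Lambda^2\C^2\oplus S^2\C^2$: the antisymmetric part produces the vector-field term $(j-k)\varepsilon_{ih}a_{t_{ih}}\in A$ via the identification $\Lambda^2\C^3\cong\C^3$, while the symmetric part produces an $\sl_2$-valued contribution in $C$. In the triples $(b,b,b)$, $(b,b,c)$ and $(a_i,b,b)$ one must re-expand $[{a_m}_\la b]$ or $[c_\la b]$ back into $B$ and then contract two $b$'s again, and the conformal Jacobi identity closes only because these two pieces of the odd-odd bracket conspire with the super-signs coming from the odd parity of the $b$'s. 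This cancellation is precisely the conformal shadow of the super-Jacobi identity of $E(3,6)$ among odd elements; in particular the $C$-valued (symmetric) contributions of $[{b_{ij}}_\la b_{hk}]$ are indispensable for the $(b,b,c)$ triple, where a computation retaining only the $A$-valued part would leave an unbalanced term in $C$ through $[c_\la a_m]$. Each of these is a finite, sign-intensive calculation of the same type as cases (1)--(4) of the corresponding theorem for $RE(5,10)$, and once the purely odd triple $(b,b,b)$ is settled the remaining mixed triples follow by the same pattern.
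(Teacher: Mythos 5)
Your overall strategy coincides with the paper's proof, which consists precisely of the two routes you name --- a direct check of skew-symmetry and the conformal Jacobi identity on the generators, treating Definition \ref{def:re36} as an abstract definition, or an appeal to the fact that $RE(3,6)$ was constructed from a pairwise local family of formal distributions with coefficients in the Lie superalgebra $\tilde{\mathfrak g}$ --- stated without any computational detail. Your reduction to generators, the semidirect description of $A\oplus C$ as $RW(3,0)$ acting on a current copy of $\sl_2$, and the organization of the Jacobi identity by the number of $B$-factors are all sound.

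There is, however, one point where your argument and the printed text part ways, and it is the crux. You attribute to $[{b_{ij}}_{\la}b_{hk}]$ a symmetric $\sl_2$-valued component in $C$ in addition to the $A$-valued term $(j-k)\varepsilon_{ih}a_{t_{ih}}$, and you claim this component is indispensable in the $(b,b,c)$ and $(b,b,b)$ triples. Definition \ref{def:re36} as printed contains no such component: the odd-odd bracket there is purely $A$-valued. You are mathematically right, and the discrepancy is not cosmetic: with the bracket as printed, the conformal Jacobi identity genuinely fails, exactly where you predicted. For instance, using $[{b_{22}}_{\mub}H]=b_{22}$ and $[{b_{11}}_{\la}H]=-b_{11}$, the triple $(b_{11},b_{22},H)$ gives
\[
[{b_{11}}_{\la}[{b_{22}}_{\mub}H]]=[{b_{11}}_{\la}b_{22}]=-a_3,
\qquad
[[{b_{11}}_{\la}b_{22}]_{\la+\mub}H]-[{b_{22}}_{\mub}[{b_{11}}_{\la}H]]=-(\de_3+\lambda_3+\mu_3)H-a_3,
\]
so the $C$-valued term $(\de_3+\lambda_3+\mu_3)H$ is unbalanced; a similar failure occurs for purely odd triples such as $(b_{11},b_{22},b_{31})$. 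The term missing from the printed definition is the one actually produced by the Fourier transform of $[b_{ij}(\zb),b_{hk}(\wb)]$, coming from the component $\tfrac12(d\omega_1\wedge\omega_2+\omega_1\wedge d\omega_2)\otimes u_1\cdot u_2$ of the $E(3,6)$ bracket; it equals $\tfrac{\varepsilon_{ih}}{2}\,(2\lambda_{t_{ih}}+\de_{t_{ih}})\,e_j\cdot e_k$, the exact analogue of the term $\tfrac12(2\lambda_{t_{jk}}+\de_{t_{jk}})\,e_i\cdot e_h$ that the paper \emph{does} include in its definition of $RE(3,8)$, and with it restored all your checks (and the isomorphism with $E(3,6)$) go through. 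In short: what you prove is the proposition for the corrected bracket, i.e.\ for the conformal superalgebra genuinely spanned by the distributions $a_i(\zb)$, $b_{hk}(\zb)$, $c(\zb)$; for Definition \ref{def:re36} read literally the proposition is false, and both your direct verification and the paper's own proof break at the triples you singled out. The flaw lies in the paper's definition rather than in your reasoning, but for your write-up to be a complete proof you must state the corrected $\la$-bracket explicitly instead of attributing it to the printed definition.
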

\begin{proof}
The only nontrivial verification is the conformal Jacobi identity which can be verified directly if one considers Definition \ref{def:re36} as an abstract definition, or as a consequence of the fact that we constructed $RE(3,6)$ as a space of formal distributions with coefficients in the Lie superalgebra $\tilde {\mathfrak g}$. 
\end{proof}
\begin{theorem}
 The annihilation algebra ${\mathfrak g}(RE(3,6))$ is isomorphic, as $\Z$-graded Lie superalgebra, to $E(3,6)$ with principal gradation via the following map:
 $$
 \begin{array}{lcl}
 a_i\yb^M &\mapsto& -\xb^M\de_{x_i};\\
 b_{hk}\yb^M& \mapsto &\xb^M dx_h\otimes e_k;\\
 c\yb^M &\mapsto &\xb^M \otimes c.
 \end{array}$$
\end{theorem}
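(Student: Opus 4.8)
The plan is to follow the same strategy as in Theorem \ref{E(5,10)}, while exploiting the crucial simplification that, by Definition \ref{def:re36}, $RE(3,6)$ is a \emph{free} $\C[\deb]$-module on the generators $a_i$, $b_{hk}$ and $c$ (the latter ranging over a basis of $\sl_2$), with no relations of the type appearing in Proposition \ref{relations} for $RE(5,10)$. First I would record the consequence of freeness for the annihilation algebra: using $\tilde\deb=\deb+\deb_\yb$ to move every $\de_i$ off a generator and onto $\deb_\yb$, so that $\de_i(g)\yb^M\equiv -m_i\,g\,\yb^{M-e_i}\bmod\tilde\deb\tilde R$, one sees that $\mathfrak g(RE(3,6))=\mathcal A(RE(3,6))$ has a $\C$-basis
\[
\{\,a_i\yb^M,\ b_{hk}\yb^M,\ c\yb^M\ :\ M\in\Z_{\geq 0}^3\,\},
\]
with $c$ running over $\{E,F,H\}$.

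Next I would check that $\Phi$ is a linear isomorphism compatible with the $\Z$-gradation. The images $\xb^M\de_{x_i}$, $\xb^M dx_h\otimes e_k$ and $\xb^M\otimes c$ are precisely bases of $W(3,0)$, of $\Omega^1(3)^{-1/2}\otimes\C^2$ and of $\Omega^0(3)\otimes\sl_2$, whose direct sum is $E(3,6)$; hence $\Phi$ carries the basis above bijectively onto a basis of $E(3,6)$. A direct comparison of degrees confirms the grading is preserved: $\deg(a_i\yb^M)=2|M|-2=\deg(\xb^M\de_{x_i})$, $\deg(b_{hk}\yb^M)=2|M|-1=\deg(\xb^M dx_h\otimes e_k)$, and $\deg(c\yb^M)=2|M|=\deg(\xb^M\otimes c)$. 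This is exactly the step that is harder for $E(5,10)$ and $E(3,8)$: there injectivity required the dimension count of Corollary \ref{eqdim}, whereas here the freeness of $RE(3,6)$ makes injectivity of $\Phi$ immediate.

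It then remains to verify that $\Phi$ is a homomorphism of Lie superalgebras. The conceptual route, which I would follow, is to invoke the formal distribution construction underlying Definition \ref{def:re36}: the $\la$-brackets of $RE(3,6)$ are, by construction, the formal Fourier transforms of the brackets computed in $\tilde{\mathfrak g}$ between the pairwise local formal distributions $a_i(\zb)=\delta(\xb-\zb)\de_{x_i}$, $b_{hk}(\zb)=\delta(\xb-\zb)\,dx_h\otimes e_k$ and $c(\zb)=\delta(\xb-\zb)\otimes c$. The standard correspondence between a family of pairwise local formal distributions and its annihilation algebra (cf.\ \cite{K1}) then identifies $\mathcal A(RE(3,6))$ with the Lie subalgebra of $\tilde{\mathfrak g}$ spanned by the non-negative-mode coefficients of these distributions, namely $\{\xb^M\de_{x_i},\,\xb^M dx_h\otimes e_k,\,\xb^M\otimes c\}$, and this identification is precisely $\Phi$ (up to the sign on the $a_i$). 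Since, unlike for $E(5,10)$ and $E(3,8)$, all these coefficients are linearly independent in $\tilde{\mathfrak g}$, the canonical surjection $\Phi$ has zero kernel, completing the argument.

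The main obstacle, such as it is, is organizing the bracket bookkeeping: one must either carry out the formal-distribution identification carefully, tracking the sign $a_i\yb^M\mapsto-\xb^M\de_{x_i}$ that comes from the conventions of Definition \ref{hatRE} and Definition \ref{annihilationalgebra}, or else verify the homomorphism property directly on generators by comparing the annihilation bracket $[\yb_M a,b\yb_N]=\sum_J\frac{1}{f(J)}(\yb_M(\deb_\yb)_J)(a_J b)\yb_N$ with the explicit $E(3,6)$-brackets recalled at the beginning of the section. Both are routine once the basis identification is in place, and the genuinely hard point present for $E(5,10)$ and $E(3,8)$ — proving injectivity of $\Phi$ — is here dispatched at once by the freeness of $RE(3,6)$.
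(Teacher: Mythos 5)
Your proposal is correct, and it is worth noting that it follows the strategy the paper itself announces in the expository paragraph of Section \ref{section7} (``The annihilation Lie superalgebra of $R\mathfrak{g}$ has a canonical surjective map $\Phi$ to $\mathfrak{g}$\dots In the case when all the coefficients of all the formal distributions $a_i(\zb)$ are linearly independent, like for $\mathfrak{g}=E(3,6)$, this is immediate''), rather than the proof the paper actually writes for this theorem. The paper's proof consists of two steps: it asserts that the map is ``clearly a linear isomorphism'' (this is your freeness argument, left implicit there), and it then verifies bracket preservation by \emph{direct computation}, writing out the hardest case, $[a_i\yb^M,b_{hk}\yb^N]$ versus $[\xb^M\de_{x_i},\xb^N dx_h\otimes e_k]$ --- the one where the $-\tfrac{1}{2}\diver$ correction enters --- and leaving the remaining cases to the reader. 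You instead derive the homomorphism property from the canonical coefficient-extraction map attached to the formal-distribution realization, and injectivity from linear independence of the coefficients in $\tilde{\mathfrak g}$. What your route buys: no case-by-case bracket checking, and a transparent conceptual reason why injectivity is trivial here but requires the dimension count of Corollary \ref{eqdim} for $E(5,10)$. What it hides: the identification of the abstract $\la$-brackets of Definition \ref{def:re36} with the formal Fourier transforms of the brackets of $a_i(\zb)$, $b_{hk}(\zb)$, $c(\zb)$ (including the sign forcing $a_i\leftrightarrow -a_i(\zb)$) is asserted but never computed in the paper for $E(3,6)$ --- unlike for $E(5,10)$, where Lemmas \ref{deiladej}, \ref{dklladrs} and \ref{deiladjk} carry it out --- so your appeal to the correspondence silently relies on essentially the same generator-by-generator computation that the paper's proof performs directly; the work is relocated, not eliminated. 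Since you acknowledge exactly this trade-off in your final paragraph and offer the direct verification as the fallback, your proposal is a complete argument at the same level of rigor as the paper's own proof.
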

\begin{proof}
 The map is clearly a linear isomorphism. We need to check that the Lie bracket is preserved. We have:
 \begin{align*}
 [a_i\yb^M, b_{hk}\yb^N]&=\de_ib_{hk}\yb^{M+N}+\frac{3}{2}m_ib_{hk}\yb^{M+N-e_i}-\delta_{ih}\sum_{j=1}^3m_jb_{jk}\yb^{M+N-e_j}\\
 &=(-n_i+\frac{1}{2}m_i)b_{hk}\yb^{M+N-e_i}-\delta_{ih}\sum_{j=1}^3m_jb_{jk}\yb^{M+N-e_j}.
\end{align*}
\begin{align*}
 [\xb^M\de_{x_i}, \xb^Ndx_h\otimes e_k]&=n_i\xb^{M+N-e_i}dx_h\otimes e_k+\xb^N(\delta_{ih}d(\xb^M)\otimes e_k
 -\frac{1}{2}m_i\xb^{M-e_i}dx_h\otimes e_k)\\
 &=(n_i-\frac{1}{2}m_i)\xb^{M+N-e_i}dx_{h}\otimes e_k+\delta_{ih}\sum_{j=1}^3m_j\xb^{M+N-e_j}dx_{j}\otimes e_k
\end{align*}

 The verification in the other cases is simpler and left to the reader.
\end{proof}

\begin{remark}Let $R=RE(3,6)$. Observe that $R$ satisfies Assumption \ref{assumption} and so Theorem \ref{main} applies.
Note that  $\mathfrak g(R)_0\cong \mathfrak{sl}_3\oplus \mathfrak{sl}_2\oplus \C z$
where $z$ is its non-zero central element such that $\ad z$ acts as $j$ on $\mathfrak g(R)_j$. Since $\dim\mathfrak g(R)_{-1}=6$, $\dim\mathfrak g(R)_{-2}=3$ and the depth of the gradation is $2$, we see that $\str \big(\ad z_{|\mathfrak g(R)_{<0}}\big)=0$ and this is consistent with the classification of degenerate Verma modules of $E(3,6)$ given in \cite{KR1}, \cite{KR2} and \cite{KR3}. 
\end{remark}
\section{The Lie conformal superalgebra $RE(3,8)$}\label{section8}
The Lie superalgebra
 $E(3,8)$ has the following structure \cite{CK2, CCK, CaK}(\cite{CCK} corrected \cite{CK2}, and \cite{CaK} further corrected \cite{CCK}): it has even part $E(3,8)_{\bar{0}}=W(3,0)+\Omega^0(3)\otimes \mathfrak{sl}_2+d\Omega^1(3)$
and odd part $E(3,8)_{\bar{1}}=\Omega^0(3)^{-1/2}\otimes \C^2+\Omega^2(3)^{-1/2}\otimes\C^2$. $W(3,0)$ acts on $\Omega^0(3)\otimes \mathfrak{sl}_2$ in the natural way and on $d\Omega^1(3)$ via Lie derivative.
For $X,Y\in W(3,0)$, $f,g\in\Omega^0(3)$, $A,B\in \mathfrak{sl}_2$, $\omega_1, \omega_2\in d\Omega^1(3)$, we have:

\begin{align*}
&[X,Y]=XY-YX-\frac{1}{2}d(\diver(X))\wedge d(\diver(Y)),\\
&[f\otimes A,\omega_1]=0,\\
&[f\otimes A, g\otimes B]=fg\otimes [A,B]+\tr(AB) df\wedge dg,\\
&[\omega_1,\omega_2]=0.
\end{align*}
Besides, for $X\in W_3$, $f\in\Omega^0(3)^{-1/2}$, $g\in\Omega^0(3)$, $v\in\C^2$, $A\in \mathfrak{sl_2}$, $\omega\in d\Omega^1(3)$,
$\sigma\in\Omega^2(3)^{-1/2}$,
\begin{align*}
&[X,f\otimes v]=(X.f+\frac{1}{2}d(\diver(X))\wedge df)\otimes v,\\
&[g\otimes A,f\otimes v]=(gf+dg\wedge df)\otimes Av,\\
&[g\otimes A, \sigma\otimes v]=g\sigma\otimes Av,\\
&[\omega,f\otimes v]=f\omega\otimes v,\\
&[\omega, \sigma\otimes v]=0,
\end{align*}
where $W(3,0)$ acts on $\Omega^2(3)$ via Lie derivative.
Finally, we identify $W_3$ with $\Omega^2(3)^{-1}$ and $\Omega^0(3)$ with $\Omega^3(3)^{-1}$. Besides, we identify $\Omega^2(3)^{-\frac{1}{2}}$ with $W_3^{\frac{1}{2}}$ and we denote by $X_{\omega}$ the vector field corresponding to the $2$-form
$\omega$ under this identification. Then, for $\omega_1, \omega_2\in \Omega^2(3)^{-1/2}$, $u_1, u_2\in\C^2$, we have:
\begin{align*}
&[\omega_1\otimes u_1, \omega_2\otimes u_2]=(X_{\omega_1}(\omega_2)-(\diver(X_{\omega_2}))\omega_1)u_1\wedge u_2,\\
&[f_1\otimes u_1, f_2\otimes u_2]=df_1\wedge df_2\otimes u_1\wedge u_2,\\
&[f_1\otimes u_1, \omega_1\otimes u_2]=(f_1\omega_1+df_1\wedge d(\diver(X_{\omega_1})))\otimes u_1\wedge u_2-
\frac{1}{2}(f_1d\omega_1-\omega_1df_1)\otimes u_1\cdot u_2,
\end{align*}
where, as in the description of $E(3,6)$, $u_1\cdot u_2$ denotes an element in the symmetric square of $\C^2$,
i.e., an element in $\mathfrak{sl}_2$, and $u_1\wedge u_2$ an element in the skew-symmetric square of $\C^2$, i.e., a complex number.
Let $\{e_1, e_2\}$ be the standard basis of $\C^2$ and $E,F,H$ the standard basis of $\mathfrak{sl}_2$. We shall often simplify notation
by writing elements of $E(3,8)$ omitting the $\otimes$ sign.

The so-called principal gradation of $E(3,8)$ is defined by:
$$\deg x_i=-\deg \de_{x_i}=2,\,\,\deg E=\deg F=\deg H=0, \,\, \deg dx_i=2, \,\, \deg e_i=-3.$$
It is a gradation with $0$-th graded component spanned by
the elements $x_i\de_{x_j}$, $E$, $F$ and $H$, and isomorphic to $\mathfrak{sl}_3\oplus \mathfrak{sl}_2\oplus\C$. Besides,
$E(3,8)_{-1}=\langle x_ie_1, x_ie_2~|~ i=1,2,3\rangle$,
$E(3,8)_{-2}=\langle \de_{ x_i}~|~ i=1,2,3\rangle$ and $E(3,8)_{-3}=\langle e_1, e_2\rangle$.

In order to define the Lie conformal superalgebra $RE(3,8)$ we let $\mathfrak g=E(3,8)$ and we proceed as in Section \ref{section7}. We consider the following formal distributions in the variables $z_1,z_2,z_3$ with coefficients in $\tilde {\mathfrak g}$: $a_i(\zb)=\delta(\xb-\zb) \de_{x_i}$, $b_i(\zb)=\sum_{j\neq i}\de_{z_j}\delta(\xb-\zb) d_{ij}$, $c(\zb)=\delta(\xb-\zb)\otimes c$, $e_l(\zb)=\delta(\xb-\zb)\otimes e_l$ and $d_{ijl}(\zb)=\delta(\xb-\zb) d_{ij}\otimes e_l$ for all  $i,j=1,2,3$, $c\in \mathfrak{sl}_2$ and $l=1,2$. The formal  Fourier transform of the brackets between these formal distributions lead us to the following definition.
\begin{definition}
	We consider the free $\C[\de_1,\de_2,\de_3]$-module generated by the following elements:
	\begin{itemize}
\item[-] $a_1, a_2, a_3$  even of degree $-2$, 
\item[-] $b_1, b_2, b_3$ even of degree 2, 
\item[-] $E, F, H$ even of degree  0, which are identified with the standard generators of  $\mathfrak{sl}_2$,
\item[-] $e_1, e_2$ odd of degree $-3$, which are identified with the canonical basis of $\C^2$,
\item[-] $d_{hkl}$  with  $h,k=1,2,3$, $l=1,2$, odd of degree 1,
\end{itemize}
and define $RE(3,8)$ as its quotient by the following relations:
\begin{itemize}
\item $\de_1 b_1+\de_2 b_2+\de_3b_3=0$,
\item $d_{hkl}=-d_{khl}$,
\end{itemize}
with
the following $\la$-brackets:
\begin{align*}
&[{a_i}_{\la}a_j]=-(\lambda_i+\de_i)a_j-\lambda_ja_i+\frac{1}{2}\lambda_i(\lambda_j+\de_j)
\sum_k\lambda_kb_k
\\
&[{a_i}_{\la}c]=-(\de_i+\lambda_i)c\\
&[{a_i}_{\la}b_k]=-(\de_i+\lambda_i)b_k+\delta_{ki}\sum_r\lambda_rb_r\\
&[c_\la b_j]=0\\
&[{b_i}_\la b_k]=0\\
&[{e_i}_\la e_j]=(j-i)\sum_{h,k}\varepsilon_{hk} \lambda_k\de_h a_{t_{hk}}\\
&[{e_i}_\la d_{jkh}]=\varepsilon_{jk}((h-i)a_{t_{jk}}+(i-h)(\lambda_{t_{jk}}+\de_{t_{jk}})
\sum_{r}\lambda_rb_r+\frac{1}{2}(2\lambda_{t_{jk}}+\de_{t_{jk}})e_i\cdot e_h)\\
&[{d_{jkh}}_\la d_{jsl}]=\varepsilon_{jk}(l-h)(1-\delta_{ks})b_j\\
&[{a_i}_\la e_j]=-(\frac{3}{2}\lambda_i+\de_i)e_j-\frac{1}{2}\sum_{k,r}\lambda_k\lambda_i
\de_rd_{krj}\\
&[c_\la e_j]=ce_j-\sum_{i,h}\lambda_i\de_hd_{ih}\otimes ce_j\\
&[c_\la d_{ijh}]=[c, d_{ijh}]\\
&[{b_j}_\la e_k]=-\sum_i\lambda_id_{ijk}\\
&[{b_i}_\la d_{jkl}]=0\\
&[c_\la c']=[c,c']+\sum_k \tr(cc')\lambda_k b_k .\\
&[{a_i}_\la d_{hkl}]=-\big(\frac{3}{2}\lambda_i+\de_i\big)d_{hkl}+ \delta_{ih} \sum_j \lambda_j d_{jkl}-\delta_{ik} \sum_r \lambda_r d_{rhl},
\end{align*}
for every $c,c'\in\mathfrak{sl}_2$.
\end{definition}
\begin{proposition}
	$RE(3,8)$ is a Lie conformal superalgebra of type $(3,0)$ which satisfies 
	Assumptions \ref{assumption}.
\end{proposition}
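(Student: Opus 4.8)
The plan is to verify the two assertions in turn: that the listed $\la$-brackets make $RE(3,8)$ a Lie conformal superalgebra of type $(3,0)$ in the sense of Definition \ref{defsupconf}, and then that the induced annihilation algebra satisfies the four conditions of Assumptions \ref{assumption}.

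For the conformal structure, the $\la$-bracket is prescribed on the generators $a_i,b_i,E,F,H,e_l,d_{hkl}$ and extended by conformal sesquilinearity, so \eqref{defsupconf1}--\eqref{defsupconf2} hold by construction; what remains is well-definedness on the quotient, conformal skew-symmetry, and the conformal Jacobi identity. Well-definedness amounts to checking that the two-sided ideal generated by the relations $\de_1 b_1+\de_2 b_2+\de_3 b_3=0$ and $d_{hkl}+d_{khl}=0$ is preserved by the bracket; the representative computation is $\sum_i[{a_j}_\la \de_i b_i]=-(\de_j+\lambda_j)\sum_i\lambda_i b_i+(\sum_r\lambda_r b_r)(\de_j+\lambda_j)$, which vanishes after using $\sum_i\de_i b_i=0$ and the commutativity of $\de_j$ with the even $b_i$, and the remaining cases are analogous. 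Conformal skew-symmetry \eqref{defsupconf3} is then checked directly on pairs of generators, exactly as in Proposition \ref{4.2}.

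The conformal Jacobi identity \eqref{defsupconf4} is the main obstacle. As for $RE(3,6)$, the brackets are by design the formal Fourier transforms of the brackets of the pairwise local distributions $a_i(\zb),b_i(\zb),c(\zb),e_l(\zb),d_{ijl}(\zb)$ with coefficients in $\tilde{\mathfrak g}=\widetilde{E(3,8)}$. When the Jacobi identity holds on the span of these coefficients, \eqref{defsupconf4} is automatic from the formal-distribution calculus of \cite{K1}; if the localization fails to be a Lie superalgebra (as happens for $E(5,10)$), one instead verifies \eqref{defsupconf4} directly on the finitely many triples $(a,b,c)$ of generators, precisely as was done for $RE(5,10)$. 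The sign and $\varepsilon_{hk}$ bookkeeping, together with the cocycle contributions coming from $d\Omega^1(3)$, make this case analysis the bulk of the work.

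It remains to check Assumptions \ref{assumption}. For (1), the degrees $\deg a_i=-2$, $\deg b_i=2$, $\deg E=\deg F=\deg H=0$, $\deg e_l=-3$, $\deg d_{hkl}=1$, together with $\deg\lambda_i=\deg\de_i=-2$, make each generator bracket homogeneous by inspection, so $RE(3,8)$ is $\Z$-graded. Since $\deg(b\yb_M)=\deg b+2\ell(M)$ with $\ell(M)\geq 0$ and the minimal generator degree is $-3$, no component of $\mathcal A(R)$ has degree below $-3$, giving depth $3$; this is (2). A degree count gives $\mathcal A(R)_{-3}=\langle e_1,e_2\rangle$ and $\mathcal A(R)_{-1}=\langle e_l y_i\rangle$, both purely odd, which is (3), and $\mathcal A(R)_{-2}=\langle a_1,a_2,a_3\rangle$. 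Finally, evaluating each $[{a_i}_\la b]$ at $\la=0$ gives $-\de_i b$ for every generator $b$, so in $\mathcal A(R)=\tilde R/\tilde{\deb}\tilde R$, where $\de_i=-\de_{y_i}$, one obtains $[a_i,b\yb_N]=(-\de_i b)\yb_N=\de_{y_i}(b\yb_N)$; hence $\ad(a_i)=\de_{y_i}$. Since $\de_{y_1},\de_{y_2},\de_{y_3}$ are linearly independent derivations, the $a_i$ are independent in $\mathcal A(R)$ and $\ad$ is injective on $\mathcal A(R)_{-2}$ with image exactly $D$, giving (4) and completing the verification.
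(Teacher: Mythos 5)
Your proposal follows essentially the same route as the paper's own (very terse) proof: well-definedness of the bracket on the quotient by the two relations, skew-symmetry and Jacobi checked on generators with the conformal Jacobi identity delegated to the formal-distribution realization with coefficients in $\tilde{\mathfrak g}$, and then degree bookkeeping for the Assumptions, which the paper dismisses as ``immediate''. Your representative well-definedness computation and your identification of the graded pieces $\mathcal A(R)_{-3}=\langle e_1,e_2\rangle$, $\mathcal A(R)_{-2}=\langle a_1,a_2,a_3\rangle$, $\mathcal A(R)_{-1}=\langle e_l y_i\rangle$, as well as the computation $\ad(a_i)=\de_{y_i}$ from $[{a_i}_\la b]_{|\la=0}=-\de_i b$, are all correct.

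There is, however, one step you assert rather than prove, and it is precisely the delicate point for $E(3,8)$: that $\de_{y_1},\de_{y_2},\de_{y_3}$ are linearly independent as derivations of $\mathcal A(R)$. This cannot follow from the bracket table or degree counts alone, because a priori the quotient $\mathcal A(R)=\tilde R/\tilde{\deb}\tilde R$ could collapse (if, say, $e_1=e_2=0$ there, the $\de_{y_i}$ could act as dependent, or even zero, derivations, and injectivity of $\ad$ on $\mathcal A(R)_{-2}$ would fail in the intended sense). The paper itself stresses that for $E(3,8)$, unlike $E(3,6)$, the coefficients of the defining formal distributions are \emph{not} all linearly independent, so non-degeneracy of $\mathcal A(R)$ is exactly what requires care. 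The gap is cheap to fill with a tool you already invoke: the formal-distribution construction gives a canonical surjective Lie superalgebra homomorphism $\Phi:\mathcal A(R)\rightarrow E(3,8)$ with $\Phi(a_i)=\de_{x_i}$, $\Phi(e_l)=e_l$. If $\sum_i c_i\de_{y_i}=0$ on $\mathcal A(R)$, applying it to the class of $e_1y_j$ gives $c_j e_1=0$ in $\mathcal A(R)$, and $e_1\neq 0$ because $\Phi(e_1)\neq 0$; hence all $c_j=0$. (Equivalently, $\ad\big(\sum_i c_ia_i\big)=0$ forces $\sum_i c_i\de_{x_i}$ to be central in $E(3,8)$, which is centerless.) Note that only the existence and surjectivity of $\Phi$ is needed here, not its injectivity, which is the genuinely hard statement proved in the theorem following this proposition. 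With this supplement your verification of Assumption (4), and hence the whole proof, is complete.
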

\begin{proof}
It is enough to check that the $\la$-brackets are consistent with the relations and, as in the case of $E(3,6)$, that the conformal Jacobi identity holds. This follows from a long and tedious but elementary computation and also from the fact that we constructed $RE(3,8)$ as a space of formal distributions with coefficients in the Lie superalgebra $\tilde {\mathfrak g}$. Assumptions \ref{assumption} are immediate.
\end{proof}
\begin{theorem}
 The annihilation algebra ${\mathfrak g}(RE(3,8))$ is isomorphic, as $\Z$-graded Lie superalgebra,  to $E(3,8)$ with principal gradation via the following map:
$$\begin{array}{lcl}
a_i\yb^M & \mapsto & \xb^M \de_{x_i}\\
b_k \yb^M & \mapsto & -\sum_{r\neq k} (\de_{x_{r}}\xb ^{M}) d_{rk}\\
c\yb^M & \mapsto & \xb^M \otimes c\\
 d_{jkl}\yb ^M & \mapsto &  \xb^M d_{jk}\otimes e_l\\
 e_l\yb ^M & \mapsto & \xb^M \otimes e_l\end{array}$$
for all $c\in\mathfrak{sl}_2$, $i,j,k=1,2,3$, $l=1,2$ .
 \end{theorem}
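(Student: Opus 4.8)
The plan is to follow the same template as the proof of Theorem~\ref{E(5,10)} for $E(5,10)$: exhibit $\Phi$ as a degree\nobreakdash-preserving surjective homomorphism of $\Z$-graded Lie superalgebras and then force bijectivity by a graded dimension count. First I would check that $\Phi$ is well defined, i.e.\ that it respects the two defining relations of $RE(3,8)$, which descend to $\mathfrak g(RE(3,8))$ as $d_{hkl}\yb^M=-d_{khl}\yb^M$ and $\sum_k b_k\de_{y_k}\yb^M=0$ (recall that $\de_i=-\de_{y_i}$ on the annihilation algebra). The antisymmetry relation is obviously preserved since $\Phi(d_{jkl}\yb^M)=\xb^Md_{jk}\otimes e_l$ and $d_{jk}=-d_{kj}$. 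For the divergence relation one computes
\[
\Phi(b_k\yb^M)=-\sum_{r\neq k}(\de_{x_r}\xb^M)d_{rk}=-d(\xb^M)\wedge dx_k=-d(\xb^M\,dx_k),
\]
an exact two-form, and then $\sum_k\Phi(b_k\de_{y_k}\yb^M)=-d\big(d(\xb^M)\big)=0$ by $d\circ d=0$; this computation also shows that the $b_k$'s map onto $d\Omega^1(3)$, which is precisely why the relation $\sum_k\de_k b_k=0$ is imposed in the first place.

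Second, I would argue that $\Phi$ is a homomorphism. As for $E(3,6)$ in Section~\ref{section7} and $E(5,10)$, this is not an independent miracle: $RE(3,8)$ was built so that its $\la$-brackets are the formal Fourier transforms of the brackets of the pairwise local distributions $a_i(\zb),b_i(\zb),c(\zb),e_l(\zb),d_{ijl}(\zb)$ with coefficients in $\tilde{\mathfrak g}=\widetilde{E(3,8)}$. Since the annihilation-algebra bracket corresponds exactly to the bracket of the non-negative Fourier coefficients of these distributions, and those coefficients span the regular part $E(3,8)\subset\tilde{\mathfrak g}$ (which, unlike all of $\tilde{\mathfrak g}$, is a genuine Lie superalgebra), the assignment of $g\yb^M$ to the corresponding coefficient is a Lie superalgebra homomorphism; alternatively one verifies the brackets among generators directly. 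Surjectivity is then immediate, since the images $\xb^M\de_{x_i}$, $\xb^M\otimes c$, $-d(\xb^M)\wedge dx_k$, $\xb^Md_{jk}\otimes e_l$ and $\xb^M\otimes e_l$ span $W(3,0)$, $\Omega^0(3)\otimes\mathfrak{sl}_2$, $d\Omega^1(3)$, $\Omega^2(3)^{-1/2}\otimes\C^2$ and $\Omega^0(3)^{-1/2}\otimes\C^2$ respectively, hence all of $E(3,8)$.

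Third --- and this is where the real work lies --- I would prove injectivity. As the authors note in Section~\ref{section7}, for $E(5,10)$ and $E(3,8)$ the Fourier coefficients of the $a_i(\zb),b_i(\zb),\dots$ are \emph{not} linearly independent, so $\ker\Phi=0$ cannot be read off the construction and must be established by a dimension comparison. Concretely, I would prove the graded inequality
\[
\dim\mathfrak g(RE(3,8))_d\le\dim E(3,8)_d\qquad\text{for all }d\in\Z,
\]
which, together with the surjectivity already obtained, forces $\Phi$ to be a graded isomorphism degree by degree. The right-hand side is read off from the known decomposition of each $E(3,8)_d$ into irreducible $\mathfrak g_0\cong\mathfrak{sl}_3\oplus\mathfrak{sl}_2\oplus\C$-modules, exactly as $\dim E(5,10)_d$ was used in Corollary~\ref{eqdim}.

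For the left-hand side, $\mathfrak g(RE(3,8))_d$ is spanned by the monomials $a_i\yb^M$, $c\yb^M$, $b_k\yb^M$ in even degree and by $e_l\yb^M$, $d_{jkl}\yb^M$ in odd degree, of the appropriate length $\ell(M)$, and I would exhibit explicit spanning subsets modulo the two relations, in the spirit of Proposition~\ref{dimE510}. The $a_i$, $c$ and $e_l$ parts are free (each $\yb^M$ of the relevant length times the respective $3$, $3$, $2$ generators); the $d_{jkl}$ part is cut down to $h<k$ by $d_{hkl}=-d_{khl}$; and the $b_k$ part is reduced by the same lowest-index elimination used for $E(5,10)$, namely if no variable $y_j$ with $j>k$ divides $\yb^M$ then $b_k\yb^M$ is rewritten via $\sum_k b_k\de_{y_k}\yb^{M'}=0$ in terms of $b_j\yb^{M'}$ with smaller index. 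Counting these spanning sets and matching them against $\dim E(3,8)_d$ is the main obstacle: it is purely combinatorial but delicate bookkeeping, and it is genuinely harder than for $E(5,10)$ because several generator types --- here $a_i$, $c$, and the \emph{positive}-degree $b_k$ --- contribute simultaneously to each even graded component, so the various spanning counts must be assembled carefully before the comparison closes.
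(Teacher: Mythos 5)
Your proposal is correct, and its skeleton coincides with the paper's: well-definedness, the homomorphism property, surjectivity, and injectivity via a graded dimension count modeled on the $E(5,10)$ case (Proposition \ref{dimE510} and Corollary \ref{eqdim}). The genuine difference is where you put the weight on the homomorphism step. The paper's proof consists almost entirely of the direct verification of fifteen paired bracket identities, relation $(n)$ computed in $\mathfrak g(RE(3,8))$ against relation $(n\mathrm{b})$ computed in $E(3,8)$, and this explicit list is what certifies that the map \emph{with its specific signs and normalizations} intertwines the brackets. You instead derive the homomorphism property conceptually: since $RE(3,8)$ was built by formal Fourier transform from pairwise local distributions with coefficients in $\tilde{\mathfrak g}$, coefficient extraction is automatically a homomorphism onto the span of the non-negative coefficients, i.e.\ onto $E(3,8)$. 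This is the cleaner explanation of \emph{why} the fifteen checks must succeed, but to turn it into a complete proof of the theorem as stated you still must verify that the displayed map coincides with coefficient extraction; this is not free, because the sign normalizations are not uniform (compare $a_i\yb^M\mapsto -\xb^M\de_{x_i}$ in the $E(3,6)$ theorem with $a_i\yb^M\mapsto \xb^M\de_{x_i}$ here, and a naive residue computation for $b_k\yb^M$ produces the image up to a sign). The paper's direct computations absorb exactly this bookkeeping, which is why you should keep your "alternative" direct verification available. On the other two steps you are more explicit than the paper: the observation $\Phi(b_k\yb^M)=-d(\xb^M\,dx_k)$, with the relation $\sum_k\de_k b_k=0$ going over to $d\circ d=0$, is a nice way to see simultaneously well-definedness and why the $b$'s map onto $d\Omega^1(3)$; and you correctly identify that the paper's phrase "similar to the case of $E(5,10)$" hides the real work, namely the spanning-set count against $\dim E(3,8)_d$, which is more delicate here because $a_i$, $c$ and the positive-degree $b_k$ feed the same even graded components. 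Neither you nor the paper actually executes that count, so on this point you match, rather than fall short of, the paper's level of detail.
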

\begin{proof}
It is immediate to check that the map is well-defined. The proof that it is a linear isomorphism is similar to the case of $E(5,10)$. The fact that it is a Lie superalgebra homomorphism
 follows from the following relations which can be verified by direct computations:
 
 \medskip
\begin{enumerate}
\item $[a_i\yb^M,a_j\yb^N]=n_ia_j \yb^{M+N-e_i}-m_ja_i \yb^{M+N-e_j}-\frac{1}{2}\sum_k b_k (\de_{y_i}\de_{y_k}\yb^M)(\de_{y_j}\yb^N)$
\item[(1b)]$[\xb^M \de_{x_i},\xb^N \de_{x_j}]=n_i \xb^{{M+N-e_i}}\de_{x_j}-m_j \xb^{M+N-e_j}\de_{x_i}+\frac{1}{2}\sum_{k,r} (\de_{x_i}\de_{x_k}\xb^M)(\de_{x_r}\de_{x_j}\xb^N)d_{rk}$
\medskip
\item $[a_i\yb^M, c \yb ^N]=n_ic \yb^{M+N-e_i}$
\item[(2b)]$[\xb^M \de_{x_i},\xb^N c]=n_i\xb^{M+N-e_i}c$
\bigskip
\item $[a_i\yb^M, b_k \yb^N]=n_ib_k\yb^{M+N-e_i}+\delta_{ki}\sum_r m_r b_r \yb^{M+N-e_r}$
\item[(3b)]$[\xb^M \de_{x_i},-\sum_{r\neq k}n_r\xb^{N-e_r}d_{rk}]
=-n_i\sum_r (\de_{x_r}\xb^{M+N-e_i})d_{rk}-\delta_{ki} \sum_{r,s} m_r (\de_{x_s}\xb^{M+N-e_r})d_{sr}$\bigskip
\item $[a_i\yb^M, e_j \yb^N ]=(n_i-\frac{1}{2}m_i)e_j \yb^{M+N-e_i}+\frac{1}{2}\sum_{k,r}d_{krj}(\de_{y_k}\de_{y_i}\yb^M)(\de_{y_r}\yb^N)$
\item[(4b)]$[\xb^M \de_{x_i},\xb^N e_j]=((n_i-\frac{1}{2}m_i)\xb^{M+N-e_i}+
\frac{1}{2}\sum_{h,k}\de_{x_h}\de_{x_i}(\xb^M)\de_{x_k}(\xb^N)d_{hk})e_j$\bigskip
\item $[a_i\yb^M, d_{hkl} \yb^N]=(n_i-\frac{1}{2}m_i)d_{hkl}\yb^{M+N-e_i}+\delta_{ih}\sum_j m_j d_{jkl}\yb^{M+N-e_j}-\delta_{ik} \sum_r m_r d_{rhl} \yb^{M+N-e_r}$
\item[(5b)] $[\xb^M \de_{x_i},\xb^N d_{hk}\otimes e_l]=
(n_i-\frac{1}{2}m_i)\xb^{M+N-e_i}d_{hkl}+\delta_{ih}\sum_jm_j\xb^{M+N-e_j}d_{jkl}
-\delta_{ik}\sum_rm_r\xb^{M+N-e_r}d_{rhl}$\bigskip
\item $[c \yb^M, c' \yb^N]=[c,c']\yb^{M+N}+\tr(cc')\sum_k m_k b_k \yb^{M+N-e_k}$ 
\item[(6b)] $[\xb^M c, \xb^N c']=\xb^{M+N}[c,c']+
\tr(cc')\sum_{k,r}m_kn_r\xb^{M+N-e_k-e_r}d_{kr}$\bigskip
\item $[c \yb^M, b_k \yb^N]=0$
\item [(7b)]$[\xb^M c, -\sum_{r\neq k}\xb^{N-e_r}d_{rk}]=0$;\bigskip
\item $[c \yb^M, e_j \yb^N]=ce_j \yb^{M+N}+\sum_{i,h}m_i n_h d_{ih}\otimes ce_j \yb^{M+N-e_i-e_h}$
\item [(8b)]$[\xb^M c, \xb^N e_j]=(\xb^{M+N}+\sum_{k,r}m_kn_r\xb^{M+N-e_k-e_r}d_{kr})\otimes ce_j$\bigskip
\item $[c \yb^M, d_{jkl} \yb^N]=[c,d_{jkl}]\yb^{M+N}$
\item [(9b)]$[\xb^M c, \xb^N d_{jk}\otimes e_l]=\xb^{M+N}d_{jk}\otimes ce_l$\bigskip

\item $[b_k\yb^M ,  b_h \yb ^N ]=0$
\item [(10b)] $[-\sum_{r\neq k}m_r\xb^{M-e_r}d_{rk},   -\sum_{s\neq h}n_s\xb^{N-e_s}d_{rh}]=0$\bigskip
\item $[b_k\yb^M ,   e_j \yb^N]=-\sum_i m_i d_{ikj} \yb^{M+N-e_i}$
\item[(11b)]$[-\sum_{r\neq k}m_r\xb^{M-e_r}d_{rk},\xb^N e_j]=
\sum_{r\neq k}m_r\xb^{M+N-e_r}d_{krj}$\bigskip
\item $[b_k\yb^M ,   d_{jkl} \yb^N]=0$
\item[(12b)] $[-\sum_{r\neq k}m_r\xb^{M-e_r}d_{rk},\xb^N d_{jk}\otimes e_l]=0$\bigskip
\item $[e_i\yb^M,e_j \yb^N]=(j-i)\sum_{k,r}\varepsilon_{kr} m_k n_r a_{t_{kr}}\yb^{M+N-e_k-e_r}$
\item[(13b)]
$[\xb^Me_i,\xb^N e_j]=(j-i) \sum_{k,r}\varepsilon_{kr} m_k n_r \xb^{M+N-e_k-e_r}\de_{t_{kr}}$
\bigskip
\item  $[e_i\yb^M,d_{jkh}\yb^N] = \varepsilon_{jk}\Big((h-i)\big(a_{t_{jk}}\yb^{M+N}+\sum_r b_r (\de_{y_r}\yb^M)(\de_{y_{t_{jk}}}\yb^N)\big)$\\
${}$\hspace{3cm}$+\frac{1}{2}(m_{t_{jk}}-n_{t_{jk}}) \yb^{M+N-e_{t_{jk}}}e_i\cdot e_h  \Big)$
\item[(14b)] $[\xb^Me_i,\xb^N d_{jk}\otimes e_h]=\varepsilon_{jk}\Big(
(h-i)(\xb^{M+N}\de_{t_{jk}}+\sum_{r,s}m_r\xb^{M-e_r}
\de_{x_s}\de_{x_{t_{jk}}}(\xb^N)d_{rs})$\\
${}$\hspace{3.5cm}$+\frac{1}{2}(m_{t_{jk}}-n_{t_{jk}}) \xb^{M+N-e_{t_{jk}}}e_i\cdot e_h\Big)$\bigskip
\item $[d_{jkh}\yb^M,d_{jsl}\yb^N]=\varepsilon_{jk}(l-h)(1-\delta_{ks})b_j\yb^{M+N} $
\item[(15b)]$[\xb^M d_{jk}\otimes e_h, \xb^N d_{js}\otimes e_l]=
(l-h)(\varepsilon_{jk}((n_{t_{jk}}-\frac{1}{2}m_{t_{jk}})\xb^{M+N-e_{t_{jk}}}d_{js}
-\delta_{t_{jk} s}\sum_{r}m_r\xb^{M+N-e_r}d_{rj})$\\
${}$\hspace{4cm}$-\varepsilon_{js}n_{t_{js}}\xb^{N+M-e_{t_{js}}}d_{jk})$.
\end{enumerate}
\end{proof}
\begin{remark} Let $R=RE(3,8)$. As in the case of $E(3,6)$, $E(3,8)_0\cong \mathfrak{sl}_3\oplus\mathfrak{sl}_2\oplus\C Y$ where we can choose 
$Y=\frac{2}{3}\sum_ix_i\de_i$ (see \cite{KR}). Note that $\ad Y$ acts on 
$\mathfrak g(R)_j$ as multiplication by $j/3$ and since $\dim \mathfrak g(R)_{-1}=6$,
$\dim \mathfrak g(R)_{-2}=3$ and $\dim \mathfrak g(R)_{-3}=2$, we have
$\str(\ad Y_{|\mathfrak g(R)_{<0}})=2$. This is consistent with the results of \cite{KR} on degenerate $E(3,8)$-modules.
\end{remark}

\end{document}